\newtheorem{thm}{Theorem}[section]
\newtheorem{lemma}[thm]{Lemma}
\newtheorem{prp}[thm]{Proposition}
\theoremstyle{definition}
\newtheorem{definition}[thm]{Definition}
\theoremstyle{remark}
\numberwithin{equation}{section}
\newcommand{\R}{\mathbb R}
\newcommand{\RR}{\mathbb{R}}
\newcommand{\ol}{\overline}
\newcommand{\Om}{\Omega}
\newcommand{\e}{\epsilon}
\newcommand{\la}{\lambda}
\newcommand{\al}{\alpha}
\begin{document}

    \title[Double phase Robin problem]{Fractional double phase Robin  problem involving variable order-exponents and Logarithm-type nonlinearity}
    \author[R. Biswas]{Reshmi Biswas}
    \author[A. Bahrouni]{Anouar Bahrouni}
    \author[A. Fiscella]{Alessio Fiscella}
    \address[R. Biswas]{Mathematics Department, Indian Institute of technology Delhi, Hauz Khas, Delhi 110016, India}
    \address[A. Bahrouni]{Mathematics Department,  Faculty of Sciences, University of Monastir, 5019 Monastir, Tunisia}
    \address[A. Fiscella]{Dipartimento di Matematica e Applicazioni, Universit\`a degli Studi di Milano-Bicocca, Via Cozzi 55, Milano 20125, Italy}
    \email[R. Biswas]{reshmi15.biswas@gmail.com}
    \email[A. Bahrouni]{bahrounianouar@yahoo.fr}
    \email[A. Fiscella]{alessio.fiscella@unimib.it}

    \keywords {Variable-order fractional $p(\cdot)$-Laplacian, Double phase problem, Logarithmic nonlinearity, variational methods\\
        \hspace*{.3cm} {\it 2010 Mathematics Subject Classifications}:
        35R11, 35S15, 47G20, 47J30.}

    \begin{abstract}
        The paper deals with the  logarithmic fractional equations with variable exponents
\begin{equation*}
    \left\{
\begin{aligned}
                &(-\Delta)^{s_1(\cdot)}_{p_1(\cdot)}(u)+(-\Delta)^{s_2(\cdot)}_{p_2(\cdot)}(u)
                +|u|^{\overline{p}_1(x)-2}u+|u|^{\overline{p}_2(x)-2}u
                =\la b(x)|u|^{\al(x)-2}u\\
                &\qquad\qquad\qquad\qquad\qquad\qquad\qquad\qquad\quad\quad\,\,\,\,\,+ \mu a(x)|u|^{r(x)-2}u\log|u|+\mu
                c(x)|u|^{\eta(x)-2}u ,&&x\in \Om, \\
                &\mathcal{N}^{s_1(\cdot)}_{p_1(\cdot)}(u)+\mathcal{N}^{s_2(\cdot)}_{p_2(\cdot)}(u)
                +\beta(x)(|u|^{\overline{p}_1(x)-2}u+|u|^{\overline{p}_2(x)-2}u)
                =0, &&x\in
                \R^N\setminus\overline{\Omega},
\end{aligned}
\right.
\end{equation*}

        where  $(-\Delta)^{s_{i}(\cdot)}_{p_{i}(\cdot)}$ and $\mathcal{N}^{s_i(\cdot)}_{p_i(\cdot)}$ denote the variable
        $s_{i}(\cdot)$-order $p_{i}(\cdot)$-fractional Laplace operator and the nonlocal normal $p_{i}(\cdot)$-derivative of $s_{i}(\cdot)$-order, respectively,
        with $s_{i}(\cdot):\mathbb R^{2N}\to(0,1)$ and $p_{i}(\cdot):\mathbb
        R^{2N}\to(1,\infty)$ ($i\in \{1,2\}$) being continuous. Here
        $\Omega\subset\mathbb{R}^{N}$ is a bounded smooth domain with
        $N>p_{i}(x,y)s_{i}(x,y)$ ($i\in \{1,2\}$) for any
        $(x,y)\in\overline{\Omega}\times\overline{\Omega}$, $\lambda$ and $\mu$
        are a positive parameters, $r(\cdot)$ and $\eta(\cdot)$ are two
        continuous  functions,
        while variable exponent $\alpha(x)$ can be close to the critical exponent
        $p^{*}_{2s_{2}}(x)=N\overline{p}_{2}(x)/(N-\overline{s}_{2}(x)\overline{p}_{2}(x))$,
        given with $\overline{p}_{2}(x)=p_{2}(x,x)$ and $\overline{s}_{2}(x)=s_{2}(x,x)$ for $x\in\overline{\Omega}$.
        Precisely, we consider two cases. In the first case, we deal with subcritical nonlinearity, that is,
        $\alpha(x)<p^{*}_{2_{s_{2}}}(x)$, for any $x\in \overline{\Omega}$.
        In the second case, we study the critical exponent, namely,
        $\alpha(x)=p^{*}_{2_{s_{2}}}(x)$
        for some $x\in \overline{\Omega}$. Then, using variational methods, we prove the
        existence and multiplicity of solutions and existence of ground state solutions to the above problem.
    \end{abstract}

    \maketitle
    \tableofcontents

    \section{Introduction}

    In this paper, we study the existence and the multiplicity of solutions for fractional $p_1(\cdot)\& p_2(\cdot)$-Laplacian problems with the nonlocal Robin boundary condition and involving a logarithmic type nonlinearities. More precisely, we first consider the following problem
\begin{equation}\label{eq1}
\left\{
\begin{aligned}
            &\mathcal{L}_{p_1,p_2}^{s_1,s_2} (u)
            +|u|^{\overline{p}_1(x)-2}u+|u|^{\overline{p}_2(x)-2}u
            =\la b(x)|u|^{\al(x)-2}u+  a(x)|u|^{r(x)-2}u\log|u|,
            &&x\in \Om, \\
            &\mathcal{N}^{s_1,s_2}_{p_1,p_2}(u)+\beta(x)(|u|^{\overline{p}_1(x)-2}u+|u|^{\overline{p}_2(x)-2}u)
            =0,&& x\in\R^N\setminus\overline{\Omega},
\end{aligned}
\right.
\end{equation}
    where $\Omega\subset\R^N$ is a bounded smooth domain with dimension $N\geq2$, $\la$ is a positive parameter, $a$, $b$ and $\beta$ are suitable non-negative functions, $s_1$, $s_2$, $p_1$, $p_2$ are uniformly continuous functions such that $\ol{p}_i(x):=p_i(x,x)$ and $\ol{s}_i(x):=s_i(x,x)$, for $i\in\{1,2\}$, with appropriate assumptions described later. The main operator in \eqref{eq1} is given by
    $$
    \mathcal{L}_{p_1,p_2}^{s_1,s_2} (u):=(-\Delta)^{s_1(\cdot)}_{p_1(\cdot)}(u)+(-\Delta)^{s_2(\cdot)}_{p_2(\cdot)}(u),
    $$
    where for any continuous variable exponent $p_i$
the fractional $p_i(\cdot)$-Laplacian operator
$(-\Delta)^{s_i(\cdot)}_{p_i(\cdot)}$, with $0<s_i(\cdot)<1<p_i(\cdot)$, is defined as
$$
(-\Delta)^{s_i(\cdot)}_{p_i(\cdot)}u(x)=P.V.\int_{\mathbb{R}^{N}}\frac{|u(x)-u(y)|^{s_i(x,y)-2}(u(x)-u(y))}{|x-y|^{N+p_i(x,y)s_i(x,y)}}dy,
\quad x\in\mathbb{R}^{N},
$$
along any $u\in C_0^\infty(\mathbb R^N)$, where P.V. denotes
the Cauchy principle value. Furthermore, in \eqref{eq1} we have a double phase Neumann boundary condition with
    $$
    \mathcal{N}^{s_1,s_2}_{p_1,p_2}(u):=\mathcal{N}^{s_1(\cdot)}_{p_1(\cdot)}(u)+\mathcal{N}^{s_2(\cdot)}_{p_2(\cdot)}(u)
    $$
where $\mathcal{N}^{s_i(\cdot)}_{p_i(\cdot)}$ denotes the nonlocal normal $p_i(\cdot)$-derivative of $s_i(\cdot)$-order, set as
    \begin{align*}
        \mathcal{N}^{s_i(\cdot)}_{p_i(\cdot)}u(x)=\int_{\Omega}\frac{|u(x)-u(y)|^{p_i(x,y)-2}(u(x)-u(y))}{|x-y|^{N+s_i(x,y)p_i(x,y)}}\,dy \quad \text{for }x\in\R^N\setminus\overline{\Omega}.
    \end{align*}

    The nonlocal operators  are nowadays gaining popularity in applied sciences, in
    theoretical studies and also in real world applications. Precisely,
    fractional Sobolev spaces are well known since the beginning of the
    last century, especially in the framework of harmonic analysis. The
    starting point in the study of fractional problems is due to the
    pioneering papers of  of Caffarelli et al. \cite{11,12,13}. Based on
    this, several other works have been published in the nonlocal
    framework, see for instance, \cite{10,14,23,22,25}.  For a
    comprehensive introduction to the study of fractional equations and
    the use of variational methods in the treatment of these problems,
    we refer to the monograph by  Di Nezza, Palatucci and Valdinoci
    \cite{23}.\\

    In this paper, we study the variable-order fractional
    $p(\cdot)$-Laplace operator in combination with a logarithmic
    nonlinearity. To our best knowledge, Kaufmann et al. \cite{kaufmann}
    first introduced some results on fractional Sobolev spaces with
    variable exponent  and the corresponding
    fractional $p(\cdot)$-Laplacian for constant order $0<s<1$. There, the authors established
    compact embedding theorems of these spaces
    into variable exponent Lebesgue spaces.
    In \cite{6}, Bahrouni and R\u{a}dulescu  obtained some further qualitative properties of the fractional Sobolev spaces with variable exponents  and the fractional
    $p(\cdot)$-Laplacian. After that, by imposing variable growth on the order $s$, Biswas and Tiwari \cite{rs} introduced variable-order fractional $p(\cdot)$-Laplacian and
    corresponding variable order fractional Sobolev spaces with variable exponent.
    For further studies on this kind of
    problems with some different approaches, see \cite{5,rs-1,rs2,ky-ho}, where the authors studied the problems with homogeneous Dirichlet boundary datum, that is,   $u=0$ in $\RR^N\setminus\Om$. In \cite{BRW}, Bahrouni et al. introduced the notion of Robin boundary condition  in the variable exponent nonlocal framework.
    There are only two papers in the literature, as per as we are concerned, regarding   critical  nonlocal problems
    involving variable order, variable exponents.  We refer to the work by Ho and Kim \cite{Ho-Conc}, where the authors first introduced  a critical embedding and
    concentration-compactness principles for such fractional Sobolev spaces with variable exponents and constant order $0<s<1$. Later, Fiscella et al. \cite{zuo} extended these results regarding critical  nonlocal problems with variable exponent in the variable order set up. This paper further extends the works as in \cite{BRW,zuo}.
    \\

      The main operator in \eqref{eq1} is a generalized version of the constant $p_1\&p_2$-Laplacian which has a wide amount of applications in  plasma physics, bio-physics, reaction-diffusion and others, as shown in \cite{1.0, 1.1, 1.2}. When the exponents $p_i$, $s_i$ are constants, for $i\in\{1,2\}$,  we refer
     to the survey article \cite{21}, also to \cite{d3,PRR,SRRZ} for the local case, that is $s_i\to 1^-$, and \cite{amb,ambrosioR,d1,d2} for the nonlocal case.
     But  \eqref{eq1} has a more delicate structure, due to the non-homogeneity, as well as  the variable growth. For the local case with variable exponents, that is $s_i\to1^-$, we refer to the works in \cite{Bahrouni-Radulescu-Repovs, Radulescu}.  Among few recent works dealing with these problems in nonlocal set up, that is, variable-order fractional $p_1(\cdot)\&p_2(\cdot)$-Laplacian equations,  one can see the references {\cite{tong,zfb1} for the problem involving  homogeneous Dirichlet boundary datum and \cite{rsk} for the problem involving Robin boundary condition. \\

    On the other hand, the problems with the presence of logarithmic
    nonlinearity have been studied by several authors. In particular in
    \cite{tian}, Tian proved that the following problem
\begin{equation*}
\left\{
\begin{aligned}
&-\Delta u=a(x)u\log |u| && \quad \text{in} \quad    \Omega, \\
&u=0&& \quad \text{on} \quad  \partial \Omega,
\end{aligned}
\right.
    \end{equation*}
    has at least two nontrivial solutions provided that $a(\cdot)$
    changes sign on $\Omega$.  In \cite{d'avenia}, d'Avenia et al.
    considered the following fractional logarithmic Schr\"odinger
    equation
    \begin{equation*}
        (-\Delta)^{s} u+Wu=u \log |u|^{2},\qquad x\in \mathbb{R}^{N},
    \end{equation*}
    where $W:\mathbb R^N\to \RR^+$ is a continuous function . By employing the fractional logarithmic Sobolev
    inequality, the authors obtained the existence of infinitely many
    solutions.  Next,  in \cite{truong}, Truong studied the following
    problem fractional $p$-Laplacian equations with logarithmic
    nonlinearity
    \begin{equation*}
        (-\Delta)_{p}^{s} u+V(x)|u|^{p-2}=\lambda l(x) |u|^{p-2}u \log |u|,\qquad
        x\in \mathbb{R}^{N},
    \end{equation*}
    where $l(\cdot):\mathbb R^N\to \RR$ is a sign-changing weight function. Using the Nehari manifold
    approach, the author proved the existence of at least two nontrivial
    solutions. In this article, we generalizes all these  aforementioned works involving logarithmic nonlinearity, in the variable order-exponent framework with the nonhomogeneous double phase operator. Hence, it calls for carrying out more delicate analysis as compared to that in the previous mentioned literature.  \\

    Inspired by all the above works, for the first time in literature we study fractional Laplacian problems  with variable-order and variable-exponent, involving logarithmic nonlinearities, such as \eqref{eq1}.
    For this, we fix some notations as follows.
    For any set $\mathcal{D}$ and any function $\Phi:\mathcal{D}\rightarrow\mathbb R$, we denote
    {\begin{align*}
            \Phi^{-}:=\inf_{\mathcal{D}} \Phi(x)\quad\text{and}\quad \Phi^{+}:=\sup_{ \mathcal{D}}\Phi(x).
    \end{align*}}
    \noindent We  define the function space
    $$C_+(\mathcal{D}):=\left\{\Phi:\mathcal{D}\to \R \text {~is uniformly continuous~} \big|~ 1 <\Phi^{-}\leq \Phi^{+}<\infty\right\}.$$
    We consider the following standard hypotheses throughout this paper:
    \begin{itemize}
        \item[{$(H_1)$:}] \textit{$s_i:\mathbb R^{2N}\rightarrow(0,1)$ are uniformly continuous and symmetric functions, i.e., $s_i(x,y)=s_i(y,x)$ for all $(x,y)\in \mathbb R^{2N}$ and for all $i\in \{1,2\}$,
        with  $0<s_1^-\leq s_1^+<s_2^-\leq s_2^+<1.$}
    \end{itemize}
    \begin{itemize}
        \item[{$(H_2)$:}] \textit{$p_i\in C_+(\mathbb R^{2N})$ are symmetric functions, i.e., $p_i(x,y)=p_i(y,x)$ for all $(x,y)\in \mathbb R^{2N}$ and for all $i\in \{1,2\}$, with  $1<p_1^-\leq p_1^+<p_2^-\leq p_2^+<\infty$ and such that $s_2^+p_2^+<N$.}
    \end{itemize}
    \begin{itemize}
        \item[{$(H_3)$:}] \textit{$\beta:\mathbb R^N\setminus\Omega\to\mathbb R$ is a non-negative function and $\beta\in L^\infty(\mathbb R^N\setminus\Omega)$.}
    \end{itemize}
    \begin{itemize} \item[$(H_4)$:]  \textit{$a(\cdot):\overline\Om\to\mathbb R $ is a non-negative function and  $a \in L^{\infty}(\Om)$.}
    \end{itemize}
    \begin{itemize}
        \item[$(H_5)$:] \textit{$b(\cdot):\overline\Om\to\mathbb R $ is a positive function and $b\in L^{g(\cdot)}(\Om),$ where $g\in C_+(\ol\Om)$ and $g(x)=\frac{r(x)}{r(x)-\al(x)}$.}
    \end{itemize}

    For studying  problem \eqref{eq1}, we need also the following information about the subcritical exponents $\alpha(\cdot)$ and $r(\cdot)$:
    \begin{itemize}
        \item[$(H_6)$:] \textit{$\al(\cdot)$, $r(\cdot)\in C_+(\Om)$ such that $$1<\al^-\leq\al^+<p_1^-< 2p_2^+<r^-\leq r^+<p_{2_{s_2}}^*(x):=\frac{N\ol p_2(x)}{N-\ol s_2(x)\ol p_2}$$
        for all $x\in\Omega$, where $\ol p_2(x)=p_2(x,x)$ and $\ol s_2(x)=s_2(x,x)$.
        Moreover, we assume that there exist $x_0\in \Omega$ and
        $R_{0}>0$ such that
        $$r(x)=r^{+}, \quad \mbox{for all }x\in B^{c}(x_0,R_0).$$}
    \end{itemize}

    Thus, we are ready to
    introduce our first two main results.
    \begin{thm}\label{mainthm}
        Let $\Om$ be a smooth bounded domain in $\RR^N$ and let  $(H_1)$-$(H_6)$ hold.
        Then there exists a constant $\Lambda:=\Lambda(s_i,p_i,\Om,N,\al,\beta,r)>0$ such that for any $\la<\Lambda$ problem \eqref{eq1} achieves at least two distinct nontrivial weak solutions.
    \end{thm}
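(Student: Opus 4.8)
The conditions in $(H_6)$ --- with $\al^+<p_1^-$ making the weight term $\la b(x)|u|^{\al(x)-2}u$ play a ``concave'' role near the origin, and $2p_2^+<r^-$ making the logarithmic term play a ``convex'' role at infinity --- suggest treating \eqref{eq1} as a concave--convex problem in the spirit of Ambrosetti--Brezis--Cerami: I would obtain one solution as a local minimizer of the energy with strictly negative critical value, and a second one via the mountain pass theorem. Concretely, let $W$ denote the (reflexive, uniformly convex) variable-order fractional Robin space naturally attached to \eqref{eq1}, endowed with the norm built from the two Gagliardo-type seminorms of orders $s_1,s_2$ and exponents $p_1,p_2$ together with the $L^{\overline p_i(\cdot)}(\Om)$ and $\beta$-weighted exterior contributions, and set
\begin{equation*}
J_\la(u)=\Phi(u)-\la\int_\Om\frac{b(x)}{\al(x)}\,|u|^{\al(x)}\,dx-\int_\Om a(x)\left(\frac{|u|^{r(x)}}{r(x)}\log|u|-\frac{|u|^{r(x)}}{r(x)^2}\right)dx,
\end{equation*}
where $\Phi$ gathers the modular terms associated with $\mathcal L^{s_1,s_2}_{p_1,p_2}$, with $|u|^{\overline p_i(x)-2}u$ and with the boundary operator $\mathcal N^{s_1,s_2}_{p_1,p_2}$, so that critical points of $J_\la$ are precisely the weak solutions of \eqref{eq1}. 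Using $(H_4)$--$(H_6)$, the elementary inequality $|\log t|\le C_\ve\big(t^{-\ve}+t^{\ve}\big)$, and the fact that $r^++\ve<p^{*}_{2_{s_2}}(x)$ for $\ve$ small, the logarithmic nonlinearity is of subcritical growth; hence the compact embeddings of $W$ into $L^{\al(\cdot)}(\Om)$ and $L^{r(\cdot)\pm\ve}(\Om)$ show that $J_\la\in C^1(W)$ with a lower-order part whose derivative is compact, while $\Phi$ is convex, of class $C^1$, and its derivative $\Phi'$ enjoys the $(S_+)$-property.

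For the geometry of $J_\la$: since $\al^+<p_1^-$, along any fixed $v\in W$ with $v\ge0$, $v\not\equiv0$, the concave term dominates for small $t>0$, so $J_\la(tv)<0$, hence $\inf_{\overline B_\rho(0)}J_\la<0=J_\la(0)$ for small $\rho>0$. On the sphere $\|u\|_W=\rho$ the modular part of $\Phi$ dominates and yields $J_\la(u)\ge c_1\rho^{\theta_1}-c_2\rho^{\theta_3}-\la c_3\rho^{\theta_2}$ with $\theta_2<\theta_1<\theta_3$ (here $\theta_2$ comes from $\al$, $\theta_1$ from $p_1^-$ or $p_2^+$, and $\theta_3$ from $r^-$); choosing first $\rho>0$ so small that $c_1\rho^{\theta_1}>2c_2\rho^{\theta_3}$, and then $\Lam>0$ so small that $\la c_3\rho^{\theta_2}<\tfrac14 c_1\rho^{\theta_1}$ whenever $\la<\Lam$, we get $\inf_{\|u\|_W=\rho}J_\la=:\delta>0$. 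Finally, taking a test function $v$ supported in a region where $r\equiv r^+$ (available by $(H_6)$), the term $-\int_\Om a(x)\frac{|u|^{r(x)}}{r(x)}\log|u|\,dx$ together with $r^+>2p_2^+>p_2^+$ forces $J_\la(tv)\to-\infty$ as $t\to+\infty$, so there is $e\in W$ with $\|e\|_W>\rho$ and $J_\la(e)<0$.

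Next I would check that $J_\la$ satisfies the Cerami condition at every level. For a Cerami sequence $(u_n)$, testing with $u_n$ and combining $J_\la(u_n)$ with a suitable multiple of $\langle J_\la'(u_n),u_n\rangle$ produces, on the left, $c+o(1)\|u_n\|_W$ and, on the right, a positive multiple of $\Phi(u_n)$ --- this is where $2p_2^+<r^-$ enters, to dominate both the homogeneity of $\Phi$ and the slow but genuine growth contributed by the logarithm in $\langle J_\la'(u_n),u_n\rangle$ --- minus terms controlled by $\ve\Phi(u_n)+C_\ve$ via Young's inequality and $\al^+<p_1^-$; since $p_1^->1$, $(u_n)$ is bounded in $W$. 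The compact embeddings then make all lower-order terms converge strongly, and the $(S_+)$-property of $\Phi'$ (equivalently of $\mathcal L^{s_1,s_2}_{p_1,p_2}$ plus the monotonicity of the $\overline p_i$- and $\beta$-terms) upgrades weak to strong convergence. With these ingredients the proof concludes in two steps. First, $J_\la$ is weakly lower semicontinuous (convexity of $\Phi$ plus weak continuity of the lower-order terms) and $\overline B_\rho(0)$ is weakly compact, so $J_\la$ attains its infimum there at some $u_1$; since this infimum is $<0=J_\la(0)\le\delta$, it is not attained on $\partial B_\rho$, whence $u_1\in B_\rho(0)$, $J_\la'(u_1)=0$ and $J_\la(u_1)<0$, so $u_1\neq 0$. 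Second, the mountain pass theorem (with the Cerami compactness condition, based at $0$, using $J_\la(0)=0<\delta\le\inf_{\|u\|_W=\rho}J_\la$ and $J_\la(e)<0$ with $\|e\|_W>\rho$) yields $u_2\in W$ with $J_\la'(u_2)=0$ and $J_\la(u_2)=c:=\inf_{\gamma\in\Gamma}\max_{[0,1]}J_\la\circ\gamma\ge\delta>0$, where $\Gamma=\{\gamma\in C([0,1],W):\gamma(0)=0,\ \gamma(1)=e\}$. Since $J_\la(u_2)=c>0>J_\la(u_1)$ and $J_\la(0)=0$, the solutions $u_1$ and $u_2$ are nontrivial and distinct, which proves the theorem.

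The main obstacle is the compactness step: the logarithmic nonlinearity is not homogeneous and destroys the classical Ambrosetti--Rabinowitz structure, so the boundedness of Cerami sequences has to be extracted from the strengthened gap $2p_2^+<r^-$ in $(H_6)$ together with a careful bookkeeping of modular versus norm quantities --- the estimates splitting according to whether $\|u_n\|_W$ and $|u_n|$ are larger or smaller than $1$ --- which is considerably more delicate than in the constant-exponent or non-logarithmic settings. A related technical difficulty, present throughout, is the uniform control of the log-weighted term $a(x)|u|^{r(x)}\log|u|$ and of its primitive, needed both for the $C^1$-regularity of $J_\la$ and for passing to the limit along Cerami sequences and in the minimization argument.
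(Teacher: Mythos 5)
Your proposal is correct and follows essentially the same route as the paper: the same mountain-pass geometry driven by $\al^-<p_1^-<2p_2^+<r^-$, the same elementary logarithm estimates (the analogue of Lemma \ref{log-in} and Lemma \ref{logm}) to get subcritical control and Palais--Smale compactness via the $(S_+)$-property, and the same two critical levels, one negative and one positive, to separate the solutions. The only cosmetic difference is that you obtain the negative-energy solution by direct minimization of the weakly lower semicontinuous functional on the closed ball $\overline B_\rho(0)$, whereas the paper runs Ekeland's variational principle there; both arguments are standard and interchangeable in this setting.
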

The proof of Theorem \ref{mainthm} is based on a combination of the mountain pass theorem and a minimization argument. In this direction, we strongly need $(H_6)$ to handle the logarithmic nonlinearity. In order to get the geometry of the functional related to \eqref{eq1}, we have to be aware that the logarithmic term can assume negative value. Indeed, the dominant exponent in \eqref{eq1} is $r(\cdot)$, as stated in $(H_6)$, which also involves the logarithmic nonlinearity.

        After Theorem \ref{mainthm}, we look for ground state solutions for \eqref{eq1}. For this, we need a new structural assumption for $a(\cdot)$:
   \begin{itemize} \item[$(H_{4}^{'})$:]  \textit{$a(\cdot):\overline\Om\to\mathbb R $ is a non-positive function and  $a \in L^{\infty}(\Om)$.}
    \end{itemize}
        Then, by the new geometry of \eqref{eq1} we are able to state the following existence result.
    \begin{thm}\label{mainthm-2}
        Let $\Om$ be a bounded domain in $\RR^N$ and let  $(H_1)$-$(H_3)$, $(H_{4}^{'})$ and $(H_5)-(H_6)$
        hold.
        Then, for any $\la>0$ problem \eqref{eq1} achieves a ground state
        solution.
    \end{thm}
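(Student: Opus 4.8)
The plan is to obtain a ground state solution by minimizing the energy functional $\mathcal{I}_\lambda$ associated to \eqref{eq1} over the Nehari manifold
$$
\mathcal{N}_\lambda:=\bigl\{u\in X\setminus\{0\}:\ \langle\mathcal{I}_\lambda'(u),u\rangle=0\bigr\},
$$
where $X$ denotes the solution space built from the variable-order fractional Sobolev spaces introduced earlier, with the Robin boundary contribution incorporated via the weight $\beta(\cdot)$. The key observation, which is exactly why $(H_4')$ is imposed, is that with $a(\cdot)\le 0$ the logarithmic term $-\int_\Omega a(x)|u|^{r(x)}\log|u|\,dx$ contributes with a favorable sign for large $|u|$ while being controlled near $|u|=0$; together with the subcritical Sobolev embeddings (recall $(H_6)$ gives $\alpha^+<r^-\le r^+<p^*_{2_{s_2}}(x)$ and $\alpha^+<p_1^-$), this will make $\mathcal{I}_\lambda$ coercive and bounded below on $\mathcal{N}_\lambda$ for every $\lambda>0$, removing the smallness restriction on $\lambda$ present in Theorem \ref{mainthm}.

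First I would set up the functional-analytic framework: verify that $\mathcal{I}_\lambda\in C^1(X,\mathbb{R})$, using the embedding $X\hookrightarrow L^{\theta(\cdot)}(\Omega)$ for the relevant exponents and the elementary estimate $|t|^{\tau}|\log|t||\le C(\tau,\delta)(|t|^{\tau-\delta}+|t|^{\tau+\delta})$ to handle the logarithmic nonlinearity with a small $\delta>0$ chosen so that all exponents stay subcritical — this is where $(H_6)$ is used crucially. Next I would carry out the standard Nehari analysis: show that for each $u\in X\setminus\{0\}$ the fibering map $t\mapsto\mathcal{I}_\lambda(tu)$ has a unique critical point $t_u>0$ which is a strict global maximum, so $\mathcal{N}_\lambda$ is nonempty and every $u\in\mathcal{N}_\lambda$ satisfies $\mathcal{I}_\lambda(u)=\max_{t>0}\mathcal{I}_\lambda(tu)$; uniqueness of $t_u$ follows from the monotonicity properties of the various homogeneous terms under the ordering $\alpha^+<p_1^-<p_2^+<2p_2^+<r^-$ forced by $(H_6)$, which guarantees that the competing powers are genuinely separated. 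I would also check the uniform lower bound $\|u\|_X\ge\rho>0$ on $\mathcal{N}_\lambda$ and that $c_\lambda:=\inf_{\mathcal{N}_\lambda}\mathcal{I}_\lambda>0$.

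Then I would take a minimizing sequence $(u_k)\subset\mathcal{N}_\lambda$ for $c_\lambda$; coercivity of $\mathcal{I}_\lambda$ on $\mathcal{N}_\lambda$ (deduced by combining the Nehari identity with the energy expression so that the dominant $r(\cdot)$-term with its favorable logarithmic sign controls the norm) yields boundedness, hence up to a subsequence $u_k\rightharpoonup u$ in $X$ and $u_k\to u$ strongly in $L^{\theta(\cdot)}(\Omega)$ for all subcritical $\theta(\cdot)$, and a.e. in $\mathbb{R}^N$. Using the $(S_+)$-property of the operator $\mathcal{L}_{p_1,p_2}^{s_1,s_2}$ together with the lower boundedness, I would upgrade this to strong convergence $u_k\to u$ in $X$, pass to the limit in the Nehari constraint to get $u\in\mathcal{N}_\lambda$ (the lower bound $\rho$ ensures $u\ne 0$), and conclude $\mathcal{I}_\lambda(u)=c_\lambda$. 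A Lagrange-multiplier / fibering argument then shows the Nehari minimizer is a genuine weak solution, i.e. $\mathcal{I}_\lambda'(u)=0$, hence a ground state. The main obstacle I expect is the strong convergence step: controlling the logarithmic term along the minimizing sequence requires a careful application of the generalized dominated convergence theorem with the $\delta$-perturbed exponents staying below $p^*_{2_{s_2}}(\cdot)$ uniformly, and one must be attentive to the boundary Robin integrals over $\mathbb{R}^N\setminus\overline\Omega$, which are handled via the Gagliardo-type seminorm on the full space rather than a classical trace.
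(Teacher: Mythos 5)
Your overall strategy (minimization over the Nehari manifold) is genuinely different from the paper's, which simply observes that under $(H_4')$ the functional $J_\lambda$ is coercive on the whole of $X$ and satisfies the Palais--Smale condition, so that Ekeland's variational principle yields a global minimizer $\widetilde u_\lambda$ with $J_\lambda(\widetilde u_\lambda)=m^\ast=\inf_{X}J_\lambda$ and $J_\lambda'(\widetilde u_\lambda)=0$; nontriviality follows because $m^\ast\le J_\lambda(w)<0$ for a suitable $w$ constructed as in Lemma \ref{geo}$(iii)$. A Nehari approach could perhaps be made to work, but not in the form you describe: your fibering analysis is incorrect for this sign of $a(\cdot)$.

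Concretely, with $a\le 0$ the map $t\mapsto J_\lambda(tu)$ tends to $+\infty$ as $t\to\infty$ for every $u\ne 0$: the modular terms grow at least like $t^{p_1^-}$, the term $-\lambda\int_\Omega\frac{b(x)}{\alpha(x)}|tu|^{\alpha(x)}dx$ only subtracts $O(t^{\alpha^+})$ with $\alpha^+<p_1^-$, and the combination $-\int_\Omega\frac{a(x)}{r(x)}|tu|^{r(x)}\log|tu|\,dx+\int_\Omega\frac{a(x)}{r(x)^2}|tu|^{r(x)}dx$ is bounded below by a constant independent of $t$ (it is nonnegative where $|tu|\ge e^{1/r(x)}$ and pointwise bounded elsewhere by Lemma \ref{log-in}). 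Moreover $J_\lambda(tu)<0$ for small $t>0$, since the $\alpha(\cdot)$-term dominates near the origin. Hence the fibering maps are negative near $0$ and blow up at infinity: their critical points are interior \emph{minima}, not ``strict global maxima'', there is no reason for them to be unique, and $\inf_{\mathcal N_\lambda}J_\lambda$ is \emph{negative}, not positive as you assert. This undermines the steps that rely on $c_\lambda>0$ and on the max characterization (the uniform lower bound $\|u\|\ge\rho$ on $\mathcal N_\lambda$, and the natural-constraint/Lagrange-multiplier argument, which would need the fibering critical points to be nondegenerate). Since $J_\lambda$ is in fact coercive and bounded below on all of $X$, the detour through the Nehari manifold is unnecessary; the direct global minimization used in the paper is both simpler and avoids these pitfalls.
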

    In the second part of the paper we discuss the critical case. For this, we consider the following problem
    \begin{equation}\label{eq2}
  \left\{
    \begin{aligned}
            &\mathcal{L}_{p_1,p_2}^{s_1,s_2} (u)
            +|u|^{\overline{p}_1(x)-2}u+|u|^{\overline{p}_2(x)-2}u
            = b(x)|u|^{\al(x)-2}u+ \mu a(x)|u|^{r(x)-2}u\log|u|\\
            &\qquad\qquad\qquad\qquad\qquad\qquad\qquad\qquad\quad+\mu c(x)|u|^{\eta(x)-2}u,&&
            x\in \Om, \\
            &\mathcal{N}^{s_1,s_2}_{p_1,p_2}(u)+\beta(x)(|u|^{\overline{p}_1(x)-2}u+|u|^{\overline{p}_2(x)-2}u)
            =0,&& x\in\R^N\setminus\overline{\Omega},
\end{aligned}
\right.
\end{equation}
where $\mu>0$ is a positive parameter and the structural assumptions assumed for \eqref{eq1} still hold true. Besides $(H_1)$-$(H_5)$, in order to deal with a critical Sobolev situation as similarly done in \cite{zuo}, we need the following new hypotheses for $p_2(\cdot)$, $b(\cdot)$, $\alpha(\cdot)$ and $r(\cdot)$:
    \begin{itemize}
        \item[$(H_{5}^{'})$:] \textit{$b(\cdot):\overline\Om\to\mathbb R $ is a positive function and $b\in L^{\infty}(\Om)$.}
    \end{itemize}
    \begin{itemize} \item[$(H_{7})$:]  \textit{$p_2(\cdot)$ satisfies the following
            log-H\"{o}lder type continuity condition
            $$
            \inf\limits_{\varepsilon>0}\sup\limits_{(x,y)\in \Omega\times\Omega,
                ~0<|x-y|<1/2}\left|p_2(x,y)-p^{-}_{2,\Omega_{x,\varepsilon}\times\Omega_{y,\varepsilon}}\right|
            \log\frac{1}{|x-y|}<\infty,
            $$
            where $\Omega_{z,\varepsilon}=B_{\varepsilon}(z)\bigcap\Omega$ for
            $z\in\Omega$ and $\varepsilon>0$, and
            $p^{-}_{2,\Omega_{x,\varepsilon}\times\Omega_{y,\varepsilon}}=\inf\limits_{(x^{'},y^{'})\in\Omega_{x,\varepsilon}\times\Omega_{y,\varepsilon}}p_2(x^{'},y^{'})$.}
    \end{itemize}
    \begin{itemize} \item[$(H_{8})$:]
        \textit{$\alpha(\cdot)$, $r(\cdot):\overline{\Omega}\to\mathbb R$ are
        continuous functions verifying $1<r^{-}<r^{+}<\alpha
        ^{-}\leq \alpha^{+}$, and for any $x\in\overline{\Omega}$ there exists
            $\varepsilon=\varepsilon(x)>0$ such that
            $$
            \sup\limits_{y\in\Omega_{x,\varepsilon}}\alpha(y)\leq\frac{N\inf_{(y,z)\in\Omega_{x,\varepsilon}\times\Omega_{x,\varepsilon}}p_2(y,z)}{N-\inf_{(y,z)\in\Omega_{x,\varepsilon}\times\Omega_{x,\varepsilon}}s_2(y,z)\inf_{(y,z)\in\Omega_{x,\varepsilon}\times\Omega_{x,\varepsilon}}p_2(y,z)}
            $$
            with $\Omega_{x,\varepsilon}$ as defined in $(H_{7})$.}
    \end{itemize}
Assumption $(H_{8})$ somehow states the relation between $\al(x)$ and the critical exponent
            $p^*_{2s_2}(x)=N\overline{p}_2(x)/(N-\overline{s}_2(x)\overline{p}_2(x))$, where $\overline{p}_2(x)=p_2(x,x)$ and
            $\overline{s}_2(x)=s_2(x,x)$ for $x\in\overline{\Omega}$. Of course, $\al(x)\leq p^*_{2s_2}(x)$ but not necessarily $\al(x)=p^*_{2s_2}(x)$,
            for any $x\in\overline{\Omega}$. We obviously and tacitly assume along the paper that $\al(x)=p^*_{2s_2}(x)$ for some $x\in\overline{\Omega}$.

In order to control the logarithmic nonlinearity in \eqref{eq2}, with respect to \eqref{eq1} we need to add another nonlinearity with subcritical variable exponent $\eta(\cdot)$, assuming:
    \begin{itemize}
        \item[$(H_{9})$:]  $c(\cdot):\overline\Om\to\mathbb R $ is a non-negative function and  $c \in L^{\infty}(\Om)$.
    \end{itemize}
    \begin{itemize}
        \item[$(H_{10})$:] $\eta(\cdot)\in C_+(\Om)$ and there exists $\sigma_0>0$ such that $2p_{2}^{+}<\sigma_0<\eta^{-}\leq\eta^+<r
        ^-<\alpha^-$ and
        $$a(x)\leq e(r^--\eta^+)\min\left\{1, \frac{(\eta^{-}-\sigma_{0})r^{+}}{\eta^{-}(r^+-\sigma_{0})}\right\}c(x), \ \ \mbox{a.e} \ \ x\in \Omega.$$
    \end{itemize}
    Thus, our next result  for the critical case reads as follows.
    \begin{thm}\label{mainthm-3}
        Let $\Om$ be a bounded domain in $\RR^N$ and let  $(H_1)$-$(H_4)$, $(H_{5}^{'})$, $(H_{7})$-$(H_{10})$ hold. Then, there exists
        $\mu_{0}>0$ such that for any $\mu>\mu_0$ problem \eqref{eq2}
        admits at least one nontrivial weak solution $v_\mu$. Furthermore, we have
        \begin{equation}\label{asym}
        \lim_{\mu\to\infty}\|v_\mu\|=0.
        \end{equation}
    \end{thm}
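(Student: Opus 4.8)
The plan is to obtain the solution $v_\mu$ of \eqref{eq2} as a mountain pass critical point of the associated energy functional; the chief difficulty is the loss of compactness caused by the critical exponent $\alpha(\cdot)$, which I will handle via the concentration--compactness principle for variable order--exponent fractional Sobolev spaces. The smallness of $\|v_\mu\|$ will then follow by combining an estimate of the mountain pass level with a Cerami-type coercivity inequality, both of which rely crucially on hypothesis $(H_{10})$.

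\emph{Step 1: variational setting.} I work in the reflexive Musielak--Orlicz--Sobolev space $W$ associated with $\mathcal{L}^{s_1,s_2}_{p_1,p_2}$, the boundary operator $\mathcal{N}^{s_1,s_2}_{p_1,p_2}$ and the weight $\beta$, constructed as in \cite{zuo,Ho-Conc} and in the first part of the paper, with Luxemburg norm $\|\cdot\|$. Weak solutions of \eqref{eq2} are exactly the critical points of the $C^1$ functional
\[
\mathcal J_\mu(u)=\Phi(u)-\int_\Omega\frac{b(x)}{\alpha(x)}|u|^{\alpha(x)}\,dx
-\mu\int_\Omega a(x)\!\left(\frac{|u|^{r(x)}}{r(x)}\log|u|-\frac{|u|^{r(x)}}{r(x)^2}\right)dx
-\mu\int_\Omega\frac{c(x)}{\eta(x)}|u|^{\eta(x)}\,dx,
\]
where $\Phi$ collects the modular energies of the two fractional phases together with the $\overline p_1(\cdot),\overline p_2(\cdot)$ contributions in $\Omega$ and on $\mathbb R^N\setminus\overline\Omega$ (weighted by $\beta$). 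That $\mathcal J_\mu$ is well defined and $C^1$ uses the embeddings $W\hookrightarrow L^{q(\cdot)}(\Omega)$ — compact for subcritical $q(\cdot)$, continuous at the critical exponent — guaranteed by $(H_7)$--$(H_8)$, together with the fact that $|u|^{r(x)}|\log|u||\le C_\varepsilon|u|^{r(x)+\varepsilon}$ with $r^++\varepsilon<\alpha^-$ still subcritical, while $\eta(\cdot)$ is subcritical by $(H_8)$ and $(H_{10})$.

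\emph{Step 2: mountain pass geometry.} I verify that $\mathcal J_\mu$ has the mountain pass geometry for every $\mu>0$. Near the origin all perturbing terms are of strictly higher order than the leading modular growth rate (which is at most $p_2^+$), since $(H_{10})$ forces $p_2^+<2p_2^+<\sigma_0<\eta^-$ and $\eta^+<r^-<\alpha^-$, and the logarithm has a favourable sign on $\{|u|<1\}$; hence there are $\rho_\mu,\delta_\mu>0$ with $\mathcal J_\mu\ge\delta_\mu$ on $\{\|u\|=\rho_\mu\}$. Moreover, fixing any $w\in W\setminus\{0\}$, the critical power $\alpha^->p_2^+$ yields $\mathcal J_\mu(tw)\to-\infty$ as $t\to+\infty$, so there is $e_\mu$ with $\|e_\mu\|>\rho_\mu$ and $\mathcal J_\mu(e_\mu)<0$.

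\emph{Step 3: compactness below a threshold.} This is the core step and the main obstacle. First, every Cerami sequence $(u_n)$ at level $c$ is bounded: evaluating $\mathcal J_\mu(u_n)-\tfrac1{\sigma_0}\langle\mathcal J_\mu'(u_n),u_n\rangle$, the choice $2p_2^+<\sigma_0$ makes the $\Phi$-part coercive in the modular, the $\alpha(\cdot)$-part and, thanks to $\eta^->\sigma_0$, the $\eta(\cdot)$-part nonnegative, while the sign-changing logarithmic part is absorbed pointwise into the $\eta(\cdot)$-part by virtue of $(H_{10})$: here the elementary bound $-t^{\delta}\log t\le\tfrac1{e\delta}$ on $(0,1)$ with $\delta=r-\eta\ge r^--\eta^+$ is used, the constant $\tfrac{(\eta^--\sigma_0)r^+}{\eta^-(r^+-\sigma_0)}$ in $(H_{10})$ being precisely what balances the coefficients $\tfrac1{\sigma_0}-\tfrac1\eta$ and $\tfrac1{\sigma_0}-\tfrac1r$. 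This yields $\Psi(\|u_n\|)\le Cc+o(1)$ for a coercive $\Psi$ with $\Psi>0$ on $(0,\infty)$, $\Psi(0)=0$ and $C$ independent of $\mu$, hence boundedness. Next, passing to $u_n\rightharpoonup u$ in $W$, I apply the concentration--compactness principle for variable order, variable exponent fractional Sobolev spaces (\cite{zuo}, following \cite{Ho-Conc}) to $|u_n|^{\alpha(\cdot)}$ and to the fractional gradient of $u_n$; since $r(\cdot)$ and $\eta(\cdot)$ are subcritical the corresponding terms converge strongly and do not contribute to the concentration, so the standard cutoff test functions at each atom $x_i$ force, whenever the concentration set is nonempty, $c\ge c^*$ for an explicit $c^*=c^*(S,s_i,p_i,\Omega,N,\alpha,\beta)>0$ built from the best constant $S$ of the critical embedding. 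Hence for $c<c^*$ no concentration occurs, $u_n\to u$ in $L^{\alpha(\cdot)}(\Omega)$, and the $(S_+)$ property of $\Phi'$ upgrades this to $u_n\to u$ in $W$; thus $\mathcal J_\mu$ satisfies the Cerami condition $(C)_c$ for all $c<c^*$.

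\emph{Step 4: level estimate and conclusion.} It remains to place the mountain pass level $c_\mu=\inf_{\gamma}\max_{t\in[0,1]}\mathcal J_\mu(\gamma(t))$ below $c^*$. Taking $w$ as in Step~2, $(H_{10})$ makes the full $\mu$-bracket $G(t):=\int_\Omega a(x)(\tfrac{|tw|^{r(x)}}{r(x)}\log|tw|-\tfrac{|tw|^{r(x)}}{r(x)^2})\,dx+\int_\Omega\tfrac{c(x)}{\eta(x)}|tw|^{\eta(x)}\,dx$ bounded below uniformly in $\mu$ and strictly positive for $t$ bounded away from $0$, so that $t_\mu:=\arg\max_{t\ge0}\mathcal J_\mu(tw)\to0$ and $c_\mu\le\max_{t\ge0}\mathcal J_\mu(tw)\to0$ as $\mu\to\infty$; hence there is $\mu_0>0$ with $0<c_\mu<c^*$ for every $\mu>\mu_0$. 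By Step~3 the functional satisfies $(C)_{c_\mu}$, so the mountain pass theorem produces $v_\mu\in W$ with $\mathcal J_\mu'(v_\mu)=0$ and $\mathcal J_\mu(v_\mu)=c_\mu>0$; in particular $v_\mu$ is a nontrivial weak solution of \eqref{eq2}. Finally, since $v_\mu$ is a critical point, $\mathcal J_\mu(v_\mu)-\tfrac1{\sigma_0}\langle\mathcal J_\mu'(v_\mu),v_\mu\rangle=c_\mu$, and the pointwise nonnegativity obtained from $(H_{10})$ in Step~3 leaves $\Psi(\|v_\mu\|)\le Cc_\mu$ with $C$ independent of $\mu$; letting $c_\mu\to0$ forces $\|v_\mu\|\to0$, which is \eqref{asym}.

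I expect the genuinely delicate points to be, on the one hand, the boundedness of Cerami sequences and the final norm estimate, where the logarithm is sign-changing and of the dominant order $r(\cdot)$ so that the balance encoded in $(H_{10})$ and the auxiliary exponent $\sigma_0$ must be used sharply; and, on the other hand, the quantitative lower bound $c^*$ for concentration levels, which requires adapting the concentration--compactness machinery of \cite{zuo,Ho-Conc} to the simultaneous presence of two fractional phases and of the nonlocal Robin term.
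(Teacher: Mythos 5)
Your proposal is correct and follows the same overall architecture as the paper's proof: mountain pass geometry for every $\mu>0$ (Lemma \ref{geo2}), boundedness of the minimizing sequences via $J_\mu(u_n)-\tfrac1{\sigma_0}\langle J_\mu'(u_n),u_n\rangle$ with the negative logarithmic contribution on $\{|u_n|<1\}$ absorbed into the $\eta(\cdot)$-term through Lemma \ref{log-in} and $(H_{10})$ (you correctly identified that the constant $\tfrac{(\eta^--\sigma_0)r^+}{\eta^-(r^+-\sigma_0)}$ is exactly the ratio $(\tfrac1{\sigma_0}-\tfrac1{\eta^-})/(\tfrac1{\sigma_0}-\tfrac1{r^+})$), the asymptotic $c_\mu\to0$ via $t_\mu\to0$ (Lemma \ref{mu}), and the final estimate $c_\mu\geq(\tfrac1{2p_2^+}-\tfrac1{\sigma_0})\rho(v_\mu)$ yielding \eqref{asym}. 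The one genuine divergence is the mechanism for compactness in your Step~3: you invoke the atomic concentration--compactness principle with cutoff test functions at each concentration point, and you yourself flag as delicate the adaptation of that machinery to the two fractional phases and the nonlocal Robin term. The paper avoids this entirely: it works globally with Brezis--Lieb identities for the combined modular $\rho$ and for the weighted critical term (from \cite{Fu,zuo}), deduces $\lim_n\rho(v_n-v_\mu)=\lim_n\int_\Omega b(x)|v_n-v_\mu|^{\alpha(x)}dx$ in \eqref{ps7}, and converts this through the critical embedding constant $S$ into the dichotomy of \eqref{ps8}: either $\xi_\mu=0$ (whence strong convergence follows directly, without even needing the $(S_+)$ property at that stage) or $c_\mu$ is bounded below by an explicit $\mu$-independent constant, which is excluded for large $\mu$ by Lemma \ref{mu} via \eqref{ps4}. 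Both routes produce the same ``$\mu$-free threshold $c^*$'' phenomenon; the paper's is more elementary and sidesteps the localization issues you anticipate, at the cost of being tied to the specific weighted $L^{\alpha(\cdot)}$ structure, while your atomic approach would generalize more readily to situations where a global Brezis--Lieb identity is unavailable. (Two cosmetic differences: the paper works with Palais--Smale rather than Cerami sequences, and establishes the weak-to-weak continuity of the operators $\mathcal L_i$ explicitly in \eqref{serve} where you rely on the $(S_+)$ property.)
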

    The symbol $\|\,\cdot\,\|$ in Theorem \ref{mainthm-3} denotes the norm of the solution space $X$, which will be introduced in Section \ref{sec3}. The proof of Theorem \ref{mainthm-3} is mainly based on the application of the mountain pass theorem. In this direction, assumption $(H_{10})$ plays a crucial role to verify the validity of the Palais Smale condition at the critical mountain pass level $c_\mu$. Indeed, in \eqref{eq2} we have to deal with a negative contribution of $\log|u|$, whenever $u$ is sufficiently small. For this, we need the relation between weight functions $a(\cdot)$ and $c(\cdot)$ given $(H_{10})$, which helps us to study the asymptotic behavior of $c_\mu$ whenever $\mu\to\infty$, useful also for proving \eqref{asym}.
Finally, Theorems \ref{mainthm}, \ref{mainthm-2} and \ref{mainthm-3} generalize in several directions \cite[Theorem 4.3]{BRW}.

    The paper is organized as follows. In Section \ref{sec2} we present the
    basic properties of the fractional Sobolev space with variable
    exponent. In Section \ref{sec3}, we introduce our solution space $X$ with some
    basic properties and crucial technical lemmas, and we state the
    variational formulation of \eqref{eq1} and \eqref{eq2}. Moreover, we  prove a result
    which provides an estimate for logarithmic nonlinearity.
    In Section \ref{sec4}, combining these abstract results with the
    variational arguments, we give the proofs of Theorem \ref{mainthm}
    and  Theorem \ref{mainthm-2}. In the last section, we deal with the critical case and establish the proof of Theorem \ref{mainthm-3}.

    \section{Preliminaries results}\label{sec2}

    \subsection{Variable exponent Lebesgue spaces}

    In this section first we recall some basic properties of the  variable exponent Lebesgue spaces, which we will use  to prove our main results.

    For $q(\cdot)\in C_+(\Om)$, we define the variable exponent Lebesgue space $L^{q(\cdot)}(\Om)$ as
    $$
    L^{q(\cdot)}(\Om) := \left\{ u : \Om\to\mathbb{R}\  \text{is~measurable}\,\bigg|\, \int_{\Om} |u(x)|^{q(x)} \;dx<\infty \right\}
    $$
    which is a separable, reflexive, uniformly convex Banach space, {see \cite{diening,fan}}, with respect to the Luxemburg norm
    $$
    \|{u}\|_{ L^{q(\cdot)}(\Om)}:=\inf\left\{\eta
    >0:\,\,\rho_{\Om}^q\left(\frac{u}{\eta}\right)\le1\right\},
    $$
    where $\rho_{\Om}^{q}:\ L^{q(\cdot)}(\Om)\to\mathbb{R}$ is the modular function set as
    $$
    \rho_{\Om}^q(u):=\int_{\Om }|u|^{q(x)}\; dx, \  \text{ for~all~} u\in L^{q(\cdot)}(\Om).
    $$
    Then, we have the following relation between the Luxemburg norm and the modular function.

    \begin{prp} \label{norm-mod}
        {\rm (\cite{fan})} Let $u\in L^{q(\cdot)}(\Om)$, $(u_n)_{n\geq1}\subset L^{q(\cdot)}(\Om)$ and $\eta>0$. Then:
        \begin{enumerate}
            \item[(i)]  If $u\neq0$, then $\eta=\|u\|_{ L^{q(\cdot)}(\Om)}$ if and only if  $\rho_{\Om}^q(\frac{u}{\eta})=1$;
            \item[(ii)] $\rho_{\Om}^q(u)>1$ $(=1;\ <1)$ if and only if  $\|u\|_{ L^{q(\cdot)}(\Om)}>1$ $(=1;\ <1)$,
            respectively;
            \item[(iii)] If $\|u\|_{ L^{q(\cdot)}(\Om)}>1$, then $
            \|u\|_{ L^{q(\cdot)}(\Om)}^{q^-}\le \rho_{\Om}^q(u)\le
            \|u\|_{L^{q(\cdot)}(\Om)}^{q^+}$;
            \item[(iv)] If $\|u\|_{ L^{q(\cdot)}(\Om)}<1$, then
            $\|u\|_{L^{q(\cdot)}(\Om)}^{q^+}\le \rho_{\Om}^q(u)\le
            \|u\|_{L^{q(\cdot)}(\Om)}^{q^-}$;
            \item[(v)] ${\displaystyle \lim_{n\to  \infty} }\| u_{n} - u \|_{ L^{q(\cdot)}(\Om)} =0$ if and only if ${\displaystyle \lim_{n\to  \infty}} \rho_{\Om}^q(u_{n} -u)=0.$
        \end{enumerate}
    \end{prp}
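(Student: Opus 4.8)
The plan is to reduce all five statements to elementary properties of the single-variable function
\[
h_u(\eta) := \rho^q_\Om\!\left(\frac{u}{\eta}\right) = \int_\Om \frac{|u(x)|^{q(x)}}{\eta^{q(x)}}\,dx, \qquad \eta > 0,
\]
attached to a fixed $u \in L^{q(\cdot)}(\Om)$. First I would check that, whenever $u \neq 0$, the map $h_u$ is a continuous, strictly decreasing bijection of $(0,\infty)$ onto $(0,\infty)$: finiteness $h_u(\eta) < \infty$ for all $\eta > 0$ comes from $q^+ < \infty$ together with $\rho^q_\Om(u) < \infty$ (the defining property of $L^{q(\cdot)}(\Om)$), via the crude bound $h_u(\eta) \le \max\{\eta^{-q^-}, \eta^{-q^+}\}\,\rho^q_\Om(u)$; monotonicity is clear since $\eta \mapsto |u(x)/\eta|^{q(x)}$ is non-increasing for a.e.\ $x$ and strictly decreasing on $\{u \neq 0\}$; continuity follows from dominated convergence (near any $\eta_0 > 0$ one dominates by the integrand evaluated at $\eta_0/2$); and monotone convergence gives $h_u(\eta) \to +\infty$ as $\eta \to 0^+$ and $h_u(\eta) \to 0$ as $\eta \to +\infty$. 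From this there is a unique $\eta_0 > 0$ with $h_u(\eta_0) = 1$, and since $\{\eta > 0 : h_u(\eta) \le 1\} = [\eta_0, +\infty)$ its infimum is exactly $\eta_0$; this proves (i).

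Next I would derive (ii): for $u = 0$ it is trivial, while for $u \neq 0$, setting $\eta_0 = \|u\|_{L^{q(\cdot)}(\Om)}$, item (i) gives $h_u(\eta_0) = 1$, and since $\rho^q_\Om(u) = h_u(1)$ with $h_u$ strictly decreasing, the three cases $\rho^q_\Om(u) > 1$, $= 1$, $< 1$ correspond precisely to $\eta_0 > 1$, $= 1$, $< 1$. For (iii) and (iv) I would again take $u \neq 0$ (the null case being trivial) and $\eta_0 = \|u\|_{L^{q(\cdot)}(\Om)}$, write
\[
\rho^q_\Om(u) = \int_\Om \eta_0^{q(x)}\,\frac{|u(x)|^{q(x)}}{\eta_0^{q(x)}}\,dx,
\]
and bound $\eta_0^{q(x)}$ between $\eta_0^{q^-}$ and $\eta_0^{q^+}$ — with the ordering dictated by whether $\eta_0 > 1$ or $\eta_0 < 1$ — using $\int_\Om |u/\eta_0|^{q(x)}\,dx = h_u(\eta_0) = 1$ to obtain the stated inequalities. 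Finally, (v) follows by applying (ii) and (iv) to $v_n := u_n - u$: if $\|v_n\|_{L^{q(\cdot)}(\Om)} \to 0$ then $\|v_n\|_{L^{q(\cdot)}(\Om)} < 1$ eventually, so $\rho^q_\Om(v_n) \le \|v_n\|_{L^{q(\cdot)}(\Om)}^{q^-} \to 0$; conversely, if $\rho^q_\Om(v_n) \to 0$ then $\rho^q_\Om(v_n) < 1$ eventually, hence $\|v_n\|_{L^{q(\cdot)}(\Om)} < 1$ by (ii), and then $\|v_n\|_{L^{q(\cdot)}(\Om)}^{q^+} \le \rho^q_\Om(v_n) \to 0$ by (iv).

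The only step carrying genuine analytic content is the first one — establishing continuity of $h_u$ together with the boundary behaviour $h_u(0^+) = +\infty$, $h_u(+\infty) = 0$ — since this is what turns the Luxemburg infimum into an attained minimum identified with the unique solution of $h_u(\eta) = 1$; it is exactly here that $q^+ < \infty$ enters (so that the modular is finite for every rescaling and controls the dilation factors). Everything afterwards is routine bookkeeping. These arguments are standard for variable exponent Lebesgue spaces, so I would simply cite \cite{fan} for the details rather than reproduce them.
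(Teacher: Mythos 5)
Your proof is correct and is the standard argument for these modular--norm relations in variable exponent Lebesgue spaces: the reduction to the continuous, strictly decreasing function $h_u(\eta)=\rho^q_\Omega(u/\eta)$ with $h_u(0^+)=+\infty$ and $h_u(+\infty)=0$ is exactly the route taken in the cited source. The paper itself offers no proof, deferring entirely to \cite{fan}, so there is nothing to compare beyond noting that your sketch faithfully reproduces that standard argument.
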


    Let $q'(\cdot)\in C_+(\Om)$ be the conjugate function of $q(\cdot),  $ that is  $1/q(x)+1/q'(x)=1$ for any $x\in\Omega$. Then, we have the following H\"{o}lder inequality for variable exponent Lebesgue spaces.

    \begin{lemma}{\rm (H\"{o}lder inequality)} \label{Holder}{\rm (\cite{fan})}
        For any $u\in
        L^{q(\cdot)}(\Om)$ and $v\in L^{q'(\cdot)}(\Om)$, we have
        {$$
            \Big|\int_{\Om} uv\,d x\Big|
            \leq
            2\|u\|_{ L^{q(\cdot)}(\Om)}\|{v}\|_{
                L^{q'(\cdot)}(\Om)}.
            $$}
    \end{lemma}

    \begin{lemma}{\rm(\cite{sweta})}\label{lemA1}
        Let  $\vartheta_1\in L^\infty(\Om)$ such that $\vartheta_1\geq0$ and $\vartheta_1\not\equiv 0$ a.e. in $\Omega$. Let $\vartheta_2:\Om\to \RR$ be a measurable function      such that $\vartheta_1\vartheta_2\geq 1$ a.e. in $\Om.$ Then for every $u\in L^{\vartheta_1(\cdot)\vartheta_2(\cdot)}(\Om),$
        $$
        \left\| |u|^{\vartheta_1(\cdot)}\right\|_{L^{\vartheta_2(x)}(\Om)}\leq\|u\|_{L^{\vartheta_1\vartheta_2(\cdot)}(\Om)}^{\vartheta_1^-}+\| u\|_{L^{\vartheta_1\vartheta_2(\cdot)}(\Om)}^{\vartheta_1^+}.
        $$
    \end{lemma}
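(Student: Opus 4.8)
The plan is to argue directly from the modular characterisation of the Luxemburg norm, reducing the whole statement to an elementary pointwise comparison between the exponents $\vartheta_2(\cdot)$ and $\vartheta_1(\cdot)\vartheta_2(\cdot)$. First I would dispose of the trivial case $u=0$ a.e.\ (both sides vanish) and otherwise set $\mu:=\|u\|_{L^{\vartheta_1(\cdot)\vartheta_2(\cdot)}(\Om)}\in(0,\infty)$; by the definition of the Luxemburg norm, or equivalently by Proposition~\ref{norm-mod}(i), one has
\[
\rho_{\Om}^{\vartheta_1\vartheta_2}\!\left(\frac{u}{\mu}\right)=\int_{\Om}\left(\frac{|u(x)|}{\mu}\right)^{\vartheta_1(x)\vartheta_2(x)}\,dx\le 1 .
\]

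The key step is then to choose the competitor $\eta_0:=\mu^{\vartheta_1^-}+\mu^{\vartheta_1^+}>0$ in the infimum defining $\||u|^{\vartheta_1(\cdot)}\|_{L^{\vartheta_2(\cdot)}(\Om)}$ and to check that $\rho_{\Om}^{\vartheta_2}\!\big(|u|^{\vartheta_1}/\eta_0\big)\le1$, which at once gives $\||u|^{\vartheta_1}\|_{L^{\vartheta_2(\cdot)}(\Om)}\le\eta_0$, i.e.\ the asserted bound. To verify this modular inequality I would first observe that $\vartheta_1\vartheta_2\ge1$ together with $\vartheta_1\in L^\infty$ forces $\vartheta_2\ge 1/\vartheta_1^+>0$ a.e., so that $t\mapsto t^{\vartheta_2(x)}$ is increasing on $(0,\infty)$ for a.e.\ $x$; hence it suffices to show $\eta_0\ge\mu^{\vartheta_1(x)}$ a.e. This is the two-case observation that produces the two powers on the right-hand side: if $\mu\ge1$ then $t\mapsto\mu^t$ is nondecreasing, so $\mu^{\vartheta_1(x)}\le\mu^{\vartheta_1^+}\le\eta_0$, whereas if $0<\mu<1$ then $t\mapsto\mu^t$ is nonincreasing, so $\mu^{\vartheta_1(x)}\le\mu^{\vartheta_1^-}\le\eta_0$ (in each case the remaining power, being nonnegative, does no harm). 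Raising to the power $\vartheta_2(x)$ and dividing, the integrand of $\rho_{\Om}^{\vartheta_2}(|u|^{\vartheta_1}/\eta_0)$ is pointwise dominated a.e.\ by that of $\rho_{\Om}^{\vartheta_1\vartheta_2}(u/\mu)$, so
\[
\rho_{\Om}^{\vartheta_2}\!\left(\frac{|u|^{\vartheta_1}}{\eta_0}\right)=\int_{\Om}\frac{|u(x)|^{\vartheta_1(x)\vartheta_2(x)}}{\eta_0^{\vartheta_2(x)}}\,dx\le\int_{\Om}\left(\frac{|u(x)|}{\mu}\right)^{\vartheta_1(x)\vartheta_2(x)}\,dx\le1,
\]
which closes the estimate.

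I do not expect a genuine obstacle: this is a scaling lemma, and the only points that require a little care are the measurability and finiteness of the composite exponents $\vartheta_1\vartheta_2$ and $\vartheta_2$, and the strict positivity $\eta_0>0$ needed to make $\eta_0$ an admissible competitor in the infimum — all of which follow from $\vartheta_1\in L^\infty$, $\vartheta_1\ge0$, $\vartheta_1\vartheta_2\ge1$ and $u\not\equiv0$. Everything else is the elementary case split on whether $\mu\ge1$ or $\mu<1$, which is precisely where the asymmetric right-hand side $\|u\|^{\vartheta_1^-}+\|u\|^{\vartheta_1^+}$ comes from.
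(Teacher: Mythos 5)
Your argument is correct and complete: the choice of the competitor $\eta_0=\mu^{\vartheta_1^-}+\mu^{\vartheta_1^+}$, the pointwise bound $\mu^{\vartheta_1(x)}\le\eta_0$ via the case split $\mu\ge1$ versus $\mu<1$, and the resulting domination of the modular $\rho_{\Om}^{\vartheta_2}(|u|^{\vartheta_1}/\eta_0)$ by $\rho_{\Om}^{\vartheta_1\vartheta_2}(u/\mu)\le1$ are all sound. The paper itself gives no proof of this lemma, merely citing \cite{sweta}; your unit-ball/modular argument is precisely the standard proof given in that reference, so there is nothing to add.
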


    \subsection{Variable order fractional Sobolev spaces with variable exponents}

    Next, we define the fractional Sobolev spaces with variable order and variable exponents, as in {\cite{rs}}. We set
    \begin{align*}
        W&=W^{s_2(\cdot),p_2(\cdot)}(\Om)\nonumber\\ &:=\left\{ u\in L^{\overline{p}_2(\cdot)}(\Om):
        \int_{\Om}\int_{\Om}\frac{| u(x)-u(y)|^{p_2(x,y)}}{\eta^{p_2(x,y)}| x-y |^{N+s_2(x,y)p_2(x,y)}}dxdy<\infty,
        \text{ for some }\eta>0\right\}
    \end{align*}
    endowed with the norm
    $$
    \|u\|_{W}:=[u]_{s_2(\cdot),p_2(\cdot),\Omega}+\|u\|_{L^{\overline{p}_2(\cdot)}(\Om)},
    $$
    where in general $[\,\cdot\,]_{s_i(\cdot),p_i(\cdot),\mathcal{D}}$ is defined as follows
    $$
    [u]_{s_i(\cdot),p_i(\cdot),\mathcal{D}}:=\inf \left\{\eta>0:\int_{\mathcal{D}}\int_{\mathcal{D}}\frac{|u(x)-u(y)|^{p_i(x,y)}}{\eta^{p_i(x,y)}|x-y|^{N+s_i(x,y)p_i(x,y)}}\,d x d y <1 \right\},
    $$
    for $i\in\{1,2\}$ and $\mathcal{D}$ a generic set.
    Then, $(W, \|\cdot\|_{W})$ is a separable, reflexive Banach space, as shown in \cite{rs,ky-ho}.

    The following embedding result is studied in \cite{rs}. We also refer to \cite{ky-ho} where the authors proved the same result when $s(x,y)=s$ is a constant.
    \begin{lemma}[Sub-critical embedding]\label{Subcritical-embd}
        Let $\Omega$ be a smooth bounded  domain in $\mathbb{R}^N$ or $\Omega=\mathbb{R}^N$. Let $s_2(\cdot)$ and $p_2(\cdot)$ satisfy $(H_1)$-$(H_2),$  with $s_2(x,y)p_2(x,y)<N$ for any $(x,y)\in\overline{\Omega}\times\overline{\Omega}$. Assume that $\gamma(\cdot)\in C_+(\ol\Om)$ verifies $1<\gamma(x)<{p_2}_{s_2}^*(x)$ for all $x\in\ol\Om$.
        In addition, when  $\Omega=\mathbb{R}^N$, let $\gamma$ be uniformly continuous and  $\ol {p}_2(x)<\gamma(x)$ for all $x\in \mathbb{R}^N$ and $\inf_{x\in\R^N}\left({p_2}_{s_2}^*(x)-\gamma(x)\right)>0$. Then, it holds that
        \begin{equation*}
            W  \hookrightarrow
            L^{\gamma(\cdot)}(\Omega).
        \end{equation*}
        Moreover, if $\Omega$ be a smooth bounded  domain in $\mathbb{R}^N$, the embedding is compact.
    \end{lemma}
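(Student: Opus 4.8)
The plan is to reduce this variable-order, variable-exponent embedding to the classical constant-exponent fractional Sobolev embedding (e.g.\ \cite[Theorems 6.7 and 7.1]{23}) by freezing the exponents on a fine cover and gluing with a partition of unity, which is essentially the route underlying \cite{rs,ky-ho}. I would treat the case $\Om$ smooth bounded in detail; the case $\Om=\R^N$ follows the same scheme with a locally finite cover, using the uniform gap $\inf_{\R^N}({p_2}_{s_2}^*-\gamma)>0$ to pick the auxiliary constants uniformly in the cover, and the condition $\ol p_2(x)<\gamma(x)$ to estimate the tail $\{|u|\le1\}$ globally via $|u|^{\gamma(x)}\le|u|^{\ol p_2(x)}$ there. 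First, since $s_2,p_2,\gamma$ are continuous and $\gamma<{p_2}_{s_2}^*$ on the compact set $\ol\Om$ (and $x\mapsto{p_2}_{s_2}^*(x)$ is continuous), I can cover $\ol\Om$ by finitely many balls $B_1,\dots,B_m$ of diameter $\le1$, so small that on each $\ol{B_k}$
\[
\gamma^+_k:=\sup_{\ol{B_k}\cap\ol\Om}\gamma<\inf_{\ol{B_k}\cap\ol\Om}{p_2}_{s_2}^*,
\]
while the oscillations of $p_2$ and $s_2$ on $(\ol{B_k}\cap\ol\Om)^2$ are as small as I wish. I then fix a Lipschitz partition of unity $\{\phi_k\}_{k=1}^m$ subordinate to $\{B_k\}$ with $\sum_k\phi_k\equiv1$ on $\ol\Om$ and decompose $u=\sum_k\phi_k u$.

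The core step is, for each $k$, to compare the variable Gagliardo seminorm of $u$ on $B_k$ with a \emph{constant}-order one. Writing $p^\pm_k,s^\pm_k$ for the extrema of $p_2,s_2$ on $(\ol{B_k}\cap\ol\Om)^2$, I would fix constants $q_k=p^-_k-\e_1$ and $\sigma_k=s^-_k-\e_2$ with $\e_1,\e_2>0$ small (both in the admissible ranges since $p^-_k>1$ and $s^-_k\in(0,1)$), and — once the cover is fine enough — arrange \emph{simultaneously}
\[
q_k^*:=\frac{Nq_k}{N-\sigma_kq_k}>\gamma^+_k
\qquad\text{and}\qquad
\beta_k:=N\Big(\tfrac{q_k}{p^+_k}-1\Big)+q_k\,(s^-_k-\sigma_k)>0.
\]
Both are possible because $q_k^*\to{p_2}_{s_2}^*>\gamma^+_k$ as the cover refines and $\e_i\to0$, whereas $\beta_k>0$ only forces $\e_2\gtrsim N(\e_1+p^+_k-p^-_k)/(p^-_k)^2$, compatible with $\e_1,\e_2\to0$. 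Then I claim $\|\phi_k u\|_{W^{\sigma_k,q_k}(\Om)}\le C\|u\|_W$: the Lebesgue part is clear since $q_k<\ol p_2$ on $B_k\cap\Om$ and $|\Om|<\infty$, while after the standard product-rule estimates for the Lipschitz cutoff $\phi_k$ it remains to bound $[u]_{\sigma_k,q_k,B_k\cap\Om}$. On $\{|u(x)-u(y)|>1\}$ one uses $q_k\le p_2(x,y)$, $\sigma_kq_k\le s_2(x,y)p_2(x,y)$ and $\diam B_k\le1$ to dominate the integrand by $|u(x)-u(y)|^{p_2(x,y)}|x-y|^{-N-s_2(x,y)p_2(x,y)}$; on $\{|u(x)-u(y)|\le1\}$, with $g(x,y):=|u(x)-u(y)|^{p_2(x,y)}|x-y|^{-N-s_2(x,y)p_2(x,y)}$, one writes
\[
\frac{|u(x)-u(y)|^{q_k}}{|x-y|^{N+\sigma_kq_k}}=g(x,y)^{q_k/p_2(x,y)}\,|x-y|^{\beta(x,y)},\qquad \beta(x,y)\ge\beta_k>0,
\]
so that, since $\diam B_k\le1$ gives $|x-y|^{\beta(x,y)}\le1$ and $t^{\,q_k/p_2(x,y)}\le1+t$, the integrand is $\le C(1+g(x,y))$, which is integrable because $u\in W$ and $B_k$ is bounded. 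Hence $[u]_{\sigma_k,q_k,B_k\cap\Om}\le C\|u\|_W$, proving the claim; this is exactly the variable-exponent analogue of the embedding lemmas in \cite{kaufmann,6,rs}.

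To finish, since $\Om$ is a smooth bounded domain it is an extension domain, so \cite[Theorems 6.7, 7.1]{23} give $W^{\sigma_k,q_k}(\Om)\hookrightarrow L^t(\Om)$ for every $t\in[q_k,q_k^*]$ (note $\sigma_kq_k<s_2^+p_2^+<N$ by $(H_2)$), compactly when $t<q_k^*$. Taking $t=\gamma^+_k<q_k^*$ and using $\supp\phi_k\subset\ol{B_k}$, $\gamma\le\gamma^+_k$ there and $|\Om|<\infty$, I get $\|\phi_k u\|_{L^{\gamma(\cdot)}(\Om)}\le C\|\phi_k u\|_{L^{\gamma^+_k}(\Om)}\le C\|u\|_W$; summing over $k$ and using $u=\sum_k\phi_k u$ yields $W\hookrightarrow L^{\gamma(\cdot)}(\Om)$. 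For compactness, if $(u_n)$ is bounded in $W$ then each $(\phi_k u_n)_n$ is bounded in $W^{\sigma_k,q_k}(\Om)$, hence precompact in $L^{\gamma^+_k}(\Om)\hookrightarrow L^{\gamma(\cdot)}(B_k\cap\Om)$; a finite diagonal extraction over $k=1,\dots,m$ and summation give a subsequence of $(u_n)$ converging in $L^{\gamma(\cdot)}(\Om)$.

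The main obstacle is the seminorm comparison of the second paragraph: one must choose the auxiliary constant exponents $q_k<p^-_k$ and $\sigma_k<s^-_k$ so that the critical exponent $q_k^*$ stays strictly above $\gamma^+_k$ while the residual kernel exponent $\beta_k$ is strictly positive, and these two demands pull in opposite directions — the first wants $(q_k,\sigma_k)$ very close to the local infima, the second needs some slack — so they can be reconciled only by first refining the cover (making the relevant oscillations of $p_2,s_2$ and the gap below ${p_2}_{s_2}^*$ small) and only then selecting $\e_1,\e_2$. The covering, the partition-of-unity gluing, the classical fractional Sobolev embedding, and the diagonal extraction are all routine.
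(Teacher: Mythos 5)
The paper does not prove this lemma itself but quotes it from \cite{rs} (and \cite{ky-ho} for constant order), and the argument in those references is exactly the exponent-freezing-plus-covering scheme you describe: refine a finite cover of $\overline\Omega$ until the oscillations of $s_2,p_2$ and the gap $\gamma<{p_2}_{s_2}^*$ permit a simultaneous choice of constant auxiliary exponents $q_k<p_k^-$, $\sigma_k<s_k^-$ with $q_k^*>\gamma_k^+$ and positive residual kernel exponent, compare seminorms by splitting over $\{|u(x)-u(y)|\lessgtr1\}$, and invoke the classical fractional Sobolev embedding on each piece. Your reconstruction is correct and essentially coincides with the cited proof; the only points left implicit (the normalization $\|u\|_W\le1$ needed to pass from a modular bound to a norm bound, and the cross terms in the cutoff estimate) are routine.
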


    Now we recall the following critical embedding result, given in
    \cite{zuo}, which is a consequence of \cite[Theorem 3.3]{Ho-Conc}.
    \begin{lemma}[Critical embedding]\label{crit}
        Let $\Om$ be a smooth bounded domain in $\R^N.$ Let $s_2(\cdot)$ and $p_2(\cdot)$ satisfy   $(H_1)$-$(H_2)$ and $(H_{7})$
        with $s_2(x,y)p_2(x,y)<N$ for any
        $(x,y)\in\overline{\Omega}\times\overline{\Omega}$. Let
        $\alpha(\cdot)$ satisfy $(H_{8})$. Then there exists a positive
        constant $C_\al=C_\al(N,s,p,\alpha,\Omega) $ such that
        \begin{equation*}
            \|u\|_{\alpha(\cdot)}\leq C_\al\|u\|_{W}
        \end{equation*}
        for any $u\in W$. That is, the embedding
        $W\hookrightarrow       L^{\alpha(\cdot)}(\Omega)$ is continuous.
    \end{lemma}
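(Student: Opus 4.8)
As already remarked, the statement is contained in \cite{zuo}, and at its core it reduces the variable order situation to the constant order critical embedding of Ho and Kim, \cite[Theorem 3.3]{Ho-Conc}, by a localization argument; here is the scheme I would follow. Since $\alpha(\cdot)$, $s_2(\cdot)$ and $p_2(\cdot)$ are continuous and $\overline{\Omega}$ is compact, to each $x\in\overline{\Omega}$ attach a radius $\varepsilon(x)\in(0,1/4)$ for which $(H_{8})$ holds (shrink the one given by $(H_{8})$ if necessary). The balls $\{B_{\varepsilon(x)/2}(x)\}_{x\in\overline{\Omega}}$ cover $\overline{\Omega}$, so extract a finite subcover $B_{\varepsilon_j/2}(x_j)$, $j=1,\dots,m$, and set $U_j:=\Omega\cap B_{\varepsilon_j}(x_j)$ and $V_j:=\Omega\cap B_{\varepsilon_j/2}(x_j)$. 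Then $\Omega=\bigcup_{j=1}^{m}V_j$, each $\overline{V_j}$ has diameter $<1$, and, using that $\partial\Omega$ is smooth and shrinking the radii once more if needed, each $V_j$ is a bounded Lipschitz domain. Freeze the exponents on $U_j$ by $\sigma_j:=\inf_{U_j\times U_j}s_2(\cdot)\in(0,1)$ and $p_j:=\inf_{U_j\times U_j}p_2(\cdot)$; then $p_j\ge p_2^{-}>1$, $\sigma_j\,p_2(x,y)\le s_2(x,y)\,p_2(x,y)\le s_2^{+}p_2^{+}<N$ for $(x,y)\in V_j\times V_j$ by $(H_{2})$, and—this is the key consequence of $(H_{8})$—
\begin{equation*}
\alpha(y)\ \le\ \frac{N\,p_j}{N-\sigma_j\,p_j}\ \le\ \frac{N\,\overline{p}_2(y)}{N-\sigma_j\,\overline{p}_2(y)}\qquad\text{for every }y\in V_j,
\end{equation*}
where the second inequality uses $p_j\le\overline{p}_2(y)$ and the monotonicity of $t\mapsto Nt/(N-\sigma_j t)$.

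\noindent\textbf{Reduction to constant order on each patch.} Fix $u\in W$. By homogeneity of the norms $\|\cdot\|_{L^{\alpha(\cdot)}(\Omega)}$ and $\|\cdot\|_{W}$ we may assume $\|u\|_{W}\le1$, hence $[u]_{s_2(\cdot),p_2(\cdot),\Omega}\le1$ and $\|u\|_{L^{\overline{p}_2(\cdot)}(\Omega)}\le1$; in particular (as in Proposition~\ref{norm-mod}) $\int_{\Omega}\int_{\Omega}|u(x)-u(y)|^{p_2(x,y)}|x-y|^{-N-s_2(x,y)p_2(x,y)}\,dx\,dy\le1$. Fix $j$. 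Since $\diam(V_j)<1$ we have $|x-y|<1$ and $\sigma_j\le s_2(x,y)$ for $(x,y)\in V_j\times V_j$, so the pointwise inequality
\begin{equation*}
\frac{|u(x)-u(y)|^{p_2(x,y)}}{|x-y|^{N+\sigma_j\,p_2(x,y)}}\ \le\ \frac{|u(x)-u(y)|^{p_2(x,y)}}{|x-y|^{N+s_2(x,y)p_2(x,y)}}
\end{equation*}
holds there; integrating over $V_j\times V_j$ and using the bound above gives $\int_{V_j}\int_{V_j}|u(x)-u(y)|^{p_2(x,y)}|x-y|^{-N-\sigma_j p_2(x,y)}\,dx\,dy\le1$, whence $[u]_{\sigma_j,p_2(\cdot),V_j}\le1$. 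Together with $\|u\|_{L^{\overline{p}_2(\cdot)}(V_j)}\le\|u\|_{L^{\overline{p}_2(\cdot)}(\Omega)}\le1$ this shows that $u|_{V_j}$ lies in the constant order variable exponent fractional Sobolev space $W^{\sigma_j,p_2(\cdot)}(V_j)$ with $\|u\|_{W^{\sigma_j,p_2(\cdot)}(V_j)}\le2$. This is the step where the \emph{variable} order is harmless: for $|x-y|<1$ a larger exponent on $|x-y|$ in the kernel only makes it more singular, so it dominates the frozen order kernel.

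\noindent\textbf{Constant order critical embedding and patching.} On the bounded Lipschitz domain $V_j$, the exponent $p_2(\cdot)$ still obeys the log-H\"{o}lder condition $(H_{7})$, one has $\sigma_j\,p_2(x,y)<N$, and the bound $\alpha(y)\le N\overline{p}_2(y)/(N-\sigma_j\overline{p}_2(y))$ on $V_j$ from the first paragraph is precisely the local critical growth hypothesis needed to apply \cite[Theorem 3.3]{Ho-Conc} with the constant order $\sigma_j$; note that equality is permitted in this bound, so the \emph{critical} statement—not merely the subcritical one—is genuinely required. Hence there is a continuous embedding $W^{\sigma_j,p_2(\cdot)}(V_j)\hookrightarrow L^{\alpha(\cdot)}(V_j)$, say with constant $C_j$, and consequently $\|u\|_{L^{\alpha(\cdot)}(V_j)}\le 2C_j$ for every $u\in W$ with $\|u\|_{W}\le1$. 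Since $\Omega=\bigcup_{j=1}^{m}V_j$, subadditivity of the Luxemburg norm over a finite cover yields $\|u\|_{L^{\alpha(\cdot)}(\Omega)}\le\sum_{j=1}^{m}\|u\|_{L^{\alpha(\cdot)}(V_j)}\le 2\sum_{j=1}^{m}C_j$ for such $u$, and by homogeneity $\|u\|_{L^{\alpha(\cdot)}(\Omega)}\le C_\alpha\|u\|_{W}$ for all $u\in W$, with $C_\alpha:=2\sum_{j=1}^{m}C_j$. The only substantial ingredient is the constant order critical embedding \cite[Theorem 3.3]{Ho-Conc} used here (whose proof itself rests on $(H_{7})$ and a covering/scaling argument); the passage from variable to constant order is soft once one localizes to balls of diameter $<1$, and the one point requiring care is to match the frozen exponents $\sigma_j,p_j$ with the precise form of $(H_{8})$ so that the local critical condition genuinely holds on each $V_j$.
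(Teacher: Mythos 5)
The paper does not actually prove this lemma: it is stated as a recollection of a result from \cite{zuo}, which in turn is presented as a consequence of the constant-order critical embedding \cite[Theorem 3.3]{Ho-Conc}. So there is no in-paper argument to compare against line by line; what you have written is a reconstruction of the localization argument that underlies the cited result, and it is correct. The three essential ingredients are all present and used properly: (1) the finite cover of $\overline{\Omega}$ by patches of diameter less than $1$ on which $(H_8)$ freezes $\alpha$ below the critical exponent built from the frozen infima $\sigma_j=\inf s_2$ and $p_j=\inf p_2$, together with the monotonicity of $t\mapsto Nt/(N-\sigma_j t)$ to pass from $p_j$ to $\overline{p}_2(y)$; (2) the pointwise kernel comparison $|x-y|^{-N-\sigma_j p_2(x,y)}\le|x-y|^{-N-s_2(x,y)p_2(x,y)}$ for $|x-y|<1$, which is exactly why the variable order is harmless after localization; (3) the unit-ball (norm--modular) correspondence plus finite subadditivity of the Luxemburg norm over the cover, followed by homogeneity. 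Two small points you gloss over, neither fatal: the Lipschitz regularity of $\Omega\cap B_{\varepsilon_j}(x_j)$ requires the radii to be chosen so the spheres meet $\partial\Omega$ transversally (available for smooth $\partial\Omega$ after a small perturbation, as you indicate), and the subadditivity step should formally pass through a disjointification $\Omega=\bigsqcup_j W_j$ with $W_j\subset V_j$ before applying the triangle inequality. With those understood, your argument is a faithful and complete derivation of the lemma from \cite[Theorem 3.3]{Ho-Conc}, i.e., precisely the route the paper points to but does not write out.
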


    \section{Functional setting}\label{sec3}

    Here we introduce the variational framework for problems \eqref{eq1} and  \eqref{eq2}. Let $i\in\{1,2\}$ and let $s_i(\cdot)$, $p_i(\cdot)$ satisfy $(H_1)$-$(H_2)$. Assume that $\beta$ verifies $(H_3)$. We set
    \begin{align*}
        |u|_{{X{_{p_i}}}}:=
        [u]_{s_i(\cdot),p_i(\cdot),\R^{2N}\setminus (\mathcal{C}\Omega)^2}
        +\|u\|_{L^{\overline{p}_i(\cdot)}(\Omega)}+\left\|\beta^{\frac{1}{\overline{p}_i(\cdot)}}u\right\|_{L^{\overline{p}_i(\cdot)}(\mathcal{C}\Omega)},
    \end{align*}
    where $\mathcal{C}\Omega=\R^N\setminus\Omega$ and
    \begin{align*}
        X^{s_i(\cdot)}_{p_i(\cdot)}:=\left\{u\colon\R^N\to \R \text{ measurable } \bigg| \  |u|_{{X}_{p_i}}<\infty\right\}.
    \end{align*}
    By following standard arguments, in \cite[Proposition 3.1]{BRW} we can see that $X^{s_i(\cdot)}_{p_i(\cdot)}$ is a reflexive Banach space with respect to the norm $|\cdot|_{X_{p_i}}$.
    Furthermore, note that the norm $|\cdot|_{X_{p_i}}$ is equivalent on $X_{p_i(\cdot)}^{s_i(\cdot)}$ to the following norm
    \begin{align*}
        \begin{split}
            \|u\|_{X_{p_i}}&=\inf\left\{\eta\geq 0 :\ \rho_{p_i}\left(\frac{u}{\eta}\right)\leq 1\right\}\\
            &=\inf\left\{\eta\geq 0: \  \iint_{\R^{2N}\setminus (\mathcal{C}\Omega)^2}\frac{|u(x)-u(y)|^{p_i(x,y)}}{\eta^{p_i(x,y)}|x-y|^{N+s_i(x,y)p_i(x,y)}}\,dx\,dy
            + \int_{\Omega}\frac{|u|^{\overline{p}_i(x)}}{\eta^{\overline{p}_i(x)}}\,dx\right.\\
            &\qquad \qquad \qquad \quad \left.
            + \int_{\mathcal{C}\Omega}\frac{\beta(x)}{\eta^{\overline{p}_i(x)}}
            |u|^{\overline{p}_i(x)}\,dx\leq 1\right\},
        \end{split}
    \end{align*}
    where  $\rho_{p_i}\colon X_{p_i(\cdot)}^{s_i(\cdot)}\to \R$ is the modular related to $X_{p_i(\cdot)}^{s_i(\cdot)}$, defined by
    \begin{align}\label{mod}
        \rho_{p_i}\left(u\right)
        &=  \iint_{\R^{2N}\setminus (\mathcal{C}\Omega)^2}\frac{|u(x)-u(y)|^{p_i(x,y)}}{|x-y|^{N+s_i(x,y)p_i(x,y)}}\,dx\,dy
        + \int_{\Omega}{|u|^{\overline{p}_i(x)}}\,dx\nonumber\\
        &\quad
        + \int_{\mathcal{C}\Omega}{\beta(x)}
        |u|^{\overline{p}_i(x)}\,dx.
    \end{align}
    However, the natural solution space to study fractional $p_1(\cdot)\&p_2(\cdot)$-Laplacian problems such as \eqref{eq1} and \eqref{eq2} is given by
    $$
    X:=X^{s_1(\cdot)}_{p_1(\cdot)}\cap X^{s_2(\cdot)}_{p_2(\cdot)}
    $$
    endowed with the norm
    $$
    |u|_X=\|u\|_{{X}_{p_1}}+\|u\|_{{X}_{p_2}}.
    $$
    Clearly $X$ is still a reflexive and separable Banach space with respect to $|\,\cdot\,|_{X}$.
    It is not difficult to see that we can make use of another norm on $X$ equivalent to $|\,\cdot\,|_{X}$, given as
    $$
    \|u\|:=\|u\|_{X}= \inf\left\{\eta\geq 0: \ \rho\left(\frac{u}{\eta}\right)\leq 1\right\},
    $$
    where the combined modular $\rho:X\to\R$ is defined as
    \begin{equation}\label{rhomod}
    \rho(u)=\rho_{p_1}(u)+\rho_{p_2}(u)
    \end{equation}
    such that $\rho_{p_1}$, $\rho_{p_2}$ are  described  as in
    \eqref{mod}.

    Arguing similarly to Proposition \ref{norm-mod}, we can get the following comparison result.
    \begin{prp}\label{norm-modular}
        Let hypotheses $(H_1)$-$(H_3)$ be satisfied, let $u \in X$ and $\eta>0$. Then:
        \begin{enumerate}
            \item[(i)]
            If $u\neq0$, then $\|u\|=\eta$ if and only if $\rho(\frac{u}{\eta})=1$;
            \item[(ii)] If
            $\|u\|<1$, then $\|u\|^{p_2^+}\leq\rho(u)\leq \|u\|^{p_1^-}$;
            \item[(iii)] If
            $\|u\|>1$, then $ \|u\|^{p_1^-}\leq \rho(u)\leq\|u\|^{p_2^+}$.
        \end{enumerate}
    \end{prp}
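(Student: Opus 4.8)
The plan is to mimic the classical proof of Proposition \ref{norm-mod} for variable exponent Lebesgue spaces, but working with the combined modular $\rho$ from \eqref{rhomod}. The essential structural fact to record first is that $\rho$ is a convex modular on $X$: it is convex because each integrand $t\mapsto |t|^{p(x,y)}$ and $t\mapsto |t|^{\overline p_i(x)}$ is convex for exponents bigger than $1$, and these convexities are preserved under integration and summation over the finitely many pieces composing $\rho_{p_1}$, $\rho_{p_2}$; moreover $\rho(u)=0$ iff $u=0$ (a.e.), and $\rho$ is continuous and strictly increasing along rays $t\mapsto \rho(tu)$ for fixed $u\neq 0$. From the homogeneity degrees present in $\rho$ one gets the key scaling inequalities: for $0<\eta\le 1$,
\begin{equation*}
\eta^{p_2^+}\rho(u)\le \rho(\eta u)\le \eta^{p_1^-}\rho(u),
\end{equation*}
and for $\eta\ge 1$ the reversed chain $\eta^{p_1^-}\rho(u)\le \rho(\eta u)\le \eta^{p_2^+}\rho(u)$. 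These follow termwise: in each term the exponent lies in $[p_1^-,p_2^+]$ because $(H_1)$--$(H_2)$ force $p_1^-\le p_i(x,y)\le p_2^+$ and $p_1^-\le \overline p_i(x)\le p_2^+$, and for a fixed exponent $q\in[p_1^-,p_2^+]$ one has $\eta^{q}\le \eta^{p_1^-}$ and $\eta^{q}\ge \eta^{p_2^+}$ when $\eta\le1$ (and oppositely when $\eta\ge1$).

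For part (i), I would argue exactly as in the Luxemburg-norm setting. By definition $\|u\|=\inf\{\eta>0:\rho(u/\eta)\le 1\}$. Since $u\neq 0$, the map $\eta\mapsto \rho(u/\eta)$ is continuous on $(0,\infty)$, strictly decreasing, tends to $+\infty$ as $\eta\to 0^+$ (by the scaling inequality with large $1/\eta$, using $\rho(u)>0$) and tends to $0$ as $\eta\to\infty$; hence there is a unique $\eta_0$ with $\rho(u/\eta_0)=1$, and by monotonicity the infimum defining $\|u\|$ equals this $\eta_0$. Thus $\|u\|=\eta$ iff $\rho(u/\eta)=1$. The one delicate point — which I expect to be the main (mild) obstacle — is the continuity of $\eta\mapsto\rho(u/\eta)$, i.e. passing the limit inside the integrals; this is handled by dominated convergence once one checks that $\rho(u/\eta)<\infty$ for $\eta$ in a neighbourhood, which holds because $u\in X$ means $\rho_{p_i}(u/\eta_i)<\infty$ for some $\eta_i$, and then the scaling/convexity of $\rho$ propagates finiteness to all smaller $\eta$.

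For parts (ii) and (iii), assume $u\neq 0$ (the case $u=0$ is trivial, both sides vanish) and set $\eta=\|u\|$, so $\rho(u/\eta)=1$ by part (i). If $\|u\|<1$: apply the scaling inequality to $v=u/\eta$ with dilation factor $\eta<1$, namely $\rho(\eta v)=\rho(u)$ and $\eta^{p_2^+}\rho(v)\le \rho(\eta v)\le \eta^{p_1^-}\rho(v)$ with $\rho(v)=1$, which gives $\|u\|^{p_2^+}\le \rho(u)\le \|u\|^{p_1^-}$. If $\|u\|>1$: use instead the $\eta\ge1$ version of the scaling inequality applied with the same $v$ and dilation $\eta>1$, yielding $\|u\|^{p_1^-}\le \rho(u)\le \|u\|^{p_2^+}$. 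This closes the proof; the whole argument is a routine adaptation of Proposition \ref{norm-mod}, the only genuine content being the termwise verification that all exponents appearing in $\rho$ are trapped in $[p_1^-,p_2^+]$, which is immediate from $(H_1)$--$(H_2)$.
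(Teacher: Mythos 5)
Your argument is correct and is exactly the route the paper intends: the paper omits the proof, saying only ``Arguing similarly to Proposition \ref{norm-mod}'', and your termwise scaling inequalities $\eta^{p_2^+}\rho(u)\le\rho(\eta u)\le\eta^{p_1^-}\rho(u)$ for $\eta\le1$ (reversed for $\eta\ge1$), valid because $(H_2)$ traps every exponent in $[p_1^-,p_2^+]$, are precisely the adaptation needed. No gaps; the normalization $v=u/\|u\|$ with $\rho(v)=1$ from part (i) then yields (ii) and (iii) as you describe.
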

    \noindent
    Furthermore, it can easily be seen that
    $$\|u\|_W\leq\|u\|_{X_{p_2}}\leq\|u\|_X$$
    for any $u\in X$. From this and by Lemmas \ref{Subcritical-embd} and \ref{crit}, we get the following embedding results for the space $X$.
    \begin{lemma}\label{embd-X}
        Let $\Om$ be a smooth bounded domain in $\R^N.$ Let $s_i(\cdot)$ and $p_i(\cdot)$ satisfy $(H_1)$-$(H_2)$, for $i\in\{1,2\}$,  with $s_2(x,y)p_2(x,y)<N$ for any
        $(x,y)\in\overline{\Omega}\times\overline{\Omega}$. Also, let $\beta$ satisfy $(H_3)$.
        Assume that $\gamma(\cdot)\in C_+(\ol\Om)$ satisfies $1<\gamma(x)<{p}_{2_{s_2}}^*(x)$ for all $x\in\ol\Om$. Then,
        there exists a constant $C_\gamma=C_\gamma(s_i,p_i,N,\gamma,\Om)>0$ such that
        $$
        \|u\|_{L^{\gamma(\cdot)}(\Om)}\leq C_\gamma\|u\| \quad\mbox{for all } u\in X,
        $$
        moreover this embedding is compact.
    \end{lemma}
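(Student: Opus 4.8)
The plan is to leverage the chain of norm comparisons already established, namely $\|u\|_W\le\|u\|_{X_{p_2}}\le\|u\|_X=\|u\|$, together with the embedding results available for the space $W$. First I would fix $\gamma(\cdot)\in C_+(\ol\Om)$ with $1<\gamma(x)<p_{2_{s_2}}^*(x)$ for all $x\in\ol\Om$ and invoke Lemma \ref{Subcritical-embd} applied to $W=W^{s_2(\cdot),p_2(\cdot)}(\Om)$: since $s_2(\cdot)$, $p_2(\cdot)$ satisfy $(H_1)$--$(H_2)$ with $s_2(x,y)p_2(x,y)<N$ on $\ol\Om\times\ol\Om$, that lemma gives a continuous \emph{and compact} embedding $W\hookrightarrow L^{\gamma(\cdot)}(\Om)$, so there is a constant $\widetilde C_\gamma=\widetilde C_\gamma(s_2,p_2,N,\gamma,\Om)>0$ with $\|u\|_{L^{\gamma(\cdot)}(\Om)}\le \widetilde C_\gamma\|u\|_W$ for every $u\in W$.

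Next I would transfer this to $X$. Given $u\in X$, its restriction to $\Om$ together with the Gagliardo seminorm over $\R^{2N}\setminus(\mathcal C\Om)^2$ controls the seminorm over $\Om\times\Om$, and the $L^{\ol p_2(\cdot)}(\Om)$ part of $\|u\|_{X_{p_2}}$ is exactly the Lebesgue part of $\|u\|_W$; hence $u|_\Om\in W$ and $\|u\|_W\le\|u\|_{X_{p_2}}\le\|u\|$, as already recorded in the excerpt. Composing the two inequalities yields
\begin{equation*}
\|u\|_{L^{\gamma(\cdot)}(\Om)}\le \widetilde C_\gamma\|u\|_W\le \widetilde C_\gamma\|u\|
\end{equation*}
for all $u\in X$, which is the desired continuity with $C_\gamma:=\widetilde C_\gamma$. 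Note this argument also covers the critical-range statement implicitly needed elsewhere: if instead $\gamma(x)=p_{2_{s_2}}^*(x)$ at some points and $(H_7)$--$(H_8)$ hold, one replaces Lemma \ref{Subcritical-embd} by Lemma \ref{crit} to get the continuous (non-compact) embedding, but in the present statement $\gamma$ is strictly subcritical, so Lemma \ref{Subcritical-embd} suffices.

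For the compactness claim I would argue directly: let $(u_n)\subset X$ be bounded in $\|\cdot\|$. Then by $\|u_n\|_W\le\|u_n\|$ the sequence $(u_n|_\Om)$ is bounded in $W$, and since $\Om$ is a smooth bounded domain, the compact part of Lemma \ref{Subcritical-embd} gives a subsequence converging strongly in $L^{\gamma(\cdot)}(\Om)$. Because $X$ is reflexive (stated in the excerpt), we may also extract a weakly convergent subsequence in $X$, and the strong $L^{\gamma(\cdot)}$ limit must coincide with the weak limit's trace on $\Om$; hence the embedding $X\hookrightarrow L^{\gamma(\cdot)}(\Om)$ is compact. The only mild subtlety — and the one place I would be careful — is the passage $\|u\|_W\le\|u\|_{X_{p_2}}$: one must check that the Gagliardo seminorm of $u$ over $\Om\times\Om$ is dominated by its seminorm over the larger set $\R^{2N}\setminus(\mathcal C\Om)^2$ (immediate, since $\Om\times\Om\subset\R^{2N}\setminus(\mathcal C\Om)^2$ and the integrand is nonnegative), and that the infimum defining $[\,\cdot\,]_{s_2(\cdot),p_2(\cdot),\Om}$ is therefore no larger than the one over the bigger domain; this is routine but is the crux of why everything reduces to the already-known results for $W$.
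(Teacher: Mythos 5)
Your argument is correct and is essentially the paper's own proof: the authors simply note the norm chain $\|u\|_W\leq\|u\|_{X_{p_2}}\leq\|u\|_X$ and then invoke Lemma \ref{Subcritical-embd} for $W$, exactly as you do, with the compactness of $X\hookrightarrow L^{\gamma(\cdot)}(\Om)$ inherited from the compactness of $W\hookrightarrow L^{\gamma(\cdot)}(\Om)$ applied to a bounded sequence. Your extra care about the monotonicity of the Gagliardo seminorm in the integration domain is the right (and routine) justification of that norm chain; nothing further is needed.
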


    \begin{lemma}\label{critical}
        Let $\Om$ be a smooth bounded domain in $\R^N$. Let $s_i(\cdot)$ and $p_i(\cdot)$ satisfy
        $(H_1)$-$(H_2)$ and $(H_4)$, for $i\in \{1,2\}$,  with $s_2(x,y)p_2(x,y)<N$ for any
        $(x,y)\in\overline{\Omega}\times\overline{\Omega}$. Also, let $\beta$ satisfy $(H_7)$. Assume that $\alpha(\cdot)\in C_+(\Om)$ satisfies $(H_8)$. Then, there exists a constant $S=S(N,s_i,p_i,\al,\Om)>0$ such that
        $$
            \|u\|_{L^{\alpha(\cdot)}(\Omega)}\leq S\|u\| \quad\mbox{for all } u\in X.
        $$
        That is, the embedding $X\hookrightarrow
        L^{\alpha(\cdot)}(\Omega)$ is continuous.
    \end{lemma}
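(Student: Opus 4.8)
The plan is to obtain Lemma~\ref{critical} as an essentially immediate consequence of the critical embedding $W\hookrightarrow L^{\alpha(\cdot)}(\Omega)$ of Lemma~\ref{crit}, combined with the comparison between the norm $\|\cdot\|_W$ and the norm $\|\cdot\|$ of $X$. First I would check that, under $(H_1)$-$(H_2)$ with $s_2(x,y)p_2(x,y)<N$ on $\overline\Omega\times\overline\Omega$, the log-H\"{o}lder continuity condition $(H_{7})$ on $p_2(\cdot)$, and the assumption $(H_{8})$ on $\alpha(\cdot)$, all the hypotheses required by Lemma~\ref{crit} are in force; this produces a constant $C_\alpha=C_\alpha(N,s_i,p_i,\alpha,\Omega)>0$ such that $\|v\|_{L^{\alpha(\cdot)}(\Omega)}\le C_\alpha\|v\|_W$ for every $v\in W=W^{s_2(\cdot),p_2(\cdot)}(\Omega)$.

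Next, for $u\in X$ I would observe that its restriction to $\Omega$ belongs to $W$ and that $\|u\|_W\le\|u\|$. This is the content of the chain $\|u\|_W\le\|u\|_{X_{p_2}}\le\|u\|_X$ recorded just before Lemma~\ref{embd-X}, together with $\|u\|_X=\|u\|$. The first inequality in that chain rests on the monotonicity of the Gagliardo-type seminorm with respect to its domain of integration: since $\mathcal{C}\Omega=\R^N\setminus\Omega$ one has $\Omega\times\Omega\subset\R^{2N}\setminus(\mathcal{C}\Omega)^2$, so integrating the nonnegative integrand over the larger set gives $[u]_{s_2(\cdot),p_2(\cdot),\Omega}\le[u]_{s_2(\cdot),p_2(\cdot),\R^{2N}\setminus(\mathcal{C}\Omega)^2}$ (and likewise for the associated Luxemburg infima), while the $L^{\overline{p}_2(\cdot)}(\Omega)$ parts of the two norms agree and the term on $\mathcal{C}\Omega$ only adds; the second inequality follows from the additivity $\rho=\rho_{p_1}+\rho_{p_2}\ge\rho_{p_2}$ of the combined modular (equivalently, $\|u\|_{X_{p_2}}\le\|u\|_{X_{p_1}}+\|u\|_{X_{p_2}}$) and the equivalence of the additive and modular norms of $X$. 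Chaining with the critical embedding,
\[
\|u\|_{L^{\alpha(\cdot)}(\Omega)}\le C_\alpha\|u\|_W\le C_\alpha\|u\|,
\]
so the estimate holds with $S:=C_\alpha$, and the continuity of $X\hookrightarrow L^{\alpha(\cdot)}(\Omega)$ is then immediate.

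I do not expect any genuinely hard step here: the entire analytic difficulty — the critical Sobolev estimate, proved through a concentration--compactness principle — is already packaged in Lemma~\ref{crit}, hence in \cite[Theorem 3.3]{Ho-Conc} and \cite{zuo}. The points demanding care are purely of bookkeeping nature: confirming that $\Omega\times\Omega\subset\R^{2N}\setminus(\mathcal{C}\Omega)^2$ so that the seminorm is monotone in its domain, keeping track of the passage between the additive norm $|\cdot|_{X_{p_i}}$ and the modular norm $\|\cdot\|_{X_{p_i}}$, and reading the hypotheses of the statement in the roles actually needed by Lemma~\ref{crit} (in particular $\beta$ under $(H_3)$, $p_2$ under $(H_{7})$ and $\alpha$ under $(H_{8})$).
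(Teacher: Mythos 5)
Your proposal is correct and follows exactly the route the paper takes: the paper records the chain $\|u\|_W\le\|u\|_{X_{p_2}}\le\|u\|_X$ just before Lemma \ref{embd-X} and then obtains Lemma \ref{critical} directly from Lemma \ref{crit}, which is precisely your argument (with the added, and welcome, justification of the seminorm monotonicity via $\Omega\times\Omega\subset\R^{2N}\setminus(\mathcal{C}\Omega)^2$). Your reading of the hypotheses, with $\beta$ under $(H_3)$ and $p_2$ under $(H_7)$, is also the intended one despite the mislabelling in the statement.
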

\noindent   Denoting with $X^*$ the topological dual of $X$ and with $\langle\cdot,\cdot\rangle$ the dual pairing between $X$ and $X^*$, we can establish some properties of the combined modular function $\rho$ given in \eqref{rhomod} and its derivative.
The proof of the next result is similar to the one of \cite[Lemma 3.4]{BRW}, just noticing that, both quantities $\R^{2N}\setminus (\mathcal{C}\Omega)^2$ and $\Omega\times\Omega$ play the symmetrical role.

    \begin{lemma}\label{s+}
        Let hypotheses $(H_1)$-$(H_3)$ be satisfied. Then $\rho:X \to\mathbb R$ and $\rho':X\to X^*$ have the following properties:
        \begin{itemize}
            \item [$(i)$] The function $\rho$ is of class $C^1(X,\mathbb R)$ and $\rho':X\to X^*$ is coercive, that is, $$\frac{\langle\rho'(u), u\rangle}{\|u\|}\to\infty\text{\;\;as\;}\|u\|\to\infty.$$
            \item[$(ii)$] $\rho'$ is strictly monotone operator.
            \item[$(iii)$] $\rho'$  is a mapping of type $(S_+)$, that is, if $u_n\rightharpoonup u$ in $X$ and $\displaystyle\limsup_{n\to\infty} \langle\rho'(u_n), u_n-u\rangle\leq0$, then $u_n\to u$ strongly in $X$.
        \end{itemize}
    \end{lemma}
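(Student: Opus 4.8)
The plan is to treat the combined modular $\rho=\rho_{p_1}+\rho_{p_2}$ as a sum of two modulars, each of which splits further into a Gagliardo-type double-integral piece over $\R^{2N}\setminus(\mathcal{C}\Omega)^2$ and two weighted Lebesgue pieces over $\Omega$ and $\mathcal{C}\Omega$. Since sums of $C^1$ functionals are $C^1$, sums of coercive operators are coercive, sums of strictly monotone operators are strictly monotone, and sums of $(S_+)$ operators are again $(S_+)$ (because the limsup of a sum being $\le 0$ forces, via monotonicity, each summand's limsup to be $\le 0$), it suffices to establish all three properties for a single generic summand; I would then invoke \cite[Lemma 3.4]{BRW} in spirit, observing as the paper already remarks that $\R^{2N}\setminus(\mathcal{C}\Omega)^2$ and $\Omega\times\Omega$ enter symmetrically, so no new phenomenon arises from the Robin boundary term.

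For $(i)$: differentiability of the double-integral term is the standard computation for the fractional $p(\cdot)$-Laplacian — one writes the Gâteaux derivative
$$
\langle\rho_{p_i}'(u),v\rangle=\iint_{\R^{2N}\setminus(\mathcal{C}\Omega)^2}\frac{|u(x)-u(y)|^{p_i(x,y)-2}(u(x)-u(y))(v(x)-v(y))}{|x-y|^{N+s_i(x,y)p_i(x,y)}}\,dx\,dy + \int_\Omega |u|^{\overline p_i(x)-2}uv\,dx + \int_{\mathcal{C}\Omega}\beta(x)|u|^{\overline p_i(x)-2}uv\,dx,
$$
and checks via the mean value theorem, the elementary inequality $\big||t|^{p-2}t-|\tau|^{p-2}\tau\big|\le c_p(|t|+|\tau|)^{p-2}|t-\tau|$, the bound $1<p_i^-\le p_i^+<\infty$, Hölder's inequality in variable exponent spaces (Lemma \ref{Holder}) and the embeddings of Lemma \ref{embd-X} that $v\mapsto\langle\rho_{p_i}'(u),v\rangle$ is a bounded linear functional and that $\rho_{p_i}':X\to X^*$ is continuous; coercivity of $\langle\rho'(u),u\rangle/\|u\|$ follows from $\langle\rho_{p_i}'(u),u\rangle=\rho_{p_i}(u)$ together with Proposition \ref{norm-modular}$(iii)$, which gives $\rho(u)\ge\|u\|^{p_1^-}$ for $\|u\|>1$, hence $\langle\rho'(u),u\rangle/\|u\|\ge\|u\|^{p_1^--1}\to\infty$. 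For $(ii)$: strict monotonicity reduces to the strict convexity of $t\mapsto|t|^{p}$ for $p>1$; concretely, for $u\ne v$ one has $\langle\rho'(u)-\rho'(v),u-v\rangle>0$ because the integrand of each piece is pointwise nonnegative via the classical Simon-type inequalities for $|\cdot|^{p-2}(\cdot)$ and is strictly positive on a set of positive measure (for the double integral, on a set of positive measure in $\R^{2N}$ where $u(x)-u(y)\ne v(x)-v(y)$; for the Lebesgue pieces, where $u\ne v$), so that $\langle\rho'(u)-\rho'(v),u-v\rangle=0$ forces $u=v$.

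The main obstacle is $(iii)$, the $(S_+)$ property, and I expect to handle it exactly as in the local/Dirichlet prototypes: assume $u_n\rightharpoonup u$ in $X$ with $\limsup_n\langle\rho'(u_n),u_n-u\rangle\le 0$. By the weak convergence and continuity of $\rho'$ (from $(i)$), $\langle\rho'(u),u_n-u\rangle\to 0$, so $\limsup_n\langle\rho'(u_n)-\rho'(u),u_n-u\rangle\le 0$; monotonicity from $(ii)$ forces this limsup to be $0$, and the same for each of the two summands $\rho_{p_1}',\rho_{p_2}'$ separately. Then I would use the Simon inequalities to bound $\rho_{p_i}$-type distances by the monotonicity pairing: for $p_i(x,y)\ge 2$ one gets a direct lower bound of the pairing by $c\,|u_n(x)-u_n(y)-(u(x)-u(y))|^{p_i(x,y)}/|x-y|^{N+s_ip_i}$ and similarly for the Lebesgue pieces, while for $1<p_i(x,y)<2$ one splits the domain and applies the reverse Hölder trick (Young's inequality with the bounded conjugate pair, using $\|u_n\|$ bounded) to conclude the Gagliardo seminorm and $L^{\overline p_i(\cdot)}$ pieces of $u_n-u$ tend to $0$ in modular, hence $\|u_n-u\|\to 0$ by Proposition \ref{norm-modular} and Proposition \ref{norm-mod}$(v)$. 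The delicate point is keeping the variable-exponent bookkeeping clean on the two sets $\{1<p_i<2\}$ and $\{p_i\ge 2\}$ simultaneously in $x,y$; since this is routine given the stated lemmas, I would simply reference \cite[Lemma 3.4]{BRW} for the full details and point out the only modification needed is the symmetric treatment of $\R^{2N}\setminus(\mathcal{C}\Omega)^2$ versus $\Omega\times\Omega$, already flagged before the statement.
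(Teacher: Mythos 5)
Your proposal is correct and follows essentially the same route as the paper, which likewise disposes of the lemma by reducing it to \cite[Lemma 3.4]{BRW} after observing that $\R^{2N}\setminus(\mathcal{C}\Omega)^2$ and $\Omega\times\Omega$ play symmetric roles. The only slip is cosmetic: since $\rho$ is defined without the $1/p_i(x,y)$ weights, its actual derivative carries factors $p_i(x,y)$ and $\overline p_i(x)$ in the integrands, but as $1<p_1^-\le p_i\le p_2^+<\infty$ this changes nothing in your coercivity, monotonicity, or $(S_+)$ arguments.
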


    We conclude this section introducing the variational setting for \eqref{eq1} and \eqref{eq2}.
    As proved in \cite[Proposition 3.6]{BRW}, the following integration by parts formula arises naturally for bounded  $C^2$-
    functions $u$ and $\varphi$ in $\mathbb{R}^{N}$
    \begin{align*}
        &\frac{1}{2}\iint_{\R^{2N}\setminus(\mathcal{C}\Omega)^2}\frac{|u(x)-u(y)|^{p_i(x,y)-2}(u(x)-u(y))(\varphi(x)-\varphi(y))}{|x-y|^{N+s_i(x,y)p_i(x,y)}}\,dx\,dy\nonumber\\
        &=\int_{\Omega}\varphi(-\Delta)^{s_i(\cdot)}_{p_i(\cdot)}u\,dx
        +\int_{\mathcal{C}\Omega}\varphi\mathcal{N}^{s_i(\cdot)}_{p_i(\cdot)}u\, dx,
    \end{align*}
    for $i\in\{1,2\}$.
    The above integration by parts formula leads to the following definitions.
    \begin{definition}
        We say that $w\in X$ is a weak solution to \eqref{eq1} if for any $\varphi\in X$, we have
        \begin{align*}
            &\mathcal{L}_\la(w,\varphi)\nonumber\\&:=\frac{1}{2}\iint_{\R^{2N}\setminus(\mathcal{C}\Omega)^2}\frac{|w(x)-w(y)|^{p_1(x,y)-2}(w(x)-w(y))(\varphi(x)-\varphi(y))}{|x-y|^{N+s_1(x,y)p_1(x,y)}}dxdy
            + \int_{\Omega}|w|^{\overline{p}_1(x)-2}w\varphi dx \nonumber\\
            &\qquad+\frac{1}{2}\iint_{\R^{2N}\setminus(\mathcal{C}\Omega)^2}\frac{|w(x)-w(y)|^{p_2(x,y)-2}(w(x)-w(y))(\varphi(x)-\varphi(y))}{|x-y|^{N+s_2(x,y)p_2(x,y)}}dxdy+
            \int_{\Omega}|w|^{\overline{p}_2(x)-2}w\varphi dx\nonumber\\
            &\qquad+\int_{\mathcal{C}\Omega} \beta(x)|w|^{\overline{p}_1(x)-2}w\varphi\,dx++\int_{\mathcal{C}\Omega} \beta(x)|w|^{\overline{p}_2(x)-2}w\varphi\,dx\nonumber\\
            &\qquad-\la\int_{\Om} b(x)|w|^{\al(x)-2}w\varphi dx- \int_{\Om}a(x)|w|^{r(x)-2}w\log|w|\varphi dx=0.
        \end{align*}
    \end{definition}
    \begin{definition}
        We say that $v\in X$ is a weak solution to \eqref{eq2} if for any $\varphi\in X$, we have
        \begin{align*}
            &\mathcal{T}_\mu(v,\varphi)\nonumber\\&:=\frac{1}{2}\iint_{\R^{2N}\setminus(\mathcal{C}\Omega)^2}\frac{|v(x)-v(y)|^{p_1(x,y)-2}(v(x)-v(y))(\varphi(x)-\varphi(y))}{|x-y|^{N+s_1(x,y)p_1(x,y)}}dxdy
            + \int_{\Omega}|v|^{\overline{p}_1(x)-2}v\varphi dx \nonumber\\
            &\qquad+\frac{1}{2}\iint_{\R^{2N}\setminus(\mathcal{C}\Omega)^2}\frac{|v(x)-v(y)|^{p_2(x,y)-2}(v(x)-v(y))(\varphi(x)-\varphi(y))}{|x-y|^{N+s_2(x,y)p_2(x,y)}}dxdy+
            \int_{\Omega}|v|^{\overline{p}_2(x)-2}v  \varphi dx\nonumber\\
            &\qquad+\int_{\mathcal{C}\Omega} \beta(x)|v|^{\overline{p}_1(x)-2}v \varphi\,dx+\int_{\mathcal{C}\Omega} \beta(x)|v|^{\overline{p}_2(x)-2}v \varphi\,dx\nonumber\\
            &\qquad-\int_{\Om} b(x)|v|^{\al(x)-2}v \varphi dx-\mu  \int_{\Om}a(x)|v|^{r(x)-2}v\log|v|\varphi dx \\
            &\qquad- \mu \int_{\Om} c(x)|v|^{\eta(x)-2}v \varphi dx=0.
        \end{align*}
    \end{definition}

    Then, the problems taken into account in the present paper have  variational structure, namely, the  solutions to the each problem can be found as critical points of the associated  energy functional.

    The energy functional associated with problem \eqref{eq1} is the functional $J_\la\colon X\to \R$ given by

    \begin{align}\label{eng1}
        J_{\la}(u) & := \frac{1}{2}\iint_{\R^{2N}\setminus(\mathcal{C}\Omega)^2}\frac{|u(x)-u(y)|^{p_1(x,y)}}{p_1(x,y)|x-y|^{N+s_1(x,y)p_1(x,y)}}dxdy+
        \int_{\Omega}\frac{|u|^{\overline{p}_1(x)}}{\overline{p
            }_1(x)}dx\nonumber\\
        &\qquad+\frac{1}{2}\iint_{\R^{2N}\setminus(\mathcal{C}\Omega)^2}\frac{|u(x)-u(y)|^{p_2(x,y)}}{p_2(x,y)|x-y|^{N+s_2(x,y)p_2(x,y)}}dxdy+
        \int_{\Omega}\frac{|u|^{\overline{p}_2(x)}}{\overline{p}_2(x)}dx\nonumber\\
        &\qquad+\int_{\mathcal{C}\Omega} \frac{\beta(x)|u|^{\overline{p}_1(x)}}{\overline{p}_1(x)}\,dx+\int_{\mathcal{C}\Omega} \frac{\beta(x)|u|^{\overline{p}_2(x)}}{\overline{p}_2(x)}\,dx\nonumber\\
        &\qquad-\la  \int_{\Omega}\frac{b(x)}{\al(x)}| u|^{\al(x)} dx- \int_{\Omega} \frac{a(x)}{r(x)} | u|^{r(x)}\log|u| dx
        + \int_{\Omega}\frac{a(x)}{r(x)^2}|u|^{r(x)} dx.
    \end{align}
    While, the energy functional associated with problem \eqref{eq2} is the functional $J_\mu\colon X\to \R$, defined by

    \begin{align}\label{eng2}
        J_{\mu}(u) & := \frac{1}{2}\iint_{\R^{2N}\setminus(\mathcal{C}\Omega)^2}\frac{|u(x)-u(y)|^{p_1(x,y)}}{p_1(x,y)|x-y|^{N+s_1(x,y)p_1(x,y)}}dxdy+
        \int_{\Omega}\frac{|u|^{\overline{p}_1(x)}}{\overline{p
            }_1(x)}dx\nonumber\\
        &\qquad+\frac{1}{2}\iint_{\R^{2N}\setminus(\mathcal{C}\Omega)^2}\frac{|u(x)-u(y)|^{p_2(x,y)}}{p_2(x,y)|x-y|^{N+s_2(x,y)p_2(x,y)}}dxdy+
        \int_{\Omega}\frac{|u|^{\overline{p}_2(x)}}{\overline{p}_2(x)}dx\nonumber\\
        &\qquad+\int_{\mathcal{C}\Omega} \frac{\beta(x)|u|^{\overline{p}_1(x)}}{\overline{p}_1(x)}\,dx+\int_{\mathcal{C}\Omega} \frac{\beta(x)|u|^{\overline{p}_2(x)}}{\overline{p}_2(x)}\,dx\nonumber\\
        &\qquad- \int_{\Omega}\frac{b(x)}{\al(x)}| u|^{\al(x)} dx- \mu \int_{\Omega} \frac{a(x)}{r(x)} | u|^{r(x)}\log|u| dx +\mu \int_{\Omega}\frac{a(x)}{r(x)^2}| u|^{r(x)} dx \\
        &\qquad-\mu \int_{\Omega}\frac{c(x)}{\eta(x)}|u|^{\eta(x)} dx.
    \end{align}
    Invoking Lemma \ref{logm} and using a direct computation from \cite[Proposition 3.8]{BRW}, we show that the functionals $J_{\la}$  and $J_{\mu}$ are well defined on $X$ and $J_\la, J_{\mu} \in C^1(X,\R)$ with
    $$
    \langle J_{\la}^{'}(w),\varphi\rangle=\mathcal{L }_\la(w,\varphi)\quad \text{for any}\quad \varphi\in X;
    $$
    $$
    \langle J_{\mu}^{'}(v),\varphi\rangle=\mathcal{T }_\mu(v,\varphi)\quad \text{for any}\quad \varphi\in X.
    $$
    Thus, the weak solutions of \eqref{eq1} and \eqref{eq2} are precisely the critical points of
    $J_{\la}$ and $J_\mu,$ respectively.

    \section{Subcritical case}\label{sec4}
    In this section, we deal with the subcritical problem \eqref{eq1}. Precisely, we
    give the proofs of Theorem \ref{mainthm} and  Theorem \ref{mainthm-2}.
    For this, we first need some technical
    lemmas to handle the logarithmic nonlinearity.
    \begin{lemma}\label{log-in}
        For every $\sigma>0$, we have:
            \begin{itemize}
            \item [$(i)$] $t^{\sigma} |\log (t)|\leq \displaystyle\frac{1}{e\sigma}$ for all $t\in (0,1];$
          \item [$(ii)$] $\log (t)\leq \displaystyle\frac{t^{\sigma}}{e\sigma}$ for all $t>1.$
            \end{itemize}
    \end{lemma}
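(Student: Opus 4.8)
The plan is to reduce both inequalities to the single elementary estimate
$$\log s\leq \frac{s}{e}\qquad\text{for all }s>0,$$
with equality precisely at $s=e$. This is immediate from a one-variable calculus argument: setting $g(s):=s/e-\log s$ on $(0,\infty)$, one has $g'(s)=1/e-1/s$, which vanishes only at $s=e$, and $g''(s)=1/s^2>0$, so $s=e$ is the global minimum; since $g(e)=1-1=0$, we conclude $g(s)\geq0$ everywhere, i.e. the displayed inequality.

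For part $(ii)$, given $t>1$ and $\sigma>0$, I would write $\log t=\frac{1}{\sigma}\log(t^{\sigma})$ and apply the inequality above with $s=t^{\sigma}>1$, which yields
$$\log t=\frac{1}{\sigma}\log(t^{\sigma})\leq\frac{1}{\sigma}\cdot\frac{t^{\sigma}}{e}=\frac{t^{\sigma}}{e\sigma},$$
exactly the claim.

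For part $(i)$, the key observation is that it is equivalent to $(ii)$ under the reciprocal substitution. For $t\in(0,1]$ put $\tau:=1/t\in[1,\infty)$; then $t^{\sigma}|\log t|=\tau^{-\sigma}\log\tau$, so the asserted bound $t^{\sigma}|\log t|\leq\frac{1}{e\sigma}$ becomes $\log\tau\leq\frac{\tau^{\sigma}}{e\sigma}$, which holds for $\tau>1$ by $(ii)$ and trivially for $\tau=1$ (both sides satisfy $0\leq\frac{1}{e\sigma}$). Alternatively one can argue directly: the function $f(t):=-t^{\sigma}\log t$ is continuous on $(0,1]$ with $f(1)=0$ and $f(0^{+})=0$, and $f'(t)=-t^{\sigma-1}(\sigma\log t+1)$ vanishes only at $t_{*}=e^{-1/\sigma}$, where $f(t_{*})=e^{-1}/\sigma=\frac{1}{e\sigma}$; hence $\max_{(0,1]}f=\frac{1}{e\sigma}$.

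There is essentially no serious obstacle here, since the statement is a purely elementary estimate; the only mild point of care is the limit $t^{\sigma}\log t\to0$ as $t\to0^{+}$ needed in the direct argument for $(i)$, and this is avoided altogether by the reciprocal substitution reducing $(i)$ to $(ii)$.
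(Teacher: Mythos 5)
Your proof is correct. The paper handles the two parts separately: for $(i)$ it uses exactly your ``direct argument'' (maximize $t\mapsto t^{\sigma}|\log t|$ on $(0,1]$ at $t_{0}=e^{-1/\sigma}$), and for $(ii)$ it simply defers to a cited lemma of Xiang--Hu--Yang rather than proving anything. Your primary route is more unified and fully self-contained: you derive both parts from the single inequality $\log s\leq s/e$ (with equality at $s=e$), obtaining $(ii)$ by the substitution $s=t^{\sigma}$ and $(i)$ by the reciprocal substitution $\tau=1/t$, which correctly turns $t^{\sigma}|\log t|\leq\frac{1}{e\sigma}$ into $\log\tau\leq\frac{\tau^{\sigma}}{e\sigma}$ and also explains why both bounds are sharp at the same point $t^{\sigma}=e^{\pm1}$. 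All the calculus details check out, including the boundary case $\tau=1$ and the limit $t^{\sigma}\log t\to0$ as $t\to0^{+}$ in the alternative argument; nothing is missing.
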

    \begin{proof}
        $(i)$ Considering that $t\rightarrow t^{\sigma} |\log (t)| $ is a continuous function on
        $(0,1]$ with $\displaystyle \lim_{t \rightarrow 0} t^{\sigma} |\log
        (t)|=0$, which achieves the maximum at $t_0=e^{-1/\sigma}$, we can easily conclude.

        \vspace{0.05cm}
        \noindent
        $(ii)$ The second result is similar to that in \cite[Lemma 2.1]{Mingqi}.
    \end{proof}
    \begin{lemma}\label{logm}
        Let $(H_1)$-$(H_2)$ hold and let $r\in C_+(\Om)$ with $1<r^-<r^+<p_{2_{s_2}}^*(x)$ for all $x\in\Om$. Then
        \begin{align*} \int_{\Om}\frac{a(x)}{r(x)}|u|^{r(x)}
            \log |u| dx \leq C\|a\|_{L^\infty(\Om)}\max\left\{\|u\|^{r^-},\|u\|^{r^+}\right\}+\log
            \|u\|\int_{\Om}\frac{a(x)}{r(x)}|u|^{r(x)} dx, \ \ \forall u\in X\setminus{\{0\}},\end{align*} where
        $C=C(|\Om|,r,p_{2_{s_2}}^{*})>0$ is a suitable constant.
    \end{lemma}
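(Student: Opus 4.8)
The plan is to split the integral $\int_\Om \frac{a(x)}{r(x)}|u|^{r(x)}\log|u|\,dx$ according to whether $|u(x)|$ is small or large relative to $\|u\|$, and to use Lemma~\ref{log-in} on each piece. Fix $u\in X\setminus\{0\}$ and write $v=u/\|u\|$, so that $\log|u(x)| = \log|v(x)| + \log\|u\|$. Substituting this decomposition gives
$$
\int_{\Om}\frac{a(x)}{r(x)}|u|^{r(x)}\log|u|\,dx
= \int_{\Om}\frac{a(x)}{r(x)}|u|^{r(x)}\log|v|\,dx
+ \log\|u\| \int_{\Om}\frac{a(x)}{r(x)}|u|^{r(x)}\,dx,
$$
so the second term on the right is exactly the one appearing in the statement, and it remains to bound $\int_{\Om}\frac{a(x)}{r(x)}|u|^{r(x)}\log|v|\,dx$ by $C\|a\|_{L^\infty(\Om)}\max\{\|u\|^{r^-},\|u\|^{r^+}\}$.

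\textbf{Step 1: controlling the logarithmic factor.} On the set $\{|v|\le 1\}$ we have $\log|v|\le 0$, so that part of the integral is nonpositive (here $a\ge 0$ by $(H_4)$), hence bounded above by $0$. On the set $\{|v|>1\}$ we apply Lemma~\ref{log-in}(ii) with a small parameter $\sigma>0$ chosen so that $r^+ + \sigma < p_{2_{s_2}}^*(x)$ for all $x\in\Om$ — this is possible because $r^+ < (p_{2_{s_2}}^*)^-$ by hypothesis and $p_{2_{s_2}}^*$ is continuous on the compact set $\overline\Om$. This yields $\log|v(x)| \le \frac{|v(x)|^{\sigma}}{e\sigma}$ pointwise on $\{|v|>1\}$, and therefore
$$
\int_{\Om}\frac{a(x)}{r(x)}|u|^{r(x)}\log|v|\,dx
\le \frac{\|a\|_{L^\infty(\Om)}}{e\sigma\, r^-}\,\|u\|^{r(x)}\!\!\int_{\{|v|>1\}} |v(x)|^{r(x)+\sigma}\,dx,
$$
where I have pulled out $|u|^{r(x)} = \|u\|^{r(x)}|v|^{r(x)}$; a little care is needed with the factor $\|u\|^{r(x)}$, which I bound by $\max\{\|u\|^{r^-},\|u\|^{r^+}\}$ after noting $\|v\|=1$. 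More cleanly, one keeps $|u|^{r(x)}$ intact and uses $|v|^{\sigma} = |u|^{\sigma}/\|u\|^{\sigma}$; in either bookkeeping the outcome is the same up to renaming constants.

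\textbf{Step 2: absorbing the remaining integral via the embedding.} It remains to estimate $\int_{\Om}|v|^{r(x)+\sigma}\,dx = \rho_\Om^{r(\cdot)+\sigma}(v)$. Since $r(\cdot)+\sigma \in C_+(\Om)$ with $1 < (r+\sigma)^- \le (r+\sigma)^+ < p_{2_{s_2}}^*(x)$ by the choice of $\sigma$, Lemma~\ref{embd-X} gives the continuous embedding $X\hookrightarrow L^{r(\cdot)+\sigma}(\Om)$, so $\|v\|_{L^{r(\cdot)+\sigma}(\Om)} \le C_{r+\sigma}\|v\| = C_{r+\sigma}$. Then Proposition~\ref{norm-mod}(iii)--(iv) bounds the modular $\rho_\Om^{r(\cdot)+\sigma}(v)$ by $\max\{C_{r+\sigma}^{(r+\sigma)^-}, C_{r+\sigma}^{(r+\sigma)^+}\}$, a constant depending only on $|\Om|$, $r$ and $p_{2_{s_2}}^*$ (through the choice of $\sigma$ and the embedding constant). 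Collecting the two steps and absorbing all dimensional constants into a single $C = C(|\Om|,r,p_{2_{s_2}}^*)>0$ yields
$$
\int_{\Om}\frac{a(x)}{r(x)}|u|^{r(x)}\log|v|\,dx \le C\,\|a\|_{L^\infty(\Om)}\max\{\|u\|^{r^-},\|u\|^{r^+}\},
$$
which together with the decomposition of Step~0 completes the proof.

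\textbf{Main obstacle.} The only genuinely delicate point is the correct handling of the factor $\|u\|^{r(x)}$ when $r$ is non-constant: one cannot simply factor $|u|^{r(x)} = \|u\|^{r(x)}|v|^{r(x)}$ and treat $\|u\|^{r(x)}$ as a constant, so one must track whether $\|u\| \ge 1$ or $\|u\| < 1$ and bound $\|u\|^{r(x)}$ by $\max\{\|u\|^{r^-},\|u\|^{r^+}\}$ uniformly — this is exactly why the $\max$ appears in the statement rather than a single power. Everything else (the splitting by the size of $|v|$, the application of Lemma~\ref{log-in}, and the embedding estimate) is routine.
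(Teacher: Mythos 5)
Your proof is correct and follows essentially the same route as the paper's: the same decomposition $\log|u|=\log\frac{|u|}{\|u\|}+\log\|u\|$, the same splitting of $\Omega$ according to whether $|u|/\|u\|$ exceeds $1$, and the same combination of Lemma \ref{log-in}(ii) with a subcritical embedding on the set where $|u|/\|u\|>1$. The only (harmless) deviations are that you discard the small-set contribution outright using $a\geq 0$, whereas the paper estimates it via Lemma \ref{log-in}(i) with $\sigma=r^-$ (so its version does not rely on the sign of $a$), and that you embed into the variable-exponent space $L^{r(\cdot)+\sigma}(\Omega)$ rather than reducing to the constant-exponent space $L^{p_{2_{s_2}}^{*-}-\epsilon}(\Omega)$ as the paper does.
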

    \begin{proof}
       Let $u\in X\setminus{\{0\}}$. Set $\Om_1 := \{x \in\Om : |u(x)| \leq \|u\|\}$ and $\Om_2 := \{x \in\Om : |u(x)| \geq \|u\|\}$. Then
        $$
        \int_\Om
        \frac{a(x)}{r(x)}|u|^{r(x)} \log \frac{|u|}{\|u\|} dx = \int_{\Om_1}
        \frac{a(x)}{r(x)}|u|^{r(x)} \log \frac{|u|}{\|u\|} dx+\int_{\Om_2}
        \frac{a(x)}{r(x)}|u|^{r(x)} \log \frac{|u|}{\|u\|} dx.
        $$
        Let us simplify the
        first integration. Then, using Lemma \ref{log-in}$(i)$ with $\sigma=r^-$, we get
        \begin{equation}
        \begin{aligned}\label{1}
            \int_{\Om_1}\frac{a(x)}{r(x)}|u|^{r(x)} \log \frac{|u|}{\|u\|} dx
            &\leq\frac{\|a\|_{L^{\infty}(\Om)}}{r^-}\max\left\{\|u\|^{r^-},\|u\|^{r^+}\right\}\int_{\Om_1}
            \left(\frac{|u|}{\|u\|}\right)^{r(x)} \left|\log \frac{|u|}{\|u\|}\right| dx\\
            &\leq \frac{\|a\|_{L^{\infty}(\Om)}|\Om|}{e(r^-)^2}\max\left\{\|u\|^{r^-},\|u\|^{r^+}\right\}.
        \end{aligned}
        \end{equation}

        Next, we calculate the second
        integral expression. For that, using Lemma \ref{log-in}$(ii)$  with
        $\sigma = (p_{2_{s_2}}^*)^--\epsilon -r^+$, for some sufficiently small $\e>0$, and using Lemma \ref{embd-X}, we obtain
        \begin{equation}
        \begin{aligned}\label{2}
            \int_{\Om_2}\frac{a(x)}{r(x)}|u|^{r(x)} \log \frac{|u|}{\|u\|} dx &\leq
            \frac{\|a\|_{L^{\infty}(\Om)}}{r^-}\int_{\Om_2} |u|^{r(x)} \log \frac{|u|}{\|u\|}
            dx\\
            &\leq \frac{\|a\|_{L^{\infty}(\Om)}}{e(p_{2_{s_2}}^{*-}-\epsilon -r^+)r^-}\int_{\Om_2} |u|^{r(x)} \left(
            \frac{|u|}{\|u\|}\right)^{p_{2_{s_2}}^{*-}-\epsilon -r^+}
            dx\\
            &\leq \frac{\|a\|_{L^{\infty}(\Om)}}{e(p_{2_{s_2}}^{*-}-\epsilon -r^+)r^-}\int_{\Om_2} |u|^{r(x)} \left(
            \frac{|u|}{\|u\|}\right)^{p_{2_{s_2}}^{*-}-\epsilon -r(x)}
            dx\\
            &\leq \frac{\|a\|_{L^{\infty}(\Om)}}{ \min(\|u\|^{p_{2_{s_{2}}}^{\ast -}-\epsilon
                    -r^{-}},\|u\|^{p_{2_{s_{2}}}^{\ast -}-\epsilon
                    -r^{+}}) e(p_{2_{s_2}}^{*-}-\epsilon -r^+)r^-}\int_{\Om_2}
            |u|^{p_{2_{s_2}}^{*-}-\epsilon }
            dx\\
            &\leq \frac{C_{p_{2_{s_2}}^{*-}-\epsilon}\|a\|_{L^{\infty}(\Om)}}{\min(\|u\|^{p_{2_{s_{2}}}^{\ast -}-\epsilon
                    -r^{-}},\|u\|^{p_{2_{s_{2}}}^{\ast -}-\epsilon
                    -r^{+}}) e(p_{2_{s_2}}^{*-}-\epsilon -r^+)r^-}\|u\|^{p_{2_{s_2}}^{*-}-\epsilon }
            \\
            &= \frac{C_{p_{2_{s_2}}^{*-}-\epsilon}\|a\|_{L^{\infty}(\Om)}}{e(p_{2_{s_2}}^{*-}-\epsilon -r^+)r^-}\min(
            \|u\|^{r^{-}},\|u\|^{r^{+}})\\
            &\leq \frac{C_{p_{2_{s_2}}^{*-}-\epsilon}\|a\|_{L^{\infty}(\Om)}}{e(p_{2_{s_2}}^{*-}-\epsilon -r^+)r^-}\max(
            \|u\|^{r^{-}},\|u\|^{r^{+}}),
        \end{aligned}
        \end{equation}
        where $C_{p_{2_{s_2}}^{*-}-\epsilon}>0$ is given by Lemma \ref{embd-X}.
        Combining \eqref{1} and \eqref{2}, we get the result.
        This ends the proof.
    \end{proof}
    \subsection{Existence of Multiple Solutions}
    We start by showing the mountain pass geometry for the functional $J_\la$, defined in \eqref{eng1}.
    \begin{lemma}\label{geo}
        Let the conditions in Theorem \ref{mainthm} hold. Then we have the following:
        \begin{enumerate}
            \item [$(i)$] There exist $\Lambda>0$, $\delta_\la>0$ and $\ell_\la>0$ such that for all $\lambda\in(0,\Lambda)$ we have
            \begin{align*}
                J_\la(u)\geq\ell_\la>0 \quad \text{for any } u\in X \text{ with }\ \|u\|=\delta_\la.
            \end{align*}
            \item [$(ii)$] There exists $\phi \in X$ with $\|\phi\|>\delta_\la$ such that $ J_\la(\phi)<0$ for all $\lambda\in(0,\Lambda)$.
            \item [$(iii)$] There exists $ \psi \in X$, with $\psi \geq0$, such that $J_\la(t \psi) <0$  for all $ t\to 0^+$ and for all $\lambda>0$.
        \end{enumerate}
    \end{lemma}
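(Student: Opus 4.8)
The plan is to split the energy $J_\la$ in \eqref{eng1} into its modular part $\Phi$ (the six integrals built from $p_1(\cdot)$, $p_2(\cdot)$ and $\beta$) and the three lower‑order terms carrying $b$, $a\log|u|$ and $a$, and to estimate each separately. For $\Phi$ I will use Proposition~\ref{norm-modular} together with the elementary bounds $\tfrac1{2p_2^+}\rho(u)\le\Phi(u)\le\tfrac1{p_1^-}\rho(u)$ and $\rho(tu)\le t^{p_2^+}\rho(u)$ for $t\ge1$, $\rho(tu)\le t^{p_1^-}\rho(u)$ for $0<t\le1$. The term $\la\int_\Om\frac b{\al}|u|^{\al(x)}$ will be controlled by H\"older's inequality (Lemma~\ref{Holder}) with $b\in L^{g(\cdot)}(\Om)$, $g'=r/\al$, then Lemma~\ref{lemA1} (with $\vartheta_1=\al$, $\vartheta_2=r/\al$) and the embedding $X\hookrightarrow L^{r(\cdot)}(\Om)$ of Lemma~\ref{embd-X}, which is licit because $r^+<p^*_{2_{s_2}}(x)$ by $(H_6)$; this yields $\la\int_\Om\frac b{\al}|u|^{\al(x)}\le\la C_1(\|u\|^{\al^-}+\|u\|^{\al^+})$. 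The two $r$‑terms will be handled via Lemma~\ref{logm} and Lemma~\ref{log-in}. For $(i)$: since $a\ge0$ we have $\int_\Om\frac{a}{r^2}|u|^{r(x)}\ge0$, and for $\|u\|<1$ (so $\log\|u\|<0$) Lemma~\ref{logm} gives $-\int_\Om\frac a r|u|^{r(x)}\log|u|\ge-C_2\|u\|^{r^-}$; combining with $\rho(u)\ge\|u\|^{p_2^+}$,
\[
J_\la(u)\ \ge\ \tfrac1{2p_2^+}\|u\|^{p_2^+}-C_2\|u\|^{r^-}-\la C_1\|u\|^{\al^-}\qquad\text{for }\|u\|<1 .
\]
Since $p_2^+<r^-$ by $(H_6)$, $\delta\mapsto\tfrac1{2p_2^+}\delta^{p_2^+}-C_2\delta^{r^-}$ is strictly positive on a right neighbourhood of $0$; I fix $\delta_\la=\delta_0\in(0,1)$ there, call $m>0$ the value attained, and set $\Lambda:=m/(2C_1\delta_0^{\al^-})$ (this is the $\Lambda(s_i,p_i,\Om,N,\al,\beta,r)$ of the statement). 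Then $J_\la(u)\ge m-\la C_1\delta_0^{\al^-}\ge m/2=:\ell_\la>0$ whenever $\la<\Lambda$ and $\|u\|=\delta_\la$.

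For $(ii)$ and $(iii)$ I fix, once and for all, $\psi\in C_0^\infty(\Om)$ with $\psi\ge0$, $\psi\not\equiv0$, whose support lies in an open subset of $\Om$ on which $r\equiv r^+$ — such a set exists by the last clause of $(H_6)$ — and, using $(H_4)$, with $\int_\Om a|\psi|^{r^+}>0$; then $\psi\in X$ and $\int_\Om b|\psi|^{\al(x)}>0$ by $(H_5)$. For $(iii)$ I take $0<t<1$: then $\Phi(t\psi)\le\tfrac{t^{p_1^-}}{p_1^-}\rho(\psi)$, from Lemma~\ref{log-in}$(i)$ and $\psi\in L^\infty(\Om)$ the two $r$‑terms of $J_\la(t\psi)$ are $o(t^{\al^+})$ as $t\to0^+$, and $-\la\int_\Om\frac b{\al}|t\psi|^{\al(x)}\le-\la c_\psi t^{\al^+}$ with $c_\psi:=\tfrac1{\al^+}\int_\Om b|\psi|^{\al(x)}>0$; as $\al^+<p_1^-$ by $(H_6)$, the term $-\la c_\psi t^{\al^+}$ dominates, whence $J_\la(t\psi)<0$ for all small $t>0$ and every $\la>0$.

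For $(ii)$, the delicate point, I let $t\to+\infty$ and discard the harmless term $-\la\int_\Om\frac b{\al}|t\psi|^{\al(x)}\le0$. Since $r\equiv r^+$ on $\supp\psi$, the two $r$‑terms of $J_\la(t\psi)$ combine into $-\int_{\supp\psi}\frac{a}{r^+}t^{r^+}|\psi|^{r^+}\bigl(\log t+\log|\psi|-\tfrac1{r^+}\bigr)dx$; splitting this over $\{t|\psi|\ge e^{1/r^+}\}$ (where the bracket is $\ge0$ and, on a fixed positive‑measure subset of $\{a>0\}\cap\{|\psi|\ge\varepsilon_0\}$, is $\ge\tfrac12\log t$ for $t$ large) and its complement (where Lemma~\ref{log-in}$(i)$ bounds the integrand uniformly in $t$) produces, for $t>1$,
\[
J_\la(t\psi)\ \le\ \tfrac{t^{p_2^+}}{p_1^-}\rho(\psi)-c_1 t^{r^+}\log t+c_2,\qquad c_1>0 .
\]
Because $p_2^+<2p_2^+<r^-\le r^+$ by $(H_6)$, the right‑hand side tends to $-\infty$, so I may pick $t_0>1$ with $J_\la(t_0\psi)<0$ and $\|t_0\psi\|=t_0\|\psi\|>\delta_\la$, and set $\phi:=t_0\psi$ (valid for every $\la\in(0,\Lambda)$, indeed every $\la>0$). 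The main obstacle is precisely this last estimate: the coexistence of $-|u|^{r(x)}\log|u|$ and $+|u|^{r(x)}/r(x)^2$ forces one to keep $r$ constant on $\supp\psi$, so the decisive contribution scales cleanly like $t^{r^+}\log t$, and then $r^+>p_2^+$ makes it outgrow the homogeneous part of order $t^{p_2^+}$ — which is exactly why $(H_6)$ imposes both $r^->2p_2^+$ and the local constancy of $r$ outside a ball.
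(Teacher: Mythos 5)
Your proof is correct and follows essentially the same route as the paper's: the same modular, H\"older and embedding estimates together with Lemma \ref{logm} give the lower bound in $(i)$, a test function supported where $r\equiv r^+$ produces the dominant $-t^{r^+}\log t$ term in $(ii)$, and the $\alpha$-term dominates as $t\to0^+$ in $(iii)$. The only (harmless) deviations are that in $(i)$ you fix $\delta_\lambda$ first and then shrink $\Lambda$ instead of minimizing the auxiliary function $\tau_\lambda$, and in $(ii)$ you isolate the coefficient of $t^{r^+}\log t$ by a domain splitting where the paper simply takes $\xi\geq 1$ on $B^c(x_0,R_0)\cap\Omega$; note that, exactly as in the paper's own argument, your choice of $\psi$ tacitly requires $a\not\equiv 0$ on the region where $r\equiv r^+$.
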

    \begin{proof}\begin{itemize}
            \item  [$(i)$] Let $u\in X$ with $\|u\|<1$. Then by definition, considering \eqref{rhomod}, we have $\|u\|_{X_{p_i}}<1$ for $i\in\{1,2\}$. Hence, using Lemmas \ref{Holder}, \ref{lemA1}, \ref{embd-X}, \ref{logm} and Proposition \ref{norm-modular}, we obtain
            \begin{equation}\label{simile}
            \begin{aligned}
                J_\la(u) & = \frac{1}{2}\iint_{\R^{2N}\setminus(\mathcal{C}\Omega)^2}\frac{|u(x)-u(y)|^{p_1(x,y)}}{p_1(x,y)|x-y|^{N+s_1(x,y)p_1(x,y)}}dxdy+\int_{\Omega}\frac{1}{\overline{p}_1(x)}|u|^{\overline{p}_1(x)}dx\\
                &\;\;\;\;\;+\frac{1}{2}\iint_{\R^{2N}\setminus(\mathcal{C}\Omega)^2}\frac{|u(x)-u(y)|^{p_2(x,y)}}{p_2(x,y)|x-y|^{N+s_2(x,y)p_2(x,y)}}dxdy+\int_{\Omega}\frac{1}{\overline{p}_2(x)}|u|^{\overline{p}_2(x)}dx\\
                &\;\;\;\;\;+\int_{\mathcal{C}\Omega} \frac{\beta(x)|u|^{\overline{p}_1(x)}}{\overline{p}_1(x)}\,dx+\int_{\mathcal{C}\Omega} \frac{\beta(x)|u|^{\overline{p}_2(x)}}{\overline{p}_2(x)}\,dx\\
                &\;\;\;\;\;-\la  \int_{\Omega}\frac{b(x)}{\al(x)}| u|^{\al(x)} dx-\int_{\Omega} \frac{a(x)}{r(x)} | u|^{r(x)}\log|u| dx + \int_{\Omega}\frac{a(x)}{r(x)^2}| u|^{r(x)} dx\\
                &\geq \frac{1}{2p_2^+}\rho(u)- \frac{2\la}{\alpha^-}\|b\|_{L^{g(\cdot)}(\Om)}\||u|^{\al(\cdot)}\|_{L^{\frac{r(\cdot)}{\al(\cdot)}}(\Om)}
                - C\|a\|_{L^\infty(\Om)}\max\left\{\|u\|^{r^-},\|u\|^{r^+}\right\}\\
                &\;\;\;\;\;-\log \|u\|\int_{\Om}\frac{a(x)}{r(x)}|u|^{r(x)}
                dx\\
                &\geq \frac{1}{2p_2^+}\|u\|^{p_2^{+}}
                -\frac{2\la}{\alpha^-}\|b\|_{L^{g(\cdot)}(\Om)}\left\{\|u\|^{\al^{-}}_{L^{r(\cdot)}(\Omega)}+\|u\|^{\al^{+}}_{L^{r(\cdot)}(\Omega)}\right\}\\&\qquad- C\|a\|_{L^\infty(\Om)}\|u\|^{ r^-}\\
                &\geq \frac{1}{2p_2^+}\|u\|^{p_2^{+}}
                -\frac{4\la}{\alpha^-}\max\{C_r^{\alpha^-},C_r^{\alpha^+}\}\|b\|_{L^{g(\cdot)}(\Om)}\|u\|^{\al^{-}}- C\|a\|_{L^\infty(\Om)}\|u\|^{ r^-}\\
                &= \frac{1}{2p_2^+}\|u\|^{p_2^{+}}-\la C_1\|u\|^{\al^{-}}- C_2\|u\|^{ r^-}\\
                &= \left(\frac{1}{2p_2^+}-\la C_1\|u\|^{\al^{-}-p_2^+}-
                C_2\|u\|^{ r^--p_2^+}\right)\|u\|^{p_2^{+}},
            \end{aligned}
            \end{equation}
            where $C_1=4\max\{C_r^{\alpha^-},C_r^{\alpha^+}\}\frac{\|b\|_{L^{g(\cdot)}(\Om)}}{\alpha^-}$ and $C_2=C\|a\|_{L^\infty(\Om)}.$
            Now for each $\la>0,$ we define the function $\tau_{\la}:(0,\infty)\to\RR$ as
            {\begin{align*}\tau_{\la}(t)=C_1\la t^{\al^- -p_2^+ }+C_2t^{r^-
                        -p_2^+  }.\end{align*}} Since   we have  $1<\al^-<p_2^+<r^-$, it
            follows that $\displaystyle \lim _{t\to0}
            \tau_{\la}(t)=\displaystyle\lim _{t\to \infty}
            \tau_{\la}(t)=\infty.$ Thus, we can find infimum of $ \tau_\la $. Note
            that by $\tau'_{\la}(t)=0,$ we obtain
            $$\tau'_{\la}(t)= {(\al^- -p_2^+) }\la C_1 +C_2 (r^--p_2^+) t^{{r}^--\al^-}=0,$$
            when $t=t^*:=\bigg( \la \frac{p_2^+-\al^- }{r^--p_2^+
            }\cdot\frac{C_1}{C_2}\bigg)^{1/(r^--\al^-)}.$ Clearly $t_*>0$. Also
            it can be checked that $\tau''_{\la}(t_*)>0$ and hence infimum of
            $\tau_{\la}(t)$ is achieved at $t_*$. Now observing that
            \begin{align}\label{New2}
                \tau_{\la}(t_*)&=\la C_1
                \bigg( \la \frac{p_2^+-\al^-}{(r^--p_2^+)}\cdot\frac{C_1}{C_2}\bigg)^{\displaystyle\frac{\al^--p_2^+}{r^--\al^-}}+C_2
                \bigg( \la \frac{p_2^+-\al^-}{(r^--p_2^+)}\cdot\frac{C_1}{C_2}\bigg)^{\displaystyle\frac{r^--p_2^+}{r^--\al^-}}\nonumber\\
                &=\la^{\displaystyle{\frac{r^--p_2^+}{r^--\al^-}}}\cdot C_3\to 0\text{ as\hspace{2mm}} \la\to 0^+,
            \end{align}
            for some constant $C_3>0$ independent of $u.$ Therefore we infer
            from \eqref{New2} that there exists $\Lambda>0 $ such that for any
            $\la\in(0,\Lambda),$ we can choose $\ell_\la>0$ and $0<\delta_\la<1$ sufficiently small such
            that
            \begin{align*}
                \text{
                    $J_\la(u)\geq\ell_\la>0 $ for all $ u\in X$ with $\| u \|=\delta_\la$.}
            \end{align*}

            \item [$(ii)$]
            Let $\lambda\in(0,\Lambda)$. Let $\xi\in C_0^\infty(\mathbb R^N)$, with $\xi>0$, $\xi=1$ in $B(x_0,R_0)$ and $\xi\geq1$ in $B^c(x_0,R_0)\cap \Omega$. Let $t>1$ sufficiently large such that $\| t\xi \|>1$. Then, using Proposition \ref{norm-modular} and $(H_6)$, we get
            \begin{align*}
                J_\la(t \xi)
                &=\frac{1}{2}\iint_{\R^{2N}\setminus(\mathcal{C}\Omega)^2}t^{p_1(x,y)} \frac{|\xi(x)-\xi(y)|^{p_1(x,y)}}{p_1(x,y)|x-y|^{N+sp_1(x,y)}}\,dx\,dy
                +\int_{\Omega}t^{\overline{p}_1(x)}\frac{|\xi|^{\overline{p}_1(x)}}{\overline{p}_1(x)}\,dx\\
                &\qquad+\frac{1}{2}\iint_{\R^{2N}\setminus(\mathcal{C}\Omega)^2}t^{p_2(x,y)} \frac{|\xi(x)-\xi(y)|^{p_2(x,y)}}{p_2(x,y)|x-y|^{N+sp_2(x,y)}}\,dx\,dy
                +\int_{\Omega}t^{\overline{p}_2(x)}\frac{|\xi|^{\overline{p}_2(x)}}{\overline{p}_2(x)}\,dx\\
                &\qquad+\int_{\mathcal{C}\Omega}t^{\overline{p}_1(x)} \frac{\beta(x)|\xi|^{\overline{p}_1(x)}}{\overline{p}_1(x)}\,dx
                +\int_{\mathcal{C}\Omega}t^{\overline{p}_2(x)} \frac{\beta(x)|\xi|^{\overline{p}_2(x)}}{\overline{p}_2(x)}\,dx \\
                &\qquad-\la  \int_{\Omega}\frac{b(x)}{\al(x)}| t\xi|^{\al(x)} dx- \int_{\Omega} \frac{a(x)}{r(x)} | t\xi|^{r(x)}\log|t\xi| dx +  \int_{\Omega}\frac{a(x)}{r(x)^2}| t\xi|^{r(x)} dx\\
                &\leq t^{p_2^+} \rho(\xi)-\lambda t^{\alpha^{-}}\frac{b^-}{\alpha^+} \int_{\Omega}  |
                \xi|^{\alpha(x)}dx\\
                &\qquad+\frac{\|a\|_{L^\infty(\Om)}t^{r^{+}}}{(r^{-})^{2}}\int_{\Omega}|\xi|^{r(x)}\,dx-\frac{1}{r^+} \int_{B^{c}(x_0,R_0)\cap \Omega}  a(x)| t\xi|^{r(x)}\log|t\xi| dx\\
                &= t^{p_2^+} \rho(\xi)-\lambda t^{\alpha^{-}}\frac{b^-}{\alpha^+} \int_{\Omega}  |
                \xi|^{\alpha(x)}dx
                +\frac{\|a\|_{L^\infty(\Om)}}{({r^{-}})^{2}}t^{r^{+}}\int_{\Omega}|\xi|^{r(x)}\,dx-\frac{t^{r^{+}}}{r^+} \int_{B^{c}(x_0,R_0)\cap \Omega}  a(x) |\xi|^{r^{+}}\log|t\xi| dx\\
                &= t^{r^{+}} \left[t^{p_2^{+}-r^{+}} \rho(\xi)
                +\frac{\|a\|_{L^\infty(\Om)}}{({r^{-}})^{2}}\int_{\Omega}|\xi|^{r(x)}\,dx-\frac{1}{r^+} \int_{B^{c}(x_0,R_0)\cap \Omega}  a(x) |\xi|^{r^{+}}\log|\xi| dx\right]\\
                &\qquad-t^{r^{+}} \left[\frac{1}{r^+} \int_{B^{c}(x_0,R_0)\cap \Omega}  a(x) |\xi|^{r^{+}}\log|t|dx-\lambda t^{\alpha^{-}-r^{+}}\frac{b^-}{\alpha^+} \int_{\Omega}  |
                \xi|^{\alpha(x)}dx\right].
            \end{align*}
            Since $\alpha^{-}<p_2^{+}<r^{+}$ by $(H_6)$, from above  we have $J_\la(t\xi)\to-\infty$ as $t\to\infty$. Hence, we can find $\widetilde t>0,$ sufficiently large such that $\phi:=\widetilde t\xi$ verifies $\|\phi\|\geq\delta_\la$ and $J_\la(\phi)<0$.

            \vspace{0.1cm}
            \item [$(iii)$]  Let $\lambda>0$. Let $\psi\in C_0^\infty(\mathbb R^N)$, with $\psi=0$ in $B(x_0,R_0)$ and $\psi\geq 0$ in $B^c(x_0,R_0)\cap \Omega$. Let
            $t>0$ sufficiently small such that $\| t\psi \|<1$. Then, by  Proposition
            \ref{norm-modular}, Lemma \ref{logm} and $(H_6)$, we obtain \vspace{-.2cm}
            \begin{align*}
                J_\la(t\psi) \leq& t^{p_1^-} \rho(\psi)
                    +\frac{\|a\|_{L^\infty(\Om)}t^{r^{-}}}{{r^{-}}^{2}}\int_{\Omega}|\psi|^{r(x)}\,dx-\frac{1}{r^+}
                    \int_{B^{c}(x_0,R_0)\cap \Omega}  a(x)|
                    t\psi|^{r^{+}}\log|t\psi| dx\\
                    &-\lambda
                    t^{\alpha^{+}}\frac{b^-}{\alpha^+} \int_{\Omega}  |
                    \psi|^{\alpha(x)}dx,
            \end{align*}
            {which shows that
                \begin{align*}
                    \frac{J_\la(t\psi)}{
                        t^{\alpha^{+}}}  &\leq t^{p_1^{-}-\alpha^{+}} \rho(\psi)
                    +\frac{a^{+}t^{r^{-}-\alpha^{+}}}{{r^{-}}^{2}}\int_{\Omega}|\psi|^{r(x)}\,dx\\
                    &\;\;\;-\frac{1}{r^+} \int_{B^{c}(x_0,R_0)\cap\Omega}  a(x)|
                    t\psi|^{r^{+}-\alpha^{+}}\log|t\psi| dx-\lambda
                    \frac{b^-}{\alpha^+} \int_{\Omega}  | \psi|^{\alpha(x)}dx.
                \end{align*}
                Thus $$\displaystyle \lim_{t\rightarrow
                    0}\frac{J_\la(t\psi)}{t^{\alpha^{+}}}\leq -\lambda
                \frac{b^-}{\alpha^+} \int_{\Omega}  | \psi(x)|^{\alpha(x)}dx <0,$$
                since  $\al^+<p_1^-<r^{+}$ by $(H_6)$. \\
                This completes the proof.}
        \end{itemize}
    \end{proof}

Let $k\in\mathbb R$. We recall that a functional $J_\la:X\to\mathbb R$ fulfills the Palais-Smale condition $(PS)_k$ if any sequence $(u_n)_{n\geq1}\subset X$ satisfying
\begin{equation}\label{3.7}
J_\la(u_n)\to k\quad\mbox{ and }\quad\|J_\la'(u_n)\|_{X^*}\to 0\quad\mbox{ as }n\rightarrow\infty,
\end{equation}
admits a convergent subsequence in $X$. Now we study the Palais-Smale condition for the functional $J_\la$.
    \begin{lemma}\label{bounded}
        Let the hypotheses in Theorem \ref{mainthm} hold. Then,  the functional $J_\la$ satisfies the $(PS)_k$-condition for any $k\in\R$ and any $\lambda>0$.
    \end{lemma}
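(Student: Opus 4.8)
\medskip

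The plan is to argue in the usual two stages: first show that any $(PS)_k$-sequence $(u_n)$ is bounded in $X$, and then upgrade weak convergence to strong convergence using the $(S_+)$-property of $\rho'$ established in Lemma \ref{s+}. Let $(u_n)\subset X$ satisfy \eqref{3.7}. The standard device is to estimate $J_\la(u_n)-\tfrac{1}{\theta}\langle J_\la'(u_n),u_n\rangle$ for a well-chosen constant $\theta$; because the logarithmic term involves the exponent $r(\cdot)$ and assumption $(H_6)$ guarantees $2p_2^+<r^-$, a natural choice is $\theta\in(2p_2^+,r^-)$. Expanding, the leading ``kinetic'' part contributes a factor $\bigl(\tfrac{1}{2p_i(x,y)}-\tfrac{1}{\theta}\bigr)\ge \bigl(\tfrac{1}{2p_2^+}-\tfrac{1}{\theta}\bigr)>0$ times $\rho(u_n)$ (and likewise for the $L^{\overline p_i}$ and boundary $\beta$-terms), while the logarithmic term $-\int_\Om \tfrac{a(x)}{r(x)}|u_n|^{r(x)}\log|u_n|$ combined with its derivative $-\int_\Om a(x)|u_n|^{r(x)}\log|u_n|$ and the companion term $+\int_\Om\tfrac{a(x)}{r(x)^2}|u_n|^{r(x)}$ assembles into something controlled by Lemma \ref{logm} plus a manifestly nonnegative piece $\bigl(\tfrac1\theta-\tfrac1{r(x)}\bigr)a(x)|u_n|^{r(x)}\log|u_n|$ on the set where $|u_n|\ge 1$; on the set $|u_n|<1$ one uses Lemma \ref{log-in}$(i)$ to bound $|u_n|^{r(x)}|\log|u_n||$ by a constant. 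The subcritical term $-\la\int_\Om \tfrac{b(x)}{\al(x)}|u_n|^{\al(x)}$ is handled with H\"older (Lemma \ref{Holder}), $(H_5)$, Lemma \ref{lemA1} and the compact embedding of Lemma \ref{embd-X}, producing a term of order $\|u_n\|^{\al^+}$ with $\al^+<p_1^-\le p_2^+<\theta$. Collecting everything and using Proposition \ref{norm-modular} to pass from $\rho(u_n)$ to powers of $\|u_n\|$, one arrives at an inequality of the form $C+\varepsilon_n\|u_n\|\ge c_0\|u_n\|^{p_1^-}-C_1\|u_n\|^{\al^+}-C_2\log(1+\|u_n\|)\|u_n\|^{r^+}/\text{(big power)}-C_3$; the key point is that after dividing by $\|u_n\|^{p_1^-}$ every negative contribution either has a strictly smaller power of $\|u_n\|$ or is absorbed because the logarithmic growth is dominated by any positive power. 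Hence $\|u_n\|$ stays bounded.

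\medskip

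Once boundedness is in hand, by reflexivity of $X$ (and passing to a subsequence) we have $u_n\rightharpoonup u$ in $X$, and by the compact embeddings $X\hookrightarrow L^{\al(\cdot)}(\Om)$ and $X\hookrightarrow L^{\gamma(\cdot)}(\Om)$ for every $\gamma$ with $1<\gamma(x)<p^*_{2s_2}(x)$ (Lemma \ref{embd-X}), in particular for $\gamma=r(\cdot)$ and for exponents slightly below $p^{*-}_{2s_2}$, we get $u_n\to u$ strongly in those Lebesgue spaces and $u_n\to u$ a.e.\ in $\Om$. The next step is the familiar one: test $J_\la'(u_n)$ against $u_n-u$, i.e.\ evaluate $\langle J_\la'(u_n),u_n-u\rangle\to 0$ (which holds since $\|J_\la'(u_n)\|_{X^*}\to0$ and $u_n-u$ is bounded), and show that all the lower-order terms in this pairing vanish in the limit. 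Concretely, $\int_\Om b(x)|u_n|^{\al(x)-2}u_n(u_n-u)\to 0$ by H\"older and strong $L^{\al(\cdot)}$-convergence; $\int_\Om a(x)|u_n|^{r(x)-2}u_n\log|u_n|(u_n-u)\to 0$ by splitting $\{|u_n|<1\}$ and $\{|u_n|\ge1\}$, using Lemma \ref{log-in} to bound the logarithm by a constant on the first set and by a small power on the second, then H\"older together with the strong convergence in the appropriate subcritical Lebesgue space (this is exactly the kind of estimate already carried out in Lemma \ref{logm}). The remaining $L^{\overline p_i}$ and $\beta$-boundary terms of $\langle J_\la'(u_n),u_n-u\rangle$ likewise go to zero by strong $L^{\overline p_i(\cdot)}$-convergence. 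What is left is precisely $\langle \rho'(u_n),u_n-u\rangle\to0$, whence $\limsup_n\langle\rho'(u_n),u_n-u\rangle\le 0$, and Lemma \ref{s+}$(iii)$ ($(S_+)$-property) yields $u_n\to u$ strongly in $X$. This proves the $(PS)_k$-condition.

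\medskip

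\textbf{Main obstacle.} The delicate point is the boundedness step, specifically the bookkeeping of the logarithmic nonlinearity: unlike a pure power term it is \emph{not} homogeneous, it changes sign depending on whether $|u_n|\lessgtr 1$, and its contribution to $\langle J_\la'(u_n),u_n\rangle$ does not simply cancel against its contribution to $J_\la(u_n)$ the way a power nonlinearity would. The resolution is twofold: (i) the extra term $+\int_\Om\tfrac{a(x)}{r(x)^2}|u_n|^{r(x)}$ in the definition \eqref{eng1} of $J_\la$ is tailored so that the combination $\tfrac1\theta\langle J_\la'(u_n),u_n\rangle - J_\la(u_n)$ produces the \emph{coercive} quantity $\int_\Om\bigl(\tfrac{1}{r(x)}-\tfrac1\theta\bigr)a(x)|u_n|^{r(x)}\log|u_n|\,dx$ which is nonnegative on $\{|u_n|\ge1\}$ (since $r^-<\cdots$ forces $\tfrac1{r(x)}>\tfrac1\theta$ — here one uses $\theta<r^-$ from $(H_6)$), and (ii) on the bad set $\{|u_n|<1\}$ the negative part is uniformly bounded by $\tfrac{\|a\|_\infty|\Om|}{e r^-}$ via Lemma \ref{log-in}$(i)$. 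This is precisely where assumption $(H_6)$, and in particular the gap $2p_2^+<r^-$, is indispensable. Everything else — the subcritical term, the compact embeddings, the $(S_+)$-argument — is routine once this estimate is set up.
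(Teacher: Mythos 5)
Your plan follows essentially the same route as the paper: boundedness via a lower bound on $J_\la(u_n)-\tfrac1\theta\langle J_\la'(u_n),u_n\rangle$ (the paper takes $\theta=r^-$ exactly, so that $\tfrac1{r^-}-\tfrac1{r(x)}\ge0$ makes the logarithmic contribution nonnegative on $\{|u_n|\ge1\}$ and Lemma \ref{log-in}$(i)$ controls it on $\{|u_n|<1\}$), followed by weak convergence, vanishing of the lower-order pairings against $u_n-u$, and the $(S_+)$-property of Lemma \ref{s+}. Two caveats. First, in the boundedness step you say the logarithmic block is ``controlled by Lemma \ref{logm}'' and you write a residual term $\log(1+\|u_n\|)\|u_n\|^{r^+}$ into the final inequality: if you actually invoked Lemma \ref{logm} there you would pick up a negative contribution of order $\max\{\|u_n\|^{r^-},\|u_n\|^{r^+}\}$, which dominates $\|u_n\|^{p_1^-}$ (recall $r^->2p_2^+>p_1^-$) and would destroy the argument. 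The paper does not use Lemma \ref{logm} at this point; the correct mechanism is exactly the sign/constant splitting you also describe, after which no $\|u_n\|^{r^\pm}$ term survives. Second, in your ``main obstacle'' paragraph the inequality $\tfrac1{r(x)}>\tfrac1\theta$ is backwards (with $\theta<r^-\le r(x)$ one has $\tfrac1\theta>\tfrac1{r(x)}$), and correspondingly the combination $\tfrac1\theta\langle J_\la'(u_n),u_n\rangle-J_\la(u_n)$ yields a \emph{nonpositive} logarithmic piece on $\{|u_n|\ge1\}$; the coercive quantity sits in $J_\la(u_n)-\tfrac1\theta\langle J_\la'(u_n),u_n\rangle$, as in your main text. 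For the second stage, the paper passes to the limit in the logarithmic term via equi-integrability and Vitali's theorem rather than your H\"older-plus-strong-convergence argument, but both are valid and the latter is the one the paper itself uses in the critical case.
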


    \begin{proof}
        Let $k\in\R$ and $\lambda>0$.
        Let  $(u_n)_{n\geq1}\subset X$ be a $(PS)_k$ sequence of the
        functional $J_\la$, that is satisfying \eqref{3.7}. We first prove that $(u_n)_{n\geq1}$ is bounded in $X$. Indeed, if  $(u_n)_{n\geq1}$ is
        unbounded in $X$,  up to a subsequence, we may assume that $\|u_n\|>1$ for $n\in\mathbb N$ large enough.
        Hence, by \eqref{3.7} there exists some constant $ {d}>0$ such that, using Lemmas   \ref{Holder}, \ref{lemA1}, \ref{embd-X}, \ref{log-in} and Proposition \ref{norm-modular}, we deduce as $n\to\infty$
            \begin{align}\label{ps}
                & o_n(1)+d+k\|u_n\|\nonumber\\
                &\geq J_\la(u_n) -\frac{1}{r^-} \langle J_\la'(u_n),u_n\rangle\nonumber\\
                &\geq\left(\frac{1}{2p_2^+}-\frac{1}{r^-}\right)\rho(u_n)-\la\left(\frac{1}{\al^-}-\frac{1}{r^-}\right)\int_\Om b(x)|u_n|^{\al(x)}dx\nonumber\\ &\qquad+\int_{\Omega}\left(\frac{1}{r^-}
                -\frac{1}{r(x)}\right) a(x) | u_n|^{r(x)}\log|u_n| dx + \int_{\Omega}\frac{a(x)}{r(x)^2}| u_n|^{r(x)} dx\nonumber\\
                &\geq\left(\frac{1}{2p_2^+}-\frac{1}{r^-}\right)\rho(u_n)-\la\left(\frac{1}{\al^-}-\frac{1}{r^-}\right)\int_\Om b(x)|u_n|^{\al(x)}dx\nonumber\\ &\qquad-{
                    \left(\frac{1}{r^-}-\frac{1}{r^+}\right)\int_{ \Omega\cap \{|u_n(x)|<1\}} a(x) | u_n|^{r(x)}|\log|u_n|\,| dx}
                    + \int_{\Omega}\frac{a(x)}{r(x)^2}| u_n|^{r(x)}
                dx\nonumber\\
                &\geq\left(\frac{1}{2p_2^+}-\frac{1}{r^-}\right)\|u_n\|^{p_1^-}-\la\left(\frac{1}{\al^-}-\frac{1}{r^-}\right)\|b\|_{L^{g(\cdot)}(\Om)}
                \left\{\|u_n\|^{\al^{-}}_{L^{r(\cdot)}(\Omega)}+\|u_n\|^{\al^{+}}_{L^{r(\cdot)}(\Omega)}\right\}\nonumber\\
                &\qquad-\left(\frac{1}{r^-}-\frac{1}{r^+}\right)\frac{|\Omega|\|a\|_{L^\infty(\Om)}}{er^-}\nonumber\\
                &\geq\left(\frac{1}{2p_2^+}-\frac{1}{r^-}\right)\|u_n\|^{p_1^-}-2\la\left(\frac{1}{\al^-}-\frac{1}{r^-}\right)\max\{C_r^{\alpha^-},C_r^{\alpha^+}\}\|b\|_{L^{g(\cdot)}(\Om)}\|u_n\|^{\al^{+}}\nonumber\\
                &\qquad-\left(\frac{1}{r^-}-\frac{1}{r^+}\right)\frac{|\Omega|\|a\|_{L^\infty(\Om)}}{er^-}.
                \end{align}
        Since $1<\al^+<p_1^-<2p_2^+<r^-$ by $(H_6)$, when $\|u_n\|\to\infty$,
        from the above expression, we get a contradiction and hence the
        sequence $(u_n)_{n\geq1} $ is bounded in $X.$

        Since $X$ is reflexive and using Lemma \ref{embd-X}, there exists a subsequence, still denoted by $(u_n)_{n\geq1}$, and $u\in X$ such that
        \begin{equation}\label{convergenze}
        \begin{aligned}
            u_n \rightharpoonup u\;\;\text{ in}\; X,\qquad u_n\to u\;\;\text { in} \;L^{\gamma(\cdot)}(\Om),\qquad u_n \to u\;\; \text{a.e. in}\; \Om,
        \end{aligned}
        \end{equation}
        as $n\to\infty$, for any $\gamma\in C_+(\overline{\Omega})$ satisfying $1 < \gamma(x) <{p_2}_{s_2}^*(x)$ for any $x\in\overline{\Omega}$.
        Now we  show that
        \begin{equation}\label{ll}
            \displaystyle \lim_{n\rightarrow \infty}
            \langle\rho'(u_n)-\rho'(u),u_n-u\rangle=0.
        \end{equation}
        Let $\sigma\in\left(0,(p_{2_{s_2}}^*)^- - r^{+}\right)$. For any
            measurable subset $\Om'\subset \Om$, by Lemmas \ref{embd-X} and \ref{log-in} we have
            \begin{align*}
                \int_{\Om'} a(x) |u_n|^{r(x)}|\log|u_n|| dx &=\displaystyle
                \int_{\Om'\cap \{|u_n|\leq 1\}}
                a(x) |u_n|^{r(x)}|\log |u_n|| dx
                +\displaystyle \int_{\Om'\cap \{|u_n|> 1\}}
                a(x) |u_n|^{r(x)}|\log|u_n|| dx \\
                &\leq\frac{|\Omega'|\|a\|_{L^\infty(\Om)}}{er^-} +
                \frac{\|a\|_{L^\infty(\Om)}}{e\sigma}\int_{\Om'} a(x) |u_n|^{r^{+}+\sigma} dx\\
                &\leq\frac{|\Omega'|\|a\|_{L^\infty(\Om)}}{er^-}+M\frac{|\Omega'|\|a\|_{L^\infty(\Om)}C_{r^{+}+\sigma}^{r^{+}+\sigma}}{e\sigma},
            \end{align*}
            with $M=\sup\|u_n\|^{r^{+}+\sigma}<\infty$,
            which implies that the sequence $(a(x)
                |u_n|^{r(x)}|\log|u_n(x)||)_{n\geq1}$ is uniformly bounded and
            equi-integrable in $L^{1}(\Omega)$. Moreover, by \eqref{convergenze} it easy to see that as
            $n\to\infty$, we get
            $$a(x) |u_n(x)|^{r(x)}|\log|u_n(x)|\,|\rightarrow a(x)
            |u(x)|^{r(x)}|\log|u(x)|\,|\;\; \text{a.e in $\Omega$}.$$
            Thus, the   Vitali's convergence theorem yields that
        \begin{equation}\label{ll1}
            \displaystyle \lim_{n \rightarrow \infty}\int_{\Om}
            a(x) |u_n|^{r(x)}\log|u_n| dx=\int_{\Om}
            a(x) |u|^{r(x)}\log |u| dx.
        \end{equation}
        Similarly, we can prove that
        \begin{equation}\label{ll2}
            \lim_{n \rightarrow \infty}\int_\Om  a(x) u | u_n|^{r(x)-2}u_n\log|u_n| dx= \int_\Om  a(x) | u|^{r(x)}\log|u|
            dx
        \end{equation}
        and
        \begin{equation}\label{ll3}
            \lim_{n \rightarrow \infty}\int_\Om  a(x) u_n | u|^{r(x)-2}u \log|u| dx= \int_\Om  a(x) | u|^{r(x)}\log|u|
            dx.
        \end{equation}
        Consequently, from \eqref{ll1}, \eqref{ll2} and \eqref{ll3}, we conclude that
        \begin{align}\label{ll4}
            \lim_{n \rightarrow \infty}\int_\Om a(x)|
            u_n|^{r(x)-2}u_n(u_n-u)\log|u_n|dx=\lim_{n \rightarrow
                \infty}\int_\Om a(x)| u|^{r(x)-2}u(u_n-u)\log|u|dx.
        \end{align}
        While, \eqref{convergenze} and $(H_6)$ we can deduce that
        \begin{equation}\label{ll5}
            \displaystyle \lim_{n \rightarrow \infty}\int_{\Om} b(x)|u_n|^{\alpha (x)}dx=\int_{\Om} b(x)|u|^{\alpha (x)}dx.
        \end{equation}
        Combining \eqref{3.7}, \eqref{ll4} and \eqref{ll5}, we prove claim \eqref{ll}. Hence, using Lemma
        \ref{s+}, we get our desired result.
    \end{proof}

    \noindent{\bf Proof of Theorem \ref{mainthm}:}
    Let $\lambda\in(0,\Lambda)$, with $\Lambda>0$ given in Lemma \ref{geo}$(i)$.
    Note that   { $J_{\la}(0)=0.$ } From Lemma \ref{geo}$(i),(ii)$, it
    follows that $J_\la$ admits a mountain pass geometry for all $\la\in(0,\Lambda)$. Also,
    by Lemma \ref{bounded},
    the functional $J_\la$ satisfies the Palais-Smale condition $(PS)_k$
    for any $k\in\RR$. Hence, by applying mountain pass theorem, we
    infer that for $\la\in(0,\Lambda),$ there exists a nontrivial weak
    solution $u_{1,\la}\in X$ of  \eqref{eq1} with
    \begin{align}\label{0.123}
    J_\la(u_{1,\la})\geq\ell_\la>0,
    \end{align}
    with $\ell_\la>0$ given by Lemma \ref{geo}$(i)$.
    \par Next, we prove the existence of the second weak solution of \eqref{eq1}. From Lemma \ref{geo}$(iii),$ it yields that
    \begin{align}\label{6.0}
        \displaystyle\inf _{u\in \overline {B}_{\delta_\la}(0)}
        J_\la(u)=m_\la<0,
    \end{align}
    where $\overline{B}_{\delta_\la}(0)=\{u\in X :
    \|u\|\leq\delta_\la\}$.
    From this, by also Lemma \ref{geo}$(i)$ and the Ekeland's variational
    principle in $\overline{B}_{\delta_\la}(0)$,  there exists a sequence $(v_n)_{n\geq1}\subset B_{\delta_\la} (0)$ such that
    \begin{align}\label{0.6.1}
        m_\la\leq J_\la(v_n)\leq m_\la+\frac{1}{n}
        \end{align}\quad\text{and}\quad\begin{align}\label{cc}
        J_\la(v_n)<J_\la(u)+\frac{1}{n}\|u-v_n\|,
    \end{align}
for all $n\in\mathbb N$ and all $u\in \overline{B}_{\delta_\la}(0)$. Then \eqref{0.6.1} implies that $J_\la\to m_\la$.
Fix $n\in\mathbb{N}$, for all $w\in\partial B_1(0)$, where
$
\partial B_1(0)=\{u\in X:\,\, \|u\|=1\},
$
and for all $\varepsilon>0$ so small that $v_n+\varepsilon\,w\in\overline{B}_{\delta_\la}(0)$, using \eqref{cc}, we have
$$
J_\la(v_n+\varepsilon\,w)-J_\la(v_n)\ge-\frac{\varepsilon}{n}
$$
by \eqref{0.6.1}. Since $J_\la$ is G\^ateaux differentiable in $X$, we get
\begin{equation*}
\lim_{\varepsilon\rightarrow 0}
\frac{J_\la(v_n+\varepsilon\,w)-J_\la (v_n)}{\varepsilon}\ge-\frac{1}{n}
\end{equation*}
for all $w\in\partial B_1(0)$. Now replacing $w$ by $-w$ in the above argument, we obtain \begin{equation*}
    \lim_{\varepsilon\rightarrow 0}
    \frac{J_\la(v_n)-J_\la (v_n-\e w)}{\varepsilon}\le \frac{1}{n}.
\end{equation*} Hence
\begin{equation*}
\big|\langle J_\la'(w_n),w\rangle\big|\le\frac{1}{n},
\end{equation*}
since $w\in\partial B_1(0)$ is arbitrary. Consequently, the sequence $(v_n)_{n\ge1}\subset B_{\delta_\la}(0)$ verifies \eqref{3.7} with $k=m_\la$.
    Therefore, from
    Lemma \ref{bounded} and \eqref{6.0}, it yields that there exists
    $u_{2,\la}\in \overline{B}_{\delta_\la}(0)\subset X$ such that
$v_n\to u_{2,\la}$ strongly in $X$ as $n\to\infty$ with
    \begin{align}\label{6.7}
        J_\la(u_{2,\la})=m_\la<0 .
    \end{align}
    Thus, we get that $u_{2\la} $ is a nontrivial weak solution of
    \eqref{eq1}. Now from \eqref{0.123} and \eqref{6.7}, we have
    $J_\la(u_{1,\la})>0>J_\la(u_{2,\la})$, and hence $ u_{1,\la} \not=
    u_{2,\la}.$ This completes the proof of the theorem.

    \subsection{Existence of Ground state Solution}

    To look for the ground state solution of \eqref{eq1}, we consider the following
    minimizing problem
    $$m^{\ast}=\inf_{u\in X}J_{\lambda}(u).$$
        Then, we are able to study the basic properties for $J_\la$.
\begin{lemma}\label{ground}
Let the hypotheses in Theorem \ref{mainthm-2} hold.
Then:
\begin{enumerate}
\item[$(i)$] For every $\lambda>0$,  $J_{\lambda}$ is coercive.
\item[$(ii)$] The functional $J_\la$ satisfies the $(PS)_k$-condition for
any $k\in\R$ and any $\lambda>0$.
\end{enumerate}
\end{lemma}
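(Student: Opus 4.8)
The plan is to establish the coercivity in $(i)$ by a direct estimate exploiting the sign condition $(H_{4}^{'})$, and then to obtain the $(PS)_k$ condition in $(ii)$ from $(i)$ together with the compactness arguments already developed for Lemma \ref{bounded}. The point is that, under $(H_{4}^{'})$, the two logarithmic integrals in the energy \eqref{eng1} cooperate rather than compete: writing $a=-|a|$, the last two terms of $J_\la$ combine into
$$
-\int_\Om\frac{a(x)}{r(x)}|u|^{r(x)}\log|u|\,dx+\int_\Om\frac{a(x)}{r(x)^{2}}|u|^{r(x)}\,dx
=\int_\Om\frac{|a(x)|}{r(x)}\,|u|^{r(x)}\Big(\log|u|-\frac{1}{r(x)}\Big)\,dx .
$$
The first step of the proof of $(i)$ is to show that this quantity is bounded below by a constant independent of $u$: on the set $\{|u|\ge e^{1/r^-}\}$ the integrand is nonnegative, while on the complementary set $|u|^{r(x)}\big(\log|u|-1/r(x)\big)$ is bounded below by a constant depending only on $r^-,r^+$ — using Lemma \ref{log-in}$(i)$ with $\sigma=r^-$ to control $|u|^{r(x)}|\log|u||$ where $|u|\le1$, and the elementary bound $|u|^{r(x)}\le e^{r^+/r^-}$ where $1<|u|<e^{1/r^-}$ — so that, since $a\in L^{\infty}(\Om)$ and $|\Om|<\infty$, the combined term is $\ge -C_0$ for some $C_0=C_0(r^-,r^+,|\Om|,\|a\|_{L^{\infty}(\Om)})>0$.

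Next I would estimate the $\la b$-term exactly as in the derivation of \eqref{simile}, via H\"older's inequality (Lemma \ref{Holder}), Lemma \ref{lemA1} and the compact embedding $X\hookrightarrow L^{r(\cdot)}(\Om)$ of Lemma \ref{embd-X} — the latter admissible because $r^+<p_{2_{s_2}}^{*}(x)$ by $(H_6)$ — which gives $\la\int_\Om\frac{b(x)}{\al(x)}|u|^{\al(x)}\,dx\le\la C_1\max\{\|u\|^{\al^-},\|u\|^{\al^+}\}$. Bounding the principal part from below by $\frac{1}{2p_2^+}\rho(u)$ as in \eqref{simile} and applying Proposition \ref{norm-modular}$(iii)$, I then obtain, for $\|u\|>1$,
$$
J_\la(u)\ \ge\ \frac{1}{2p_2^+}\,\|u\|^{p_1^-}-\la C_1\,\|u\|^{\al^+}-C_0 ,
$$
and since $\al^+<p_1^-$ by $(H_6)$ the right-hand side tends to $+\infty$ as $\|u\|\to\infty$, proving $(i)$. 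Note that this argument uses no relation between $r(\cdot)$ and the exponents of the principal part.

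For $(ii)$, let $(u_n)_{n\ge1}\subset X$ satisfy \eqref{3.7}; by $(i)$ it is bounded in $X$ — here coercivity replaces the truncated-functional estimate of Lemma \ref{bounded} — so, arguing as for \eqref{convergenze} via reflexivity and the compact embeddings of Lemma \ref{embd-X}, up to a subsequence $u_n\rightharpoonup u$ in $X$, $u_n\to u$ in $L^{\gamma(\cdot)}(\Om)$ for every subcritical $\gamma$, and $u_n\to u$ a.e. in $\Om$. From $\|J_\la'(u_n)\|_{X^*}\to0$ one has $\langle J_\la'(u_n),u_n-u\rangle\to0$, and the vanishing of the lower-order terms $\int_\Om a(x)|u_n|^{r(x)-2}u_n(u_n-u)\log|u_n|\,dx$ and $\la\int_\Om b(x)|u_n|^{\al(x)-2}u_n(u_n-u)\,dx$ follows verbatim from the proof of Lemma \ref{bounded} (the equi-integrability and Vitali arguments there use only $\|a\|_{L^{\infty}(\Om)}$, Lemma \ref{log-in} and the $L^{\gamma(\cdot)}$-convergence, hence are insensitive to the sign of $a$). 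Subtracting these from $\langle J_\la'(u_n),u_n-u\rangle$ leaves $\lim_{n\to\infty}\langle\rho'(u_n)-\rho'(u),u_n-u\rangle=0$, whence the $(S_+)$ property of $\rho'$ in Lemma \ref{s+}$(iii)$ yields $u_n\to u$ strongly in $X$.

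The only genuinely delicate point is the lower bound for the logarithmic terms in $(i)$. Since $(H_6)$ only imposes $2p_2^+<r^-$, the power $|u|^{r(x)}$ grows strictly faster than the modular $\rho(u)$, so under $(H_{4}^{'})$ the term $\int_\Om\frac{a(x)}{r(x)^{2}}|u|^{r(x)}\,dx$ is a \emph{negative} contribution of order $\|u\|^{r^+}$ that cannot be absorbed by the principal part by itself; the whole argument rests on pairing it with $\int_\Om\frac{|a(x)|}{r(x)}|u|^{r(x)}\log|u|\,dx$, which is positive once $|u|$ is large, so that the two together stay bounded below. This is exactly why $(H_{4}^{'})$ is needed and why, in contrast with Theorem \ref{mainthm}, a direct minimization (rather than mountain pass plus Ekeland) becomes available here.
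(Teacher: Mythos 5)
Your proof is correct and follows the same overall route as the paper: a direct lower bound on $J_\la$ giving coercivity (principal part bounded below by $\tfrac{1}{2p_2^+}\rho(u)$, the $b$-term controlled via Lemmas \ref{Holder}, \ref{lemA1}, \ref{embd-X}, and the inequality $\al^+<p_1^-$ closing the argument), followed by boundedness of $(PS)_k$ sequences from coercivity and the compactness machinery of Lemmas \ref{bounded} and \ref{s+}. The one place where you genuinely improve on the paper's write-up is the treatment of the logarithmic terms in part $(i)$. The paper's displayed chain retains only $-\int_{\{|u|<1\}}a(x)|u|^{r(x)}\log|u|\,dx$ and silently discards $\int_\Om\frac{a(x)}{r(x)^2}|u|^{r(x)}\,dx$, which under $(H_{4}^{'})$ is nonpositive and of order $\|u\|^{r^+}$ with $r^+>p_1^-$, so it cannot simply be dropped from a lower bound on its own. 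The correct justification is exactly the pairing you carry out: rewriting the two terms together as $\int_\Om\frac{|a(x)|}{r(x)}|u|^{r(x)}\bigl(\log|u|-1/r(x)\bigr)\,dx$, observing nonnegativity where $|u|\ge e^{1/r^-}$, and bounding the integrand uniformly on the complementary set via Lemma \ref{log-in}$(i)$. Your version therefore makes rigorous a step that the paper leaves implicit; part $(ii)$ is handled identically in both proofs.
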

\begin{proof}
        \begin{itemize}
            \item [$(i)$] Let $\la>0$ and let $u\in X$ with $\|u\|>1$. In light of Lemmas \ref{Holder}, \ref{lemA1}, \ref{embd-X}, \ref{log-in} and Proposition \ref{norm-modular}, we infer that
 \begin{align*}
        J_{\la}(u) & := \frac{1}{2}\iint_{\R^{2N}\setminus(\mathcal{C}\Omega)^2}\frac{|u(x)-u(y)|^{p_1(x,y)}}{p_1(x,y)|x-y|^{N+s_1(x,y)p_1(x,y)}}dxdy+
        \int_{\Omega}\frac{|u|^{\overline{p}_1(x)}}{\overline{p
            }_1(x)}dx\nonumber\\
        &\qquad+\frac{1}{2}\iint_{\R^{2N}\setminus(\mathcal{C}\Omega)^2}\frac{|u(x)-u(y)|^{p_2(x,y)}}{p_2(x,y)|x-y|^{N+s_2(x,y)p_2(x,y)}}dxdy+
        \int_{\Omega}\frac{|u|^{\overline{p}_2(x)}}{\overline{p}_2(x)}dx\nonumber\\
        &\qquad+\int_{\mathcal{C}\Omega} \frac{\beta(x)|u|^{\overline{p}_1(x)}}{\overline{p}_1(x)}\,dx+\int_{\mathcal{C}\Omega} \frac{\beta(x)|u|^{\overline{p}_2(x)}}{\overline{p}_2(x)}\,dx\nonumber\\
        &\qquad-\la  \int_{\Omega}\frac{b(x)}{\al(x)}| u|^{\al(x)} dx- \int_{\Omega} \frac{a(x)}{r(x)} | u|^{r(x)}\log|u| dx
        + \int_{\Omega}\frac{a(x)}{r(x)^2}|u|^{r(x)} dx\nonumber\\
        &\geq
        \frac{1}{2p_{2}^{+}}\rho(u)-\frac{2\la}{\alpha^-}\|b\|_{L^{g(\cdot)}(\Om)}\||u|^{\al(\cdot)}\|_{L^{\frac{r(\cdot)}{\al(\cdot)}}(\Om)}-\int_{ \{x\in \Om, \ \
        |u(x)|<1\}}a(x)|u|^{r(x)}\log|u|dx\\
        &\geq \frac{1}{2p_{2}^{+}}
        \|u\|^{p_{1}^{-}}-\frac{4\la}{\alpha^-}\max\{C_r^{\alpha^-},C_r^{\alpha^+}\}\|b\|_{L^{g(\cdot)}(\Om)}\|u\|^{\al^{+}}+\frac{\int_{\Om}a(x)dx}{er^{-}}.
    \end{align*}
    Thus $J_{\la}$ is coercive since $\alpha^{+}<p_{1}^{-}$ by $(H_6)$.\\
    \item [$(ii)$] Let $k\in\R$ and $\lambda>0$.
        Let  $(u_n)_{n\geq1}\subset X$ be a $(PS)_k$ sequence of the
        functional $J_\la$, that is satisfying \eqref{3.7}. From
        $(i)$, we can deduce that $(u_n)_{n\geq1}$ is bounded in $X$. The
        rest of the proof is similar to that of Lemma \ref{bounded}.
\end{itemize}
\end{proof}
\noindent
    \textbf{Proof of Theorem \ref{mainthm-2}:}
    Let $\lambda>0$. From Lemma \ref{ground} and Ekeland's variational principle, we infer that there
    exists $\widetilde u_{\la}\in X$ such that  $J_\la(\widetilde u_\la)=m^*$ and
    $J'_\la(\widetilde u_\la)=0$. Therefore, $\widetilde u_\la$ is a
    ground state solution of \eqref{eq1}. It remains to show that $\widetilde u_\la\neq
    0$. For this, by the same argument used in the proof of Lemma
    \ref{geo}(iii), we can find $w\in X$ such that $J_{\la}(w)<0$.
    Thus $m^{\ast}\leq J_{\la}(w)<0$. Hence, we conclude that $\widetilde
    u_\la\neq
    0$.\\
    This ends the proof.
    \section{Critical case}\label{sec5}
    In this section, we give the proof of Theorem \ref{mainthm-3}. First we prove the mountain pass structure for the functional $J_\mu$, defined in \eqref{eng2}.
    \begin{lemma}\label{geo2}
        Let the hypotheses in Theorem \ref{mainthm-3} hold. Then, we have the following:
        \begin{enumerate}
            \item [$(i)$] For any $\mu>0$ there exist $\ell_\mu$, $\delta_\mu>0$ such that
            \begin{align*}
                J_\mu(u)\geq \ell_\mu>0 \quad \text{for any } u\in X \text{ with }\ \|u\|=\delta_\mu.
            \end{align*}
            \item [$(ii)$] For any $\mu>0$ there exists $w \in X$ such that $\|w\|>\delta_\mu$ and $ J_\mu(w)<0$.
        \end{enumerate}
    \end{lemma}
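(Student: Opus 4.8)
I would follow the blueprint of Lemma \ref{geo}, the only new features being the criticality of $\al(\cdot)$ and the extra subcritical term with exponent $\eta(\cdot)$.

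\textbf{Part $(i)$.} Take $u\in X$ with $\|u\|<1$. By Proposition \ref{norm-modular}$(ii)$ the six ``elliptic'' terms of $J_\mu$ are bounded below by $\frac{1}{2p_2^+}\rho(u)\ge\frac{1}{2p_2^+}\|u\|^{p_2^+}$, exactly as in \eqref{simile}. For the critical term I would use $(H_{5}^{'})$ together with the continuous embedding $X\hookrightarrow L^{\al(\cdot)}(\Omega)$ of Lemma \ref{critical} and Proposition \ref{norm-mod}$(iv)$ to get $\int_\Omega\frac{b}{\al}|u|^{\al(x)}dx\le \frac{\|b\|_\infty}{\al^-}S^{\al^-}\|u\|^{\al^-}$ once $\|u\|$ is small. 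The term $\mu\int_\Omega\frac{c}{\eta}|u|^{\eta(x)}dx$ is treated in the same way using $(H_9)$ and Lemma \ref{embd-X} (note $\eta^+<r^-<\al^-\le p^*_{2s_2}(x)$, so $\eta(\cdot)$ is subcritical), giving $\le C\mu\|u\|^{\eta^-}$. For the logarithmic term I would invoke Lemma \ref{logm} (applicable since $r^+<\al^-\le p^*_{2s_2}(x)$): because $a\ge0$ by $(H_4)$ and $-\log\|u\|>0$ for $\|u\|<1$, the summand $-\mu\log\|u\|\int_\Omega\frac{a}{r}|u|^{r(x)}dx$ is nonnegative, hence $-\mu\int_\Omega\frac{a}{r}|u|^{r(x)}\log|u|\,dx\ge-\mu C\|a\|_\infty\|u\|^{r^-}$, while $\mu\int_\Omega\frac{a}{r^2}|u|^{r(x)}dx\ge0$ may simply be dropped. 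Collecting everything,
\[
J_\mu(u)\ge\|u\|^{p_2^+}\left(\frac{1}{2p_2^+}-C_1\|u\|^{\al^--p_2^+}-\mu C_2\|u\|^{r^--p_2^+}-\mu C_3\|u\|^{\eta^--p_2^+}\right).
\]
By $(H_{8})$ and $(H_{10})$ one has $p_2^+<\eta^-<r^-<\al^-$, so the three exponents in the bracket are strictly positive; picking $\delta_\mu\in(0,1)$ small (depending on $\mu$) makes the bracket $\ge\frac{1}{4p_2^+}$, whence $J_\mu(u)\ge\ell_\mu:=\frac{1}{4p_2^+}\delta_\mu^{p_2^+}>0$ on $\|u\|=\delta_\mu$.

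\textbf{Part $(ii)$.} Fix $v\in C_0^\infty(\R^N)$ with $v\ge0$ and with $\{x\in\Omega:\,v(x)\ge1\}$ of positive measure, and study $t\mapsto J_\mu(tv)$ for $t>1$. The six elliptic terms grow at most like $C_1t^{p_2^+}$ (since $p_i(x,y)\le p_2^+$), the term $\mu\int_\Omega\frac{a}{r^2}|tv|^{r(x)}dx$ at most like $C_2\mu t^{r^+}$, and $-\mu\int_\Omega\frac{c}{\eta}|tv|^{\eta(x)}dx\le0$. Splitting the logarithmic integral over $\{|tv|\le1\}$ and $\{|tv|>1\}$, using Lemma \ref{log-in}$(i)$ on the first piece and the sign of $\log$ on the second, yields $-\mu\int_\Omega\frac{a}{r}|tv|^{r(x)}\log|tv|\,dx\le C_3\mu$, a constant. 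Finally, since $b$ is positive by $(H_{5}^{'})$ and $t^{\al(x)}\ge t^{\al^-}$ on $\{v\ge1\}$, we get $-\int_\Omega\frac{b}{\al}|tv|^{\al(x)}dx\le-\frac{K}{\al^+}t^{\al^-}$ with $K:=\int_{\{v\ge1\}}b|v|^{\al(x)}dx>0$. Hence
\[
J_\mu(tv)\le C_1t^{p_2^+}+C_2\mu t^{r^+}+C_3\mu-\frac{K}{\al^+}t^{\al^-},
\]
and since $(H_{8})$ gives $r^+<\al^-$ (and $p_2^+<\al^-$ as noted above), the right-hand side tends to $-\infty$ as $t\to\infty$. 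As $\|tv\|=t\|v\|\to\infty$, we may choose $t_*$ large so that $w:=t_*v$ satisfies $\|w\|>\delta_\mu$ and $J_\mu(w)<0$.

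The main obstacle is the logarithmic term. In $(i)$ one must carefully exploit the sign of $a$ and of $\log\|u\|$ for $\|u\|$ small, so that the only harmful contribution is the lower-order $-\mu C\|a\|_\infty\|u\|^{r^-}$ (lower order precisely because $r^->p_2^+$). In $(ii)$ one must verify that this term stays bounded above (not growing in $t$), so that it cannot interfere with the dominant negative term $-\tfrac{K}{\al^+}t^{\al^-}$ produced by the critical nonlinearity; here the chain $p_2^+<2p_2^+<\sigma_0<\eta^-\le\eta^+<r^-\le r^+<\al^-$ coming from $(H_{8})$ and $(H_{10})$ is exactly what guarantees that $\al^-$ is the globally dominant exponent.
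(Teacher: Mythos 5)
Your proposal is correct and follows essentially the same route as the paper: bound the modular part via Proposition \ref{norm-modular}, control the critical term with Lemma \ref{critical}, the $\eta(\cdot)$-term with Lemma \ref{embd-X}, and the logarithmic term with Lemma \ref{logm}, exploiting the chain $p_2^+<\eta^-<r^-\le r^+<\al^-$ to isolate the dominant exponents. The only (harmless) deviation is in part $(ii)$, where you bound the logarithmic contribution by a $t$-independent constant via Lemma \ref{log-in}$(i)$, whereas the paper allows it to grow like $t^{r^+}\log t$ — both are dominated by $-Kt^{\al^-}/\al^+$ since $r^+<\al^-$.
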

    \begin{proof}
        \begin{itemize}
            \item [$(i)$]
            Let $u\in X$ with $\|u\|<\min\{1,1/C_\eta,1/S\}$, with $C_\eta>0$ given in Lemma \ref{embd-X}. Then by definition, considering \eqref{rhomod}, we have $\|u\|_{X_{p_i}}<1$ for $i\in\{1,2\}$. Thus, arguing similarly to \eqref{simile} and using here Lemma
            \ref{critical}, we obtain
            \begin{align*}
                J_\mu(u) & = \frac{1}{2}\iint_{\R^{2N}\setminus(\mathcal{C}\Omega)^2}\frac{|u(x)-u(y)|^{p_1(x,y)}}{p_1(x,y)|x-y|^{N+s_1(x,y)p_1(x,y)}}dxdy+\int_{\Omega}\frac{1}{\overline{p}_1(x)}|u|^{\overline{p}_1(x)}dx\\
                &\;\;\;\;\;+\frac{1}{2}\iint_{\R^{2N}\setminus(\mathcal{C}\Omega)^2}\frac{|u(x)-u(y)|^{p_2(x,y)}}{p_2(x,y)|x-y|^{N+s_2(x,y)p_2(x,y)}}dxdy+\int_{\Omega}\frac{1}{\overline{p}_2(x)}|u|^{\overline{p}_2(x)}dx\\
                &\;\;\;\;\;+\int_{\mathcal{C}\Omega} \frac{\beta(x)|u|^{\overline{p}_1(x)}}{\overline{p}_1(x)}\,dx+\int_{\mathcal{C}\Omega} \frac{\beta(x)|u|^{\overline{p}_2(x)}}{\overline{p}_2(x)}\,dx\\
                &\;\;\;\;\;- \int_{\Omega}\frac{b(x)}{\al(x)}| u|^{\al(x)} dx-\mu\int_{\Omega} \frac{a(x)}{r(x)} | u|^{r(x)}\log|u| dx
                +\mu \int_{\Omega}\frac{a(x)}{r(x)^2}| u|^{r(x)} dx
                -\mu \int_{\Omega}\frac{c(x)}{\eta(x)}| u|^{\eta(x)} dx\\
                &\geq \frac{1}{2p_2^+}\rho(u)- \frac{\|b\|_{L^{\infty}(\Om)}}{\alpha^-}\|u\|_{L^{\al(\cdot)}(\Om)}^{\al^-}
                - \mu C\|a\|_{L^\infty(\Om)}\max\left\{\|u\|^{r^-},\|u\|^{r^+}\right\}\\
                &\;\;\;\;\;-\mu \|a\|_{L^\infty(\Om)}\log \|u\|\int_{\Om}|u|^{r(x)}
                dx-\mu\frac{\|c\|_{L^\infty(\Om)}}{\eta^-}\|u\|_{L^{\eta(\cdot)}(\Om)}^{\eta^-}\\
                &\geq \frac{1}{2p_{2}^{+}}\|u\|^{p_2^{+}}
                - \widetilde{C}_1\|u\|^{\al^{-}}- \mu  \widetilde{C}_2\|u\|^{ r^-}-\mu \widetilde{C}_3\|u\|^{ \eta^-}\\
                &\geq \frac{1}{2p_{2}^{+}}\|u\|^{p_{2}^{+}}-( \widetilde{C}_1+ \mu\widetilde{ C}_2+\mu\widetilde{ C}_3)\|u\|^{\eta^-},
            \end{align*}
            since $\eta^-<r^-<\al^-$ by $(H_{10})$,
            where $\widetilde{C}_1=S^{\alpha^-}\|b\|_{L^{\infty}(\Om)}/\alpha^-$, $\widetilde{C}_2=C\|a\|_{L^\infty(\Om)}$ and
            $\widetilde{C}_3=\|c\|_{L^\infty(\Om)}C_\eta^{\eta^-}/\eta^-$.
            Therefore,  we infer that
            \begin{align*}
                \text{
                    $J_\mu(u)\geq \ell_\mu>0 $ for all $ u\in X$ with $\| u \|=\delta_\mu$,}
            \end{align*}
        where $$\ell_\mu=\frac{1}{4p_{2}^{+}} \ \ \mbox{and} \ \
            \delta_\mu<\min\left\{1,\left[\frac{1}{4p_{2}^{+}( \ol C_{1}+\mu \ol C_{2}+\mu
                \ol C_3)}\right]^{\frac{1}{\eta^--p_{2}^{+}}}\right\}.$$
            \item [$(ii)$] Choose a positive
            function $\xi\in X$ with $\|\xi\|=1$. Then for large $t>1$, we have
            \begin{align*}
                J_\mu(t \xi)
                &=\frac{1}{2}\iint_{\R^{2N}\setminus(\mathcal{C}\Omega)^2}t^{p_1(x,y)} \frac{|\xi(x)-\xi(y)|^{p_1(x,y)}}{p_1(x,y)|x-y|^{N+sp_1(x,y)}}\,dx\,        dx+\int_{\Omega}t^{\overline{p}_1(x)}\frac{|\xi|^{\overline{p}_1(x)}}{\overline{p}_1(x)}\,dx\\
                &\qquad+\frac{1}{2}\iint_{\R^{2N}\setminus(\mathcal{C}\Omega)^2}t^{p_2(x,y)} \frac{|\xi(x)-\xi(y)|^{p_2(x,y)}}{p_2(x,y)|x-y|^{N+sp_2(x,y)}}\,dx\,dy
                +\int_{\Omega}t^{\overline{p}_2(x)}\frac{|\xi|^{\overline{p}_2(x)}}{\overline{p}_2(x)}\,dx\\
                &\qquad +\int_{\mathcal{C}\Omega}t^{\overline{p}_1(x)} \frac{\beta(x)|\xi|^{\overline{p}_1(x)}}{\overline{p}_1(x)}\,dx
                +\int_{\mathcal{C}\Omega}t^{\overline{p}_2(x)} \frac{\beta(x)|\xi|^{\overline{p}_2(x)}}{\overline{p}_2(x)}\,dx\\
                &\qquad-  \int_{\Omega}\frac{b(x)}{\al(x)}| t\xi|^{\al(x)} dx- \mu \int_{\Omega} \frac{a(x)}{r(x)} | t\xi(x)|^{r(x)}\log|t\xi(x)| dx +\mu  \int_{\Omega}\frac{a(x)}{r(x)^2}| t\xi(x)|^{r(x)} dx\\
                &\qquad-\mu \int_{\Omega} \frac{c(x)}{\eta(x)}|t\xi|^{\eta (x)}dx\\
                &\leq t^{p_2^+} \rho(\xi)
                +\mu\frac{a^+t^{r^{+}}}{(r^{-})^{2}}\int_{\Omega}|\xi|^{r(x)}\,dx+\mu t^{r^+}\int_{\Omega} \frac{a(x)}{r(x)} |\xi(x)|^{r(x)}|\log|t\xi(x)|\,|dx
                - \frac{t^{\alpha^{-}}}{\alpha^+} \int_{\Omega} b(x) |
                \xi(x)|^{\alpha(x)}dx\\
                &\qquad-\mu t^{\eta^-}\int_{\Omega} \frac{c(x)}{\eta(x)}|\xi|^{\eta (x)}dx.
            \end{align*}
            Since $p^{+}_{2}<\eta^-<r^{+}<\alpha^{-}$, passing to the limit as
            $t\rightarrow \infty$ we get $J_{\mu}(t\xi)<0$. Thus, the assertion
            follows by taking $w=t^*\xi$, with $t^*$ sufficiently large.
        \end{itemize}
    \end{proof}
Let us fix $\mu>0$
    and let us define the critical mountain pass level as
    \begin{equation}\label{cmu}
        c_{\mu}=\inf\limits_{\nu\in\Gamma}\max\limits_{t\in[0,1]}J_{\mu}(\nu(t)),
    \end{equation}
    where
    \begin{equation*}
        \Gamma=\{\nu\in
        C^{1}([0,1],X):\,\,\nu(0)=0,\,\,J_{\mu}(\nu(1))<0\}.
    \end{equation*}
    Evidently, $c_{\mu}>0$ according to Lemma \ref{geo2}. Thus, by mountain pass theorem in \cite[Theorem 1.15]{willem}, there exists a Palais-Smale sequence $(v_n)_{n\geq1}$ such that
    \begin{align}\label{pss}
    J_\mu(u_n)\to c_\mu\quad\mbox{ and }\quad\|J_\mu'(v_n)\|_{X^*}\to 0\quad\mbox{ as }n\rightarrow\infty.
    \end{align}
    Before verifying the validity of compactness of Palais-Smale
    sequences, we need to establish an asymptotic condition for the
    level $c_{\mu}$. This result will be crucial to overcome the lack of
    compactness due to the presence of a critical nonlinearity in
    \eqref{eq2}.
    \begin{lemma}\label{mu}
        Let the hypotheses in Theorem \ref{mainthm-3} hold.
        Then
        $$\displaystyle \lim_{\mu \rightarrow
            \infty}c_{\mu}=0.$$
    \end{lemma}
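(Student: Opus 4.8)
The plan is to exhibit, for each $\mu>0$, an explicit admissible path and then estimate the mountain pass level $c_\mu$ along it, showing that the bound tends to $0$ as $\mu\to\infty$. First I would fix a function $w\in X$ independent of $\mu$, say with $\|w\|=1$ and $w$ positive (as produced in Lemma \ref{geo2}$(ii)$), and consider the ray $t\mapsto tw$ for $t\geq 0$. Since by Lemma \ref{geo2}$(ii)$ there is $t_\mu>0$ with $J_\mu(t_\mu w)<0$, the path $\nu(t)=t\,t_\mu w$ belongs to $\Gamma$, and hence
\begin{equation*}
0<c_\mu\leq \max_{t\in[0,1]}J_\mu(t\, t_\mu w)\leq \max_{t\geq 0}J_\mu(tw).
\end{equation*}
So it suffices to bound $g_\mu(t):=J_\mu(tw)$ uniformly in $\mu$ and show the bound vanishes as $\mu\to\infty$.

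Next I would estimate $g_\mu(t)$ from above for $t\geq 0$. Using $\|w\|=1$, Proposition \ref{norm-modular} gives $\rho(tw)\leq \max\{t^{p_1^-},t^{p_2^+}\}$; the critical and subcritical terms involving $b$ and $c$ are non-positive contributions up to constants, but crucially the dominant negative term is $-\mu\int_\Omega \frac{c(x)}{\eta(x)}|tw|^{\eta(x)}\,dx$, which near $t$ bounded below behaves like $-\mu\,\kappa\, t^{\eta^-}$ for a constant $\kappa>0$ (after splitting $\{|tw|\le 1\}$ and $\{|tw|>1\}$ and using $c\ge 0$, $c\not\equiv 0$ on a set of positive measure — here I would invoke $(H_{10})$ to absorb the possibly-negative logarithmic term $-\mu\int a(x)|tw|^{r(x)}\log|tw|$ against the $c$-term, exactly as the authors say $(H_{10})$ is designed to do). Controlling the logarithmic piece by Lemma \ref{log-in} with a suitable $\sigma$, together with Lemma \ref{logm}, I would obtain an inequality of the form
\begin{equation*}
g_\mu(t)\leq C_1\max\{t^{p_1^-},t^{p_2^+}\}+C_2\max\{t^{r^-},t^{r^+}\}-\mu\, C_3\, t^{\eta^-}\qquad\text{for }t\ge t_0>0,
\end{equation*}
with $C_1,C_2,C_3>0$ independent of $\mu$, and $g_\mu(t)$ bounded by a $\mu$-independent constant on the compact interval $[0,t_0]$.

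Then I would maximize the right-hand side. Because $(H_8)$ and $(H_{10})$ give the exponent ordering $p_1^-\le p_2^+<\eta^-<r^-\le r^+<\alpha^-$, the term $t^{r^+}$ eventually dominates and $g_\mu$ is not globally coercive from above; however, the point $t_\mu$ where the maximum of $g_\mu$ over $[0,\infty)$ is attained can be shown to satisfy $t_\mu\to 0$ as $\mu\to\infty$. Indeed, for $t$ in a fixed bounded range the negative term $-\mu C_3 t^{\eta^-}$ forces $g_\mu<0$ once $\mu$ is large unless $t$ is small; quantitatively, comparing $C_1 t^{p_2^+}$ (the lowest-order positive term, dominant for small $t$) with $\mu C_3 t^{\eta^-}$ and using $p_2^+<\eta^-$, one finds $\max_{t\ge 0}g_\mu(t)\le C\,\mu^{-\theta}$ for some $\theta>0$, whence $c_\mu\to 0$. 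The main obstacle I expect is precisely the bookkeeping around the logarithmic term: one must split $\Omega$ into $\{|tw|\le 1\}$ and $\{|tw|>1\}$, apply Lemma \ref{log-in} on each piece with exponents chosen so the resulting powers of $t$ do not exceed $t^{r^+}$, and then verify that $(H_{10})$ really makes the negative logarithmic contribution dominated by the $c$-term rather than the other way around — this is where the explicit constant $e(r^--\eta^+)\min\{1,(\eta^--\sigma_0)r^+/(\eta^-(r^+-\sigma_0))\}$ in $(H_{10})$ enters and must be tracked carefully.
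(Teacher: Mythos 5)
Your first step coincides with the paper's: fix $w$ independent of $\mu$, note $0<c_\mu\le\max_{t\ge0}J_\mu(tw)$, and try to show this maximum vanishes as $\mu\to\infty$. But the way you then estimate $g_\mu(t)=J_\mu(tw)$ has a genuine gap. The inequality
$g_\mu(t)\le C_1\max\{t^{p_1^-},t^{p_2^+}\}+C_2\max\{t^{r^-},t^{r^+}\}-\mu C_3t^{\eta^-}$
with $C_2$ \emph{independent of} $\mu$ cannot hold: every $a$-term in $J_\mu$ carries the factor $\mu$, so the positive contributions $+\mu\int_\Omega\frac{a(x)}{r(x)^2}|tw|^{r(x)}dx$ and $+\mu\int_{\{|tw|<1\}}\frac{a(x)}{r(x)}|tw|^{r(x)}\bigl|\log|tw|\bigr|dx$ produce $t^{r^\pm}$-terms with coefficient $\mu\widetilde C_2$. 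To salvage your scheme you would have to absorb \emph{both} of these into $-\mu\int_\Omega\frac{c(x)}{\eta(x)}|tw|^{\eta(x)}dx$ uniformly in $t$; after Lemma \ref{log-in} the resulting condition involves the mismatched coefficients $\frac{1}{r(x)^2}$, $\frac1{r(x)}$ versus $\frac{1}{\eta(x)}$, and the explicit constant in $(H_{10})$ does not guarantee the needed sign (it is calibrated for the derivative $\langle J_\mu'(u),u\rangle$ and for the combination $\frac1{\sigma_0}-\frac1{r(x)}$ versus $\frac1{\sigma_0}-\frac1{\eta^-}$ used in the Palais--Smale argument, not for the functional itself). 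Consequently the subsequent maximization ``compare $C_1t^{p_2^+}$ with $\mu C_3t^{\eta^-}$'' is not justified, and on the intermediate range of $t$ the $\mu$-weighted part of $g_\mu$ could even be positive.

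The idea you are missing is the paper's use of the stationarity condition at the fiber maximizer: since $J_\mu(t_\mu w)=\max_{t\ge0}J_\mu(tw)$, one has $\langle J_\mu'(t_\mu w),w\rangle=0$, in which all nonlinear terms appear with coefficient $1$, so $(H_{10})$ together with Lemma \ref{log-in} (applied with $\sigma=r^--\eta^+$) cleanly yields $\rho(t_\mu w)\ge\int_\Omega b(x)|t_\mu w|^{\alpha(x)}dx$. From this one deduces successively that $(t_\mu)_\mu$ is bounded (using $p_2^+<\alpha^-$), that $t_\mu\to0$ (otherwise the $\mu$-weighted bracket in the identity would blow up), and that $(\mu t_\mu^{\eta^+})_\mu$ is bounded; only then does one insert these facts into $c_\mu\le J_\mu(t_\mu w)$ to conclude $c_\mu\to0$. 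Without some substitute for this stationarity argument, your direct upper bound on $\max_{t\ge0}J_\mu(tw)$ does not go through.
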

    \begin{proof}
        Fix $\mu>0$ and take $w\in X$ as given in Lemma \ref{geo2} $(ii)$.
        In light of Lemma \ref{geo2}, there exists $t_{\mu}>0$ such that
        $J_{\mu}(t_{\mu}w)=\max\limits_{t\geq0}J_{\mu}(tw)$. Therefore,
        $\langle J^{'}_{\mu}(t_{\mu}w),w\rangle=0$ and so, from Lemma
        \ref{log-in} with $\sigma=r^--\eta^+$ and condition $(H_{10})$, we obtain
        \begin{equation}
        \begin{aligned}\label{mueq1}
            \rho(t_{\mu}w)&= \displaystyle
            \int_{\Omega}b(x)t_{\mu}^{\alpha(x)}|\omega|^{\alpha(x)}dx+ \mu
            \displaystyle \int_{\Omega}
            a(x)|t_{\mu}w|^{r(x)} \log |t_{\mu} w|dx+\mu \int_{\Omega} c(x)|t_\mu w|^{\eta(x)}dx \\
            &\geq \displaystyle
            \int_{\Omega}b(x)t_{\mu}^{\alpha(x)}|w|^{\alpha(x)}dx+
            \mu \displaystyle \int_{\Omega\cap\{ |t_{\mu}w(x)|>1\}}
            a(x)|t_{\mu} w|^{r(x)} \log |t_{\mu}w|)dx\\
            &\qquad+\mu \displaystyle\int_{ \Omega\cap\{ |t_{\mu}w(x)|<1\}}
            a(x)|t_{\mu} w|^{r(x)} \log |t_{\mu}w|dx
            +\mu \int_{\Omega} c(x)|t_\mu w|^{\eta(x)}dx\\
            &\geq\displaystyle
            \int_{\Omega}b(x)t_{\mu}^{\alpha(x)}|w|^{\alpha(x)}dx
            -\mu \displaystyle\int_{ \Omega\cap\{ |t_{\mu}w(x)|<1\}}
            a(x)|t_{\mu} w|^{r^-} |\log |t_{\mu}w|\,|dx
            +\mu \int_{\Omega} c(x)|t_\mu w|^{\eta(x)}dx\\
            &\geq\displaystyle
            \int_{\Omega}b(x)t_{\mu}^{\alpha(x)}|w|^{\alpha(x)}dx
            -\mu C_{r^--\eta^+}\displaystyle \int_{\Omega\cap\{ |t_{\mu}w(x)|<1\}}a(x)|t_{\mu}w|^{\eta^+}dx
            +\mu \int_{\Omega} c(x)|t_\mu w|^{\eta(x)}dx\\
            &\geq \displaystyle
            \int_{\Omega}b(x)t_{\mu}^{\alpha(x)}|w|^{\alpha(x)}dx + \mu \displaystyle
            \int_{\Omega} [c(x)-C_{r^--\eta^+}a(x)]|t_\mu w|^{\eta(x)}dx \\
            &\geq \displaystyle
            \int_{\Omega}b(x)t_{\mu}^{\alpha(x)}|w|^{\alpha(x)}dx.
        \end{aligned}
        \end{equation}
        We claim that the sequence $(t_{\mu})_{\mu>0}$ is bounded in $\mathbb R^+$.
        Otherwise, denoting by
        $$\Theta=\left\{\mu>0:\,\,\rho(t_{\mu}w)\geq 1\right\},$$
         it follows from
        Proposition \ref{norm-modular} that
        \begin{align}\label{mueq3}
            \rho(t_{\mu}w)\leq t_{\mu}^{p_{2}^{+}} \|w\|^{p_{2}^{+}}, \ \
            \mbox{for any} \ \  \mu \in \Theta.
        \end{align}
        Thus, by \eqref{mueq1} and \eqref{mueq3} and for any $\mu\in \Theta $, we get
        \begin{align*}
            t_{\mu}^{p_{2}^{+}}
            \|w\|^{p_{2}^{+}} \geq t_{\mu}^{\alpha^-}\displaystyle
            \int_{\Omega}b(x)|w|^{\alpha(x)}dx,
        \end{align*}
        which gives a contradiction, since
        $p_{2}^{+}<\alpha^{-}$ by $(H_{10})$. This concludes the proof of the claim.

        Fix a sequence $(\mu_{n})_{n\geq 1}\subset \mathbb{R}^{+}$ such that
        $\mu_{n}\rightarrow\infty$ as $n\rightarrow\infty$. Evidently
        $(t_{\mu_{n}})_{n\geq 1}$ is bounded. Thus, there exist a constant
        $t_{0}>0$ and a subsequence of $(\mu_{n})_{n\geq1}$, still denoted by
        $(\mu_{n})_{n\geq1}$, such that $t_{\mu_{n}}\rightarrow t_{0}$ as
        $n\rightarrow\infty$. In view of Lemma \ref{s+}, we know that the
        sequence $(\rho(t_{\mu_{n}}w))_{n\geq1}$ is still bounded, and so according
        to \eqref{mueq1}, there exists $D>0$ such that
        \begin{align}\label{mueq4}
            \displaystyle
            \int_{\Omega}b(x)|t_{\mu_{n}}w|^{\alpha(x)}dx+\mu_n
            \displaystyle \int_{\Omega} a(x)|t_{\mu_{n}}w|^{r(x)} \log
            |t_{\mu_{n}} w|dx+\mu_n \displaystyle \int_{\Omega}
            c(x)|t_{\mu_{n}}w|^{\eta(x)}dx\leq D,
        \end{align}
        for any $n\in\mathbb N$.
        We assert that $t_{0}=0$.  Otherwise, by  dominated convergence theorem, we get
        \begin{align*}
            \displaystyle \lim_{n\rightarrow \infty}\Big[\displaystyle
            \int_{\Omega} a(x)|t_{\mu_{n}}w|^{r(x)} \log |t_{\mu_{n}}
            w|dx&+\displaystyle \int_{\Omega}
            c(x)|t_{\mu_{n}}w|^{\eta(x)}dx\Big]=\displaystyle \int_{\Omega}
            a(x)|t_{0}w|^{r(x)} \log |t_{0} w|dx\\
            &+\displaystyle \int_{\Omega} c(x)|t_{0}w|^{\eta(x)} dx.
        \end{align*}
        Also, we observe that using Lemma
        \ref{log-in} with $\sigma=r^--\eta^+$ and condition $(H_{10})$ again, as done in \eqref{mueq1}, we have
            \begin{equation*}
            \begin{aligned}
            \displaystyle
             \int_{\Omega}& a(x)|t_0w|^{r(x)} \log
            |t_0 w|dx+ \displaystyle \int_{\Omega}
            c(x)|t_0w|^{\eta(x)}dx\\
            &=\displaystyle \int_{\Omega\cap\{ |t_0w(x)|>1\}}
            a(x)|t_0 w|^{r(x)} \log |t_0w|dx\\
            &\qquad+\displaystyle\int_{ \Omega\cap\{ |t_0w(x)|\leq1\}}
            a(x)|t_0 w|^{r(x)} \log |t_0w|dx
            +\int_{\Omega} c(x)|t_0 w|^{\eta(x)}dx  \\
            &\geq\displaystyle \int_{\Omega\cap\{ |t_0w(x)|>1\}}
            a(x)|t_0 w|^{r^-} \log |t_0w|dx>0.
        \end{aligned}
        \end{equation*}
        Therefore, recalling that  $\mu_n \rightarrow \infty$, we infer
        that
        $$ \displaystyle
        \int_{\Omega}b(x)t_{\mu_{n}}^{\alpha(x)}|w|^{\alpha(x)}dx+\mu_{n}
        \left[\displaystyle \int_{\Omega} a(x)|t_{\mu_{n}}w|^{r(x)} \log
        |t_{\mu_{n}} w|dx+\displaystyle \int_{\Omega}
        c(x)|t_{\mu_{n}}w|^{\eta(x)}\right] \rightarrow \infty \ \
        \mbox{as} \ \ n\rightarrow \infty,$$
        which contradicts \eqref{mueq4}. Hence $t_{0}=0$ and $t_{\mu}\rightarrow 0$ when $\mu
        \rightarrow \infty$, since the
        sequence is arbitrary.

        Consider now the path $\nu_0(t)=tw$, $t\in[0,1]$, belonging to $\Gamma$. Then, Lemma \ref{geo2} gives
        \begin{equation}
        \begin{aligned}\label{vedi}
            0<c_{\mu}&\leq\max\limits_{t\in[0,1]}J_{\mu}(\nu_0(t))\leq
            J_{\mu}(t_{\mu}w)\\
            &\leq
            \rho(t_{\mu}w)+\mu\int_{\Omega}\frac{a(x)}{r^{2}(x)}|t_{\mu}w|^{r(x)}dx-\mu\displaystyle
            \int_{\Omega} \frac{a(x)}{r(x)}|t_{\mu}w|^{r(x)}
            \log|t_{\mu}w|dx-\mu\displaystyle \int_{\Omega}
            \frac{c(x)}{\eta(x)}|t_{\mu}w|^{\eta(x)}dx.
        \end{aligned}
        \end{equation}
        We claim that $(\mu t_\mu^{\eta^+})_{\mu>0}$ is bounded. Otherwise, there exists a sequence $(\mu_n t_{\mu_n}^{\eta^+})_{n\geq1}$ such that $\mu_n t_{\mu_n}^{\eta^+}\to\infty$ as $n\to\infty$. Thus, by \eqref{vedi} and considering that $t_{\mu_n}\to0$ as $n\to\infty$, we get
        $$
        \begin{aligned}
            0<\frac{c_{\mu_n}}{\mu_n t_{\mu_n}^{\eta^+}}\leq&\frac{\rho(t_{\mu_n}w)}{\mu_n t_{\mu_n}^{\eta^+}}+t_{\mu_n}^{r^--\eta^+}\int_{\Omega}\frac{a(x)}{r^{2}(x)}|w|^{r(x)}dx+\displaystyle
            t_{\mu_n}^{r^--\eta^+}\int_{\Omega} \frac{a(x)}{r(x)}|w|^{r(x)}
            |\log|t_{\mu_n}w|\,|dx\\
            &-\displaystyle \int_{\Omega}
            \frac{c(x)}{\eta(x)}|w|^{\eta(x)}dx
        \end{aligned}
        $$
        from which, by sending $n\to\infty$ and since $\eta^+<r^-$ by $(H_{10})$, we get
        $$
        0\leq-\displaystyle \int_{\Omega}
            c(x)|w|^{\eta(x)}dx<0
        $$
        that is the desired contradiction.
        Hence, $(\mu t_\mu^{\eta^+})_{\mu>0}$ is bounded so that by \eqref{vedi} again, we have
        $$
        \begin{aligned}
            0<c_{\mu}
            &\leq
            \rho(t_{\mu}w)+\mu t_\mu^{\eta^+}\int_{\Omega}\frac{a(x)}{r^{2}(x)}t_{\mu}^{r^--\eta^+}|w|^{r(x)}dx-\mu t_\mu^{\eta^+}\displaystyle
            \int_{\Omega} \frac{a(x)}{r(x)}t_{\mu}^{r^--\eta^+}|w|^{r(x)}
            |\log|t_{\mu}w|\,|dx.
        \end{aligned}
        $$
        Therefore, using the continuity of $\rho$ and using the fact that $w$ does not depend on $\mu$,
        we get that $c_{\mu}\rightarrow0$ as
        $\mu\rightarrow \infty$.\\
        This completes the proof of the lemma.
    \end{proof}

    Now we are ready to prove the Palais-Smale condition for $J_\mu$.
    \begin{lemma}\label{palais}
        Let the hypotheses in Theorem \ref{mainthm-3} hold.
        Then, there is $\mu_{0}>0$ such that for any $\mu>\mu_{0}$, the
        functional $J_{\mu}$ satisfies the Palais-Smale condition at the
        level $c_{\mu}>0,$ where $c_\mu $ is defined in \eqref{cmu}.
    \end{lemma}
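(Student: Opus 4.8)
The plan is to run a Palais--Smale compactness argument based on the concentration--compactness principle underlying Lemma~\ref{critical}, combined with the asymptotic estimate of Lemma~\ref{mu}, so that for $\mu$ large the critical level $c_\mu$ is forced below the threshold at which compactness can fail. Fix $\mu>0$ and let $(v_n)_{n\ge1}\subset X$ satisfy $J_\mu(v_n)\to c_\mu$ and $\|J_\mu'(v_n)\|_{X^*}\to0$. First I would prove that $(v_n)$ is bounded in $X$, arguing as in Lemma~\ref{bounded} but testing with the multiplier $1/\sigma_0$, where $\sigma_0$ is the constant from $(H_{10})$; this particular choice is dictated by $(H_{10})$, since $2p_2^+<\sigma_0<\eta^-\le r^-<\alpha^-$. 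Indeed, in $J_\mu(v_n)-\tfrac1{\sigma_0}\langle J_\mu'(v_n),v_n\rangle$ every modular term carries a coefficient bounded below by $\tfrac12(\tfrac1{p_2^+}-\tfrac1{\sigma_0})>0$, the critical $b$--term and the $\eta$--term are nonnegative (because $\sigma_0<\alpha^-$ and $\sigma_0<\eta^-$, while $b,c\ge0$), and the logarithmic term is nonnegative on $\{|v_n|\ge1\}$ and bounded below by $-C\mu$ on $\{|v_n|<1\}$ via Lemma~\ref{log-in}$(i)$. For $\|v_n\|>1$, Proposition~\ref{norm-modular}$(iii)$ then gives $c_\mu+o_n(1)+o_n(1)\|v_n\|\ge c\|v_n\|^{p_1^-}-C\mu$, forcing boundedness. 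Up to a subsequence, $v_n\rightharpoonup v$ in $X$, $v_n\to v$ in $L^{\gamma(\cdot)}(\Om)$ for every $\gamma\in C_+(\overline\Om)$ with $\gamma<p^*_{2s_2}$, and $v_n\to v$ a.e., by Lemma~\ref{embd-X}.

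Next, since $\|v\|_W\le\|v\|$, the sequence is bounded in $W$, and I would apply the concentration--compactness principle of \cite{zuo,Ho-Conc} for the critical embedding $W\hookrightarrow L^{\alpha(\cdot)}(\Om)$: there exist an at most countable set $I$, points $\{x_j\}_{j\in I}\subset\overline\Om$ lying in the critical set $\{x:\alpha(x)=p^*_{2s_2}(x)\}$, and numbers $\mu_j,\nu_j>0$ such that $|v_n|^{\alpha(\cdot)}\overset{*}{\rightharpoonup}\nu\ge|v|^{\alpha(\cdot)}+\sum_{j\in I}\nu_j\delta_{x_j}$, the Gagliardo density of $v_n$ converges (up to a subsequence) to a measure $\ge\sum_{j\in I}\mu_j\delta_{x_j}$, and $S\,\nu_j^{1/\alpha(x_j)}\le\mu_j^{1/\overline p_2(x_j)}$ with $S$ from Lemma~\ref{critical}. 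For $j\in I$ I would test $\langle J_\mu'(v_n),\psi_\varepsilon v_n\rangle\to0$ against a standard cut-off $\psi_\varepsilon$ at $x_j$, let $n\to\infty$ and then $\varepsilon\to0$: the subcritical contributions vanish in this double limit (the logarithmic one handled via Vitali's theorem exactly as in the proof of Lemma~\ref{bounded}), leaving $\mu_j\le\|b\|_{L^\infty(\Om)}\nu_j$ by $(H_5')$. Together with $S\,\nu_j^{1/\alpha(x_j)}\le\mu_j^{1/\overline p_2(x_j)}$ and $\alpha(x_j)>\overline p_2(x_j)$, this forces $\nu_j\ge\kappa_0$ and $\mu_j\ge\kappa_1$ for constants $\kappa_0,\kappa_1>0$ depending only on $S$, $\|b\|_{L^\infty(\Om)}$, $N$ and the bounds on $p_2,s_2$, hence \emph{independent of $\mu$}.

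Now I would show that $I\neq\emptyset$ produces a $\mu$--independent lower bound for $c_\mu$, and this is exactly where $(H_{10})$ enters decisively. Writing $c_\mu=\lim_n\big(J_\mu(v_n)-\tfrac1{\sigma_0}\langle J_\mu'(v_n),v_n\rangle\big)$, the modular part is $\ge\tfrac12(\tfrac1{p_2^+}-\tfrac1{\sigma_0})\rho(v_n)$ and the $b$--term is nonnegative, while the remaining logarithmic-plus-$\eta$ contribution is nonnegative: on $\{|v_n|\ge1\}$ the logarithmic part is nonnegative, and on $\{|v_n|<1\}$ Lemma~\ref{log-in}$(i)$ with $\sigma=r(x)-\eta(x)$ gives $a(x)|v_n|^{r(x)}|\log|v_n||\le\tfrac{a(x)}{e(r^--\eta^+)}|v_n|^{\eta(x)}$, so the negative logarithmic part is absorbed by the positive $\eta$--term precisely under the relation between $a(\cdot)$ and $c(\cdot)$ imposed in $(H_{10})$. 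Hence, if some $j_0\in I$, taking a small ball around $x_{j_0}$ and using lower semicontinuity of mass, $\liminf_n\rho(v_n)\ge\mu_{j_0}\ge\kappa_1$, so $c_\mu\ge\tfrac12(\tfrac1{p_2^+}-\tfrac1{\sigma_0})\kappa_1=:\kappa>0$ with $\kappa$ independent of $\mu$. By Lemma~\ref{mu}, $c_\mu\to0$ as $\mu\to\infty$, so there is $\mu_0>0$ with $c_\mu<\kappa$ for all $\mu>\mu_0$; for such $\mu$ the set $I$ must be empty. Then $\nu=|v|^{\alpha(\cdot)}$ and, $\Om$ being bounded, $\int_\Om|v_n|^{\alpha(x)}dx\to\int_\Om|v|^{\alpha(x)}dx$, which with a.e.\ convergence and a Br\'ezis--Lieb type argument yields $v_n\to v$ strongly in $L^{\alpha(\cdot)}(\Om)$. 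Subtracting $\langle J_\mu'(v),v_n-v\rangle\to0$ from $\langle J_\mu'(v_n),v_n-v\rangle\to0$, the critical $b$--term vanishes by the strong $L^{\alpha(\cdot)}$ convergence, the logarithmic term as in \eqref{ll4}, and the $c$--term by compactness, so that $\langle\rho'(v_n)-\rho'(v),v_n-v\rangle\to0$; Lemma~\ref{s+}$(iii)$ then yields $v_n\to v$ in $X$, completing the proof with this choice of $\mu_0$.

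The main obstacle is the third step: one needs a lower bound on $c_\mu$ that does not deteriorate as $\mu\to\infty$, so it can be contrasted with Lemma~\ref{mu}. This is possible only because the $\mu$--dependent negative logarithmic contribution — which appears since $\log|u|<0$ for small $u$ — is dominated by the $\mu$--dependent positive $\eta$--term, and $(H_{10})$ is calibrated precisely to make this domination hold pointwise, with the constants $r^--\eta^+$ and $(\eta^--\sigma_0)r^+/(\eta^-(r^+-\sigma_0))$ appearing there coming out of Lemma~\ref{log-in}$(i)$ and the choice of multiplier $1/\sigma_0$ used throughout.
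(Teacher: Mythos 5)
Your proposal is correct in its overall logic and reaches the right conclusion, but the compactness mechanism you use is genuinely different from the paper's. You invoke the Lions-type concentration--compactness principle of \cite{Ho-Conc,zuo}: Dirac masses $\nu_j\delta_{x_j}$ located in the critical set, the reverse H\"older inequality at atoms, a cut-off testing argument giving $\mu_j\le\|b\|_{L^\infty(\Omega)}\nu_j$, and hence a $\mu$-independent lower bound $c_\mu\ge\kappa>0$ whenever an atom exists, to be contrasted with Lemma \ref{mu}. The paper instead avoids the measure-theoretic CCP entirely: it sets $\xi_\mu=\lim_n\|b^{1/\alpha(\cdot)}(v_n-v_\mu)\|_{L^{\alpha(\cdot)}(\Omega)}$, uses only Brezis--Lieb type splittings \eqref{BL} together with the weak continuity of the operators $\mathcal L_i$ and the Vitali argument \eqref{ps5} to derive the identity $\lim_n\rho(v_n-v_\mu)=\lim_n\int_\Omega b(x)|v_n-v_\mu|^{\alpha(x)}dx$ in \eqref{ps7}, and then feeds this into the global critical Sobolev inequality of Lemma \ref{critical} to obtain \eqref{ps8}, which forces either $\xi_\mu=0$ or a $\mu$-independent lower bound on $c_\mu$ via \eqref{ps4} --- again contradicting Lemma \ref{mu}. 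Your route gives localized information about where compactness would fail but requires the full CCP machinery (whose cut-off estimates you only sketch, though they are standard and available in \cite{zuo}); the paper's route is lighter, needing only Brezis--Lieb lemmas and the global embedding constant $S$. Both hinge on the same two pillars, which you identify correctly: the absorption of the negative logarithmic contribution by the $\eta$-term via $(H_{10})$ and Lemma \ref{log-in}$(i)$ with $\sigma=r(x)-\eta(x)$, and the decay $c_\mu\to0$ from Lemma \ref{mu}. Your boundedness step and the final $(S_+)$ argument coincide with the paper's, so I consider the proposal a valid alternative proof.
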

    \begin{proof}
        Fix $\mu>0$ and let $(v_n)_{n\geq1}$ be a $(PS)_{c_\mu}$ sequence of the functional $J_\mu$, that is satisfying \eqref{pss}. We first show that $(v_n)_{n\geq1}$ is bounded in $X$. We argue by contradiction. Then, up to a subsequence, still
        denoted by $(v_n)_{n\geq1}$, we have
        $\lim\limits_{n\to\infty}\|v_n\|=\infty$ and so, $\|v_n\|\geq 1$ for $n\in\mathbb N$ sufficiently large.
     Let $\sigma_0>0$ be a positive constant  defined
    as  in assumption $(H_{10})$. Then by \eqref{pss} there exists $d>0$ such that, using Lemma \ref{log-in} and $(H_{10})$ we deduce as $n\to\infty$
        \begin{align}\label{ps1}
            & o_n(1)+c_{\mu}+d\|v_n\|\nonumber\\
            &\geq J_\mu(v_n) -\frac{1}{\sigma_0} \langle J_\mu'(v_n),v_n\rangle\nonumber\\
            &\geq\left(\frac{1}{2p_2^+}-\frac{1}{\sigma_0}\right)\rho(v_n)+\left(\frac{1}{\sigma_0}-\frac{1}{\al^-}\right)\int_\Om b(x)|v_n|^{\al(x)}dx
            +\mu \left(\frac{1}{\sigma_0}-\frac{1}{\eta^-}\right)\int_\Om c(x)|v_n|^{\eta(x)}dx\nonumber\\
            &\qquad+\mu \int_{\Omega}\left(\frac{1}{\sigma_0}
            -\frac{1}{r(x)}\right) a(x) | v_n|^{r(x)}\log|v_n(x)| dx + \mu \int_{\Omega}\frac{a(x)}{r(x)^2}| v_n(x)|^{r(x)} dx\nonumber\\
            &\geq
            \left(\frac{1}{2p_2^+}-\frac{1}{\sigma_0}\right)\rho(v_n)+\mu
            \int_{\Omega\cap\{|v_n(x)|<1\}}\left(\frac{1}{\sigma_0}
            -\frac{1}{r(x)}\right) a(x) | v_n|^{r(x)}\log|v_n(x)| dx \nonumber\\
            &\qquad+\mu \left(\frac{1}{\sigma_0}-\frac{1}{\eta^-}\right)\int_{\Omega\cap\{|v_n(x)|<1\}} c(x)|v_n|^{\eta(x)}dx\nonumber\\
            &\geq\left(\frac{1}{2p_2^+}-\frac{1}{\sigma_0}\right)\|v_n\|^{p_1^-}-\frac{\mu}{e(r^--\eta^+)}\left(\frac{1}{\sigma_0}
            -\frac{1}{r^+}\right)\int_{\Omega\cap\{ |v_n(x)|<1\}}a(x) |v_n|^{\eta(x)}dx
            \nonumber\\
            &\qquad+\mu \left(\frac{1}{\sigma_0}-\frac{1}{\eta^-}\right)\int_{\Omega\cap\{|v_n(x)|<1\}} c(x)|v_n|^{\eta(x)}dx\nonumber\\
            &\geq\left(\frac{1}{2p_2^+}-\frac{1}{\sigma_0}\right)\|v_n\|^{p_1^-}
        \end{align}
        which gives the desired contradiction, since $1<p_1^-<2p^{+}_{2}<\sigma_0$ by $(H_{10})$. Hence,
        $(v_n)_{n\geq1}$ is bounded in $X$.\\

        Since $X$ is reflexive, by $(H_7)$, Lemmas \ref{embd-X} and \ref{critical},  there exists a
        subsequence, still denoted by $(v_n)_{n\geq1}$, and $v_\mu\in X$ such
        that
        \begin{align}\label{ps2}
            \begin{split}
                &v_{n}\rightharpoonup v_{\mu} \ \ \text{ in }X, \quad \rho(v_{n})\rightarrow\beta_{\mu},\\
                &v_{n}\rightharpoonup v_{\mu}\text{ in }L^{\alpha(\cdot)}(\Omega),\quad\|b^{\frac{1}{\alpha(\cdot)}}(v_{n}-v_{\mu})\|_{L^{\alpha(\cdot)}(\Om)}\rightarrow \xi_{\mu},\\
                &v_{n}\rightarrow v_{\mu}\text{ in }L^{\eta(\cdot)}(\Omega) \ \
                \text{ and } \quad
                v_{n}(x)\rightarrow v_{\mu}(x)\text{ a.e. ~~in }\Omega,
            \end{split}
        \end{align}
        as $n\to\infty$, for any $\gamma\in C_+(\overline{\Omega})$ with $1<\gamma(x)<p_{2_{s_2}}^*(x)$ for any $x\in\overline{\Omega}$.
        Evidently, if $\beta_\mu=0$, we have $v_n\to0$ in $X$. Hence, let us
        suppose $\beta_\mu>0$.

        By Brezis-Lieb type lemmas in \cite{Fu} and \cite{zuo}, considering also $(H_7)$, we have
        \begin{equation}
        \begin{aligned}\label{BL}
        \rho(v_n)-\rho(v_n-v_\mu)&=\rho(v_\mu)+o_n(1)\\
            \int_{\Omega}b(x)\left(|v_{n}|^{\alpha(x)}-|v_{n}-v_\mu|^{\alpha(x)}\right)dx&=\int_{\Omega}b(x)|v_\mu|^{\alpha(x)}dx+o_n(1),
        \end{aligned}
        \end{equation}
        as $n\to\infty$.
        From this, arguing similarly to \eqref{ps1}, we get
        \begin{align}\label{ps4}
            c_{\mu}+o(1) &\geq
            \left(\frac{1}{\sigma_0}-\frac{1}{\al^-}\right)\int_\Om
            b(x)|v_n|^{\al(x)}dx+\mu\left(\frac{1}{\sigma_0}-\frac{1}{\eta^- }\right)\int_\Om
            c(x)|v_{n}|^{\eta(x)}dx \nonumber \\
            &\qquad+\mu \int_{\Omega\cap\{  |v_n(x)|<1\}}\left(\frac{1}{\sigma_0}
            -\frac{1}{r(x)}\right) a(x) | v_n|^{r(x)}\log|v_n(x)| dx \nonumber\\
            &\geq \left(\frac{1}{\sigma_0}-\frac{1}{\al^-}\right)\int_\Om
            b(x)|v_n|^{\al(x)}dx+\mu\left(\frac{1}{\sigma_0}-\frac{1}{\eta^-
            }\right)\int_{\Om\cap\{|v_n(x)|<1 \}}c(x)|v_{n}|^{\eta(x)}dx
            \nonumber\\
            &\qquad-\frac{\mu}{e(r^--\eta^+)}\left(\frac{1}{\sigma_0}
            -\frac{1}{r^+}\right)
            \int_{\Omega\cap\{ |v_n(x)|<1\}}a(x) | v_n|^{\eta(x)}
            dx\nonumber\\
            &\geq \left(\frac{1}{\sigma_0}-\frac{1}{\al^-}\right)\int_\Om
            b(x)|v_n|^{\al(x)}dx\nonumber\\
            &=\left(\frac{1}{\sigma_0}-\frac{1}{\al^-}\right)\left[\int_\Om
            b(x)|v_n-v_\mu|^{\al(x)}dx+\int_\Om
            b(x)|v_\mu|^{\al(x)}dx\right]\nonumber\\
            &\geq\left(\frac{1}{\sigma_0}-\frac{1}{\al^-}\right)\int_\Om
            b(x)|v_n-v_\mu|^{\al(x)}dx
        \end{align}
        as $n\rightarrow \infty$.
        For simplicity, let us set the operator $\mathcal L_i:X\to X^*$ such that
$$
\begin{aligned}
\langle\mathcal L_i(u),\varphi\rangle:=&\,
\frac{1}{2}\iint_{\R^{2N}\setminus(\mathcal{C}\Omega)^2}\frac{|u(x)-u(y)|^{p_i(x,y)-2}(u(x)-u(y))(\varphi(x)-\varphi(y))}{|x-y|^{N+s_i(x,y)p_i(x,y)}}dxdy\\
            &+ \int_{\Omega}|u|^{\overline{p}_i(x)-2}u\varphi dx
            +\int_{\mathcal{C}\Omega} \beta(x)|u|^{\overline{p}_i(x)-2}u \varphi dx,
\end{aligned}
$$
for any $u$, $\varphi\in X$ and for $i\in\{1,2\}$. Then for
$i\in\{1,2\}$, by \eqref{ps2} the sequence $(\mathcal
V^i_n)_{n\ge1}$, defined in $\mathbb R^{2N}\setminus(\mathcal
C\Omega)^2$ by
\begin{equation*}
(x,y)\mapsto\mathcal V^i_n(x,y)=\frac{|v_n(x)-v_n(y)|^{p_i(x,y)-2}[v_n(x)-v_n(y)]}{|x-y|^{(N+s_i(x,y)p_i(x,y))/p_i'(x,y)}},
\end{equation*}
is bounded in $L^{p_i'(x,y)}(\mathbb R^{2N})$, with $p_i'(x,y)=\frac {p_i(x,y)}{p_i(x,y)-1}$ as well as $\mathcal V^i_n(x,y)\to\mathcal V^i_\mu(x,y)$ point-wise a.e. in $\mathbb R^{2N}$, where
\begin{equation*}
\mathcal V^i_\mu(x,y)
=\frac{|v_\mu(x)-v_\mu(y)|^{p_i(x,y)-2}
[v_\mu(x)-v_\mu(y)]}{|x-y|^{(N+s_i(x,y)p_i(x,y))/p_i'(x,y)}}.
\end{equation*}
Thus, going if necessary to a further subsequence, we get that $\mathcal V^i_n\rightharpoonup
\mathcal V^i_\mu$ in $L^{p_i'(x,y)}(\mathbb R^{2N})$ as $n\to\infty$.
Similarly, we have $|v_n|^{\overline{p}_i(x)-2}v_n\rightharpoonup |v_\mu|^{\overline{p}_i(x)-2}v_\mu$ in $L^{\overline{p}_i'(x)}(\Omega)$ and
$|v_n|^{\overline{p}_i(x)-2}v_n\rightharpoonup |v_\mu|^{\overline{p}_i(x)-2}v_\mu$ in $L^{\overline{p}_i'(x)}(\mathcal C\Omega)$, as $n\to\infty$. Thus, we obtain that
\begin{equation}\label{serve}
\langle\mathcal L_i(v_n),\varphi\rangle\to\langle\mathcal L_i(v_\mu),\varphi\rangle\quad\mbox{as }n\to\infty,
\end{equation}
for any $\varphi\in X$.

        Now, let $\sigma\in\left(0,(p_{2_{s_2}}^*)^--r^+\right)$. By Lemmas \ref{embd-X} and \ref{log-in} and using H\"older inequality, we obtain
        \begin{align}\label{ps5}
            &\int_{\Omega} a(x) |v_n|^{r(x)-1} |\log (|v_n|) |v_n-v_{\mu}||dx\\
            &= \int_{\Omega\cap\{ |v_n|<1\}} a(x) |v_n|^{r(x)-1} |\log|
            (|v_n|) |v_n-v_{\mu}|dx +\int_{\Omega\cap\{ |v_n|>1\}}
            a(x) |v_n|^{r(x)-1} \log
            (|v_n|) |v_n-v_{\mu}|dx \nonumber\\
            &\leq \frac{\|a\|_{L^\infty(\Om)}}{e(r^--1)} \|v_n-v_{\mu}\|_{L^1(\Omega)}+
            \frac{\|a\|_{L^\infty(\Om)}}{e\sigma}\int_{\Omega}
            |v_n|^{r^++\sigma-1} |v_n-v_{\mu}|dx \nonumber\\
            & \leq \frac{\|a\|_{L^\infty(\Om)}}{e(r^--1)} \|v_n-v_{\mu}\|_{L^1(\Omega)}+ \frac{\|a\|_{L^\infty(\Om)}}{e\sigma}
            \||v_n|^{r^++\sigma-1}\|_{L^{\frac{r^++\sigma}{r^++\sigma-1}}(\Om)}\|v_n-v_{\mu}\|_{L^{r^++\sigma}(\Om)} \nonumber\\
            & \rightarrow 0 \ \ \mbox{as} \ \ n \rightarrow \infty. \nonumber
        \end{align}
        Thus, by \eqref{ps2}, \eqref{serve} and \eqref{ps5} we have as $n\rightarrow
        \infty$
        \begin{align*}
            o(1)&= \langle J'_{\mu}(v_{n}), v_{n}-v_{\mu}\rangle\\
            &=\langle\mathcal L_1(v_n),(v_n-v_\mu)\rangle+\langle\mathcal L_2(v_n),(v_n-v_\mu)\rangle- \int_{\Omega}
            b(x)|v_n|^{\alpha(x)-2}v_{n}(v_n-v_{\mu})dx \nonumber\\
            &= \rho(v_n)-\rho(v_{\mu})- \int_{\Omega} b(x)\left(
            |v_n|^{\alpha(x)}-|v_{\mu}|^{\alpha(x)}\right)dx+o(1) \nonumber.
        \end{align*}
        From this and by \eqref{BL}, we get
        \begin{equation}\label{ps7}
            \lim_{n\rightarrow \infty}\rho(v_n-v_{\mu})= \lim_{n
                \rightarrow \infty} \int_{\Omega} b(x)|v_n-v_{\mu}|^{\alpha(x)}dx.
        \end{equation}
        Then by Proposition \ref{norm-modular}, Lemma \ref{critical}, \eqref{ps7} and $(H_7)$, we infer that
        \begin{equation}\label{ps8}
            \max\{\xi_\mu^{\alpha^+},\xi_\mu^{\alpha^-}\}\geq
            \min\{(C_{b,S}\xi_\mu)^{p_{2}^+},(C_{b,S}\xi_\mu)^{p_{1}^-}\},
        \end{equation}
        where $C_{b,S}=S^{-1}\max\left\{\|b\|_{L^\infty(\Omega)}^{1/\alpha^+},\|b\|_{L^\infty(\Omega)}^{1/\alpha^-}\right\}>0$.
        By combining \eqref{ps4} and \eqref{ps8}, we can deduce that
        $$c_{\mu}\geq \left(\frac{1}{\sigma}-\frac{1}{\alpha^{-}}\right)\times
        \begin{cases}
            C_{b,S}^{\frac{\alpha^{+}p_{2}^{+}}{\alpha^{-}-p_{2}^{+}}}, \ \ \mbox{if}
            \ \ \xi_\mu \in
            (0,1) \ \ \mbox{and} \ \ C_{b,S}\xi_\mu \in (0,1)\\
            C_{b,S}^{\frac{\alpha^{-}p_{1}^{-}}{\alpha^{+}-p_{1}^{-}}}, \ \ \mbox{if} \ \ \xi_\mu \in
            (1,\infty) \ \ \mbox{and} \ \ C_{b,S}\xi_\mu \in (1,\infty)\\
            C_{b,S}^{\frac{\alpha^{-}p_{2}^{+}}{\alpha^{+}-p_{2}^{+}}}, \ \ \mbox{if}
            \ \ \xi_\mu \in
            (1,\infty) \ \ \mbox{and} \ \ C_{b,S}\xi_\mu \in (0,1)\\
            C_{b,S}^{\frac{\alpha^{+}p_{1}^{-}}{\alpha^{-}-p_{1}^{-}}}, \ \ \mbox{if}
            \ \ \xi_\mu \in (0,1) \ \ \mbox{and} \ \ C_{b,S}\xi_\mu \in (1,\infty),
        \end{cases}
        $$
        which is impossible by Lemma \ref{mu}. Hence $\xi_\mu=0$. Therefore
        by \eqref{ps7} we conclude that $v_n \rightarrow v_\mu$ in $X$ as $n\to\infty$.\\
        This completes the proof.
    \end{proof}

    \noindent
    \textbf{Proof of Theorem \ref{mainthm-3}:} By Lemmas
    \ref{geo2} and \ref{palais}, we can conclude that there is $\mu_0>0$
    such that, for every $\mu>\mu_0$, the functional $J_\mu$ fulfills
    all conditions of the classical mountain Pass theorem. Then, for any
    $\mu>\mu_0$, problem \eqref{eq1} admits a weak solution $v_\mu\in
    X$. Evidently, $v_\mu \neq 0$ due to the fact that
    $J_\mu(v_\mu)=c_{\mu}>0.$

We just have to verify the asymptotic behavior \eqref{asym}. Thanks to Proposition \ref{norm-modular}, it is equivalent to prove that $\lim\limits_{\mu\to\infty}\rho(v_\mu)=0$. Suppose by contradiction that
$\limsup\limits_{\mu\to\infty}\rho(v_\mu)=\upsilon>0$. Hence there is a sequence $n\mapsto \mu_n\uparrow\infty$ such that $\rho(v_{\mu_n})\to \upsilon$ as $n\to\infty$. Then, arguing as in \eqref{ps1} we get
$$
            c_{\mu_n}\geq
            \left(\frac{1}{2p_2^+}-\frac{1}{\sigma_0}\right)\rho(v_{\mu_n}).
$$
By sending $n\to\infty$ and considering Lemma \ref{mu}, we conclude
$$
0\geq\left(\frac{1}{2p_2^+}-\frac{1}{\sigma_0}\right)\upsilon>0
$$
which gives the desired contradiction.
    This ends the proof of our main result. \\

    \textbf{Concluding remarks, perspectives, and some open questions}\\
    \begin{itemize}
        \item The methods developed in this paper can be extended to more
        general variational integrals. We mainly refer to energy functionals
        associated to fractional Orlicz operators of the type $(-\Delta
        )_{g}^{s}$, which extend the standard fractional $p$-Laplacian and
        different to the fractional $p(\cdot)$-Laplacian. These operators have
        been introduced by A.M. Salort et al. \cite{salort}.
        \item Condition $(H_{10})$ played a key role in the proof of the
        existence of solution for the critical problem \eqref{eq1}. In
        particular, the main aim of the presence of the term $c(x)|u|^{\eta(x)-2}u$
        is to control the negative part of the logarithmic term. Thus, it is
        a natural question to study what happens if $c(x)=0$.
        \item We believe that a valuable research direction is to
        generalize the abstract approach developed in this paper to the whole space $\mathbb{R}^{N}$.
        \item In a forthcoming paper, we will be interested in the study of
        new classes of nonlinear boundary value logarithmic problems involving the
        double phase operator of type  as in \cite{Liu}.
    \end{itemize}

    \section*{Acknowledgments}

A. Fiscella is member of {\em Gruppo Nazionale per l'Analisi Ma\-te\-ma\-ti\-ca, la Probabilit\`a e le loro Applicazioni} (GNAMPA)
of the {\em Istituto Nazionale di Alta Matematica} (INdAM).
A. Fiscella is partially supported by INdAM-GNAMPA project titled {\em Equazioni alle derivate parziali: problemi e modelli} and by the FAPESP Thematic Project titled {\em Systems and partial differential equations} (2019/02512-5).

\end{document}